\title{Degree powers in graphs with a forbidden forest}
\author{Yongxin Lan$^1$, Henry Liu$^2$, Zhongmei Qin$^3$, Yongtang Shi$^1$\footnote{Corresponding author}\\
\\
\normalsize $^1$Center for Combinatorics and LPMC\\
\normalsize Nankai University, Tianjin 300071, China\\
\normalsize yxlan0@126.com, shi@nankai.edu.cn\\
\\
\normalsize $^2$School of Mathematics\\
\normalsize Sun Yat-sen University, Guangzhou 510275, China\\
\normalsize liaozhx5@mail.sysu.edu.cn\\
\\
\normalsize $^3$College of Science\\
\normalsize Chang'an University, Xi'an, Shaanxi 710064, China\\
\normalsize qinzhongmei90@163.com\\
}
\date{6 January 2018}
\newdimen\unit\newdimen\psep\newcount\nd\newcount\ndx\newbox\dotb\newbox\ptbox
\newdimen\dx\newdimen\dy\newdimen\dxx\newdimen\dyy\newdimen\hgt
\newdimen\xoff\newdimen\yoff
\newcommand\clap[1]{\hbox to 0pt{\hss{#1}\hss}}
\newcommand\vdisk[1]{{\font\dotf=cmr10 scaled #1\dotf.}}
\newcommand\varline[2]{\setbox\dotb\hbox{\vdisk{#1}}\xoff=-.5\wd\dotb
\wd\dotb=0pt\yoff=-.5\ht\dotb\psep=#2\ht\dotb}
\newcommand\varpt[1]{\setbox\ptbox\clap{\vdisk{#1}}\setbox\ptbox
\hbox{\raise-.5\ht\ptbox\box\ptbox}}
\newcommand\cpt{\copy\ptbox}
\newcommand\point[3]{\rlap{\kern#1\unit\raise#2\unit\hbox{#3}}}
\newcommand\setnd[4]{\dx=#3\unit\advance\dx-#1\unit\divide\dx by\psep
\dy=#4\unit\advance\dy-#2\unit\divide\dy by\psep \multiply\dx
by\dx\multiply\dy by\dy\advance\dx\dy\nd=1\advance\dx-1sp
\loop\ifnum\dx>0\advance\dx-\nd sp\advance\nd1\advance\dx-\nd
sp\repeat}
\newcommand\dl[4]{{\setnd{#1}{#2}{#3}{#4}\dline{#1}{#2}{#3}{#4}\nd}}
\newcommand\dline[5]{{\nd=#5\hgt=#2\unit\dx=#3\unit\advance\dx-#1\unit
\divide\dx by\nd\dy=#4\unit\advance\dy-#2\unit\divide\dy by\nd
\advance\hgt\yoff\rlap{\kern#1\unit\kern\xoff\loop\ifnum\nd>1\advance\nd-1
\advance\hgt\dy\kern\dx\raise\hgt\copy\dotb\repeat}}}
\newcommand\ellipse[4]{\qellip{#1}{#2}{#3}{#4}\qellip{#1}{#2}{#3}{-#4}%
\qellip{#1}{#2}{-#3}{#4}\qellip{#1}{#2}{-#3}{-#4}}
\newcommand\qellip[4]{{\setnd{0}{0}{#3}{#4}\dx=\unit\dy=0pt\raise\yoff\rlap{%
\kern#1\unit\kern\xoff\raise#2\unit\hbox{\loop\ifnum\dx>0\rlap{\kern#3\dx
\raise#4\dy\copy\dotb}\hgt=\dx\divide\hgt
by\nd\advance\dy\hgt\hgt=\dy \divide\hgt
by\nd\advance\dx-\hgt\repeat\rlap{\raise#4\dy\copy\dotb}}}}}
\newcommand\bez[6]{{\setnd{#1}{#2}{#3}{#4}\ndx=\nd\setnd{#3}{#4}{#5}{#6}
\ifnum\ndx>\nd\nd=\ndx\fi\dx=#3\unit\advance\dx-#1\unit\dy=#4\unit
\advance\dy-#2\unit\dxx=#5\unit\advance\dxx-#1\unit\dyy=#6\unit\advance
\dyy-#2\unit\advance\dxx-2\dx\advance\dyy-2\dy\divide\dxx
by\nd\divide\dyy
by\nd\advance\dx.25\dxx\advance\dy.25\dyy\divide\dx
by\nd\divide\dy by\nd \multiply\nd
by2\dx=100\dx\dy=100\dy\dxx=100\dxx\dyy=100\dyy\divide\dxx by\nd
\divide\dyy
by\nd\hgt=#2\unit\raise\yoff\rlap{\kern#1\unit\kern\xoff
\raise\hgt\copy\dotb\loop\ifnum\nd>0\advance\nd-1\advance\hgt0.01\dy
\kern0.01\dx\raise\hgt\copy\dotb\advance\dx\dxx\advance\dy\dyy\repeat}}}
\newcommand\ptu[3]{\point{#1}{#2}{\cpt\raise1ex\clap{$\scriptstyle{#3}$}}}
\newcommand\ptd[3]{\point{#1}{#2}{\cpt\raise-1.8ex\clap{$\scriptstyle{#3}$}}}
\newcommand\ptr[3]{\point{#1}{#2}{\cpt\raise-.4ex\rlap{$\ \scriptstyle{#3}$}}}
\newcommand\ptl[3]{\point{#1}{#2}{\cpt\raise-.4ex\llap{$\scriptstyle{#3}\ $}}}
\newcommand\ptlu[3]{\point{#1}{#2}{\raise.8ex\clap{$\scriptstyle{#3}$}}}
\newcommand\ptld[3]{\point{#1}{#2}{\raise-1.6ex\clap{$\scriptstyle{#3}$}}}
\newcommand\ptlr[3]{\point{#1}{#2}{\raise-.4ex\rlap{$\,\scriptstyle{#3}$}}}
\newcommand\ptll[3]{\point{#1}{#2}{\raise-.4ex\llap{$\scriptstyle{#3}\,$}}}
\newcommand\pt[2]{\point{#1}{#2}{\cpt}}
\newcommand\thnline{\varline{400}{.6}}
\newtheorem{theorem}                   {Theorem}
\newtheorem{thm}             [theorem] {Theorem}
\newtheorem{lemma}           [theorem] {Lemma}
\newtheorem{lem}             [theorem] {Lemma}
\newtheorem{cor}             [theorem] {Corollary}
\newtheorem{prop}             [theorem] {Proposition}
\newtheorem{claim}           [theorem] {Claim}
\newtheorem{conj}      [theorem] {Conjecture}
\newtheorem*{clm1'}{Claim 1$'$}
\newtheorem*{clm2'}{Claim 2$'$}
\newtheorem*{clm3'}{Claim 3$'$}
\newtheorem*{clm4'}{Claim 4$'$}
\def\eps{\varepsilon}
\def\ex{\textup{ex}}
\begin{document}
\maketitle

\begin{abstract}
\noindent Given a positive integer $p$ and a graph $G$ with degree sequence $d_1,\ldots,d_n$, we define $e_p(G)=\sum_{i=1}^n d_i^p$. Caro and Yuster introduced a Tur\'an-type problem for $e_p(G)$: Given a positive 
integer $p$ and a graph $H$, determine the function $\textup{ex}_p(n,H)$, which is the maximum value of $e_p(G)$ taken over all graphs $G$ on $n$ vertices that do not contain $H$ as a subgraph. Clearly, $\textup{ex}_1(n,H)=2\textup{ex}(n,H)$, where $\textup{ex}(n,H)$ denotes the classical Tur\'an number. Caro and Yuster determined the function $\textup{ex}_p(n, P_\ell)$ for sufficiently large $n$, where $p\geq 2$ and $P_\ell$ denotes the path on $\ell$ vertices. In this paper, we generalise this result and determine $\textup{ex}_p(n,F)$ for  sufficiently large $n$, where $p\geq 2$ and $F$ is a linear forest. We also determine $\textup{ex}_p(n,S)$, where $S$ is a star forest; and $\textup{ex}_p(n,B)$, where $B$ is a broom graph with diameter at most six.
\\[2mm]
\textbf{Keywords:} degree power; Tur\'an-type problem; $H$-free; forest\\
[2mm] \textbf{AMS Subject Classification (2010):} 05C07, 05C35
\end{abstract}

\section{Introduction}
For standard graph-theoretic notation and terminology, the reader is referred to \cite{B98}. All graphs considered here are finite, undirected, and have no loops or multiple edges. Let $G$ and $H$ be two graphs. The degree of a vertex $v\in V(G)$ and the maximum degree of $G$ are denoted by $d_G(v)$ and $\Delta(G)$. We use $G\cup H$ to denote the disjoint union of $G$ and $H$, and $G +H$ for the \emph{join} of $G$ and $H$, i.e., the graph obtained from $G \cup H$ by adding all edges between $G$ and $H$. Let $kG$ denote $k$ vertex-disjoint copies of $G$. For $U\subset V(G)$, let $G[U]$ denote the subgraph of $G$ induced by $U$. Let $K_t, E_t$ and $P_t$ denote the complete graph, the empty graph, and the path on $t$ vertices, respectively. Let $S_r$ denote the star with maximum degree $r$. Let $M_t$ be the graph on $t$ vertices with a maximum matching (i.e., $\lfloor \frac{t}{2}\rfloor$ independent edges). 

Given a graph $H$, we say that a graph $G$ is \emph{$H$-free} if $G$ does not contain $H$ as a subgraph. The classical \emph{Tur\'{a}n number}, denote by $\textup{ex}(n,H)$, is the maximum number of edges in a $H$-free graph on $n$ vertices. Tur\'an's classical result \cite{T41} states that $\textup{ex}(n,K_{r+1})=e(T_r(n))$ for $n\ge r\ge 2$, where $T_r(n)$ denotes the \emph{$r$-partite Tur\'{a}n graph} on $n$ vertices. Given a graph $G$ whose degree sequence is $d_1,\ldots,d_n$, and a positive integer $p$, let $e_p(G)=\sum_{i=1}^n {d_i^p}$. Caro and Yuster \cite{CY00} introduced a Tur\'an-type problem for $e_p(G)$: Determine the function $\textup{ex}_p(n,H)$, which is the maximum value of $e_p(G)$ taken over all $H$-free graphs $G$ on $n$ vertices. Moreover, characterise the \emph{extremal graphs}, i.e., the $H$-free graphs $G$ on $n$ vertices with $e_p(G)=\textup{ex}_p(n,H)$. Clearly, we have $\textup{ex}_1(n,H)=2\textup{ex}(n,H)$. 

This Tur\'an-type problem has attracted significant interest from many researchers. Caro and Yuster \cite{CY00} proved that $\textup{ex}_p(n,K_{r+1})=e_p(T_r(n))$ for $p=1,2,3$. The same result does not hold if $r$ is fixed, and $p$ and $n$ are sufficiently large. For example, if $G$ is the complete bipartite graph with class sizes $\lfloor\frac{n}{2}\rfloor-1$ and $\lceil\frac{n}{2}\rceil+1$, then we have $e_4(G)>e_4(T_2(n))$. Hence, we see that the parameter $p$ does play a role in the value of $\textup{ex}_p(n,K_{r+1})$ and the extremal graphs. Bollob\'as and Nikiforov further studied the function $\textup{ex}_p(n,K_{r + 1})$, where they allowed $p>0$ to be real. In \cite{BN04}, they proved that for $n$ sufficiently large, $\textup{ex}_p(n, K_{r+1})=e_p(T_r(n))$ for $0<p<r$, and $\textup{ex}_p(n, K_{r+1})>(1+\eps)e_p(T_r(n))$ for $p\ge r+\lceil\sqrt{2r}\rceil$ and some $\eps=\eps(r)>0$. In \cite{BN12}, they proved a result which gives an extension of the Erd\H{o}s-Stone Theorem by using $e_p(G)$ instead of the number of edges.

When considering cycles as the forbidden subgraphs, Caro and Yuster \cite{CY00} proved that $\textup{ex}_2(n,\mathcal C)=e_2(F_n)$ for sufficiently large $n$, where $\mathcal C$ denotes the family of cycles with even length (notice the
natural extension of the definition of $\ex_p$ to families of graphs), and $F_n$ is the \emph{friendship graph} on $n$ vertices, i.e., $F_n$ is obtained by taking a star on $n$ vertices and adding a maximum matching on the set of leaves. They also showed that $F_n$ is the unique extremal graph, and remarked that the same result also holds for $p>2$. Nikiforov \cite{N09} proved that $\textup{ex}_p(n,C_{2k+2}) = (1 + o(1))kn^p$, where $C_t$ denotes the cycle of order $t$, and this settled a conjecture of Caro and Yuster. Gu et al.~\cite{GLS15} proved that for $p\ge 1$, there exists a constant $c = c(p)$ such that the following holds: If $\textup{ex}_p(n,C_5) = e_p(G)$ for some $C_5$-free graph $G$ of order $n$, then $G$ is a complete bipartite graph with class sizes $cn + o(n)$ and $(1 - c)n + o(n)$.


A \emph{linear forest} (resp.~\emph{star forest}) is a forest whose connected components are paths (resp.~stars). There are many known results about the function $\textup{ex}_p(n,F)$ where $F$ is a linear forest. For the case of the classical Tur\'an number $\textup{ex}(n,F)$, one of the earliest results is the case when $F=P_\ell$ is a path. Erd\H{o}s and Gallai \cite{EG59} proved in 1959 that $\textup{ex}(n,P_\ell)\leq (\frac{\ell}{2}-1)n$ for $\ell\ge 2$, and if $\ell-1$ divides $n$, then equality holds only for the graph with vertex-disjoint copies of $K_{\ell-1}$. Motivated by this result, Erd\H{o}s and S\'os \cite{E64} in 1963 made the conjecture that the same result holds for any tree, i.e., if $T$ is a tree on $t\ge 2$ vertices, then we have $\textup{ex}(n,T)\leq (\frac{t}{2}-1)n$. This long-standing conjecture remains open, and many partial results are known. 
The result of Erd\H{o}s and Gallai was also sharpened by Faudree and Schelp \cite{FS75}, when they determined the function $\textup{ex}(n,P_\ell)$ exactly as well as the extremal graphs. When $F$ has more components, Erd\H{o}s and Gallai \cite{EG59} also proved that $\textup{ex}(n,kP_2)={k-1\choose 2}+(k-1)(n-k+1)$ for $k\ge 2$ and sufficiently large $n$, where the unique extremal graph is $K_{k-1}+E_{n-k+1}$. Very recently, this result was extended by Bushaw and Kettle \cite{BK11}, who determined the function $\textup{ex}(n,kP_\ell)$ for $k\ge 2$, $\ell\ge 3$ and sufficiently large $n$. Their result was further generalised by Lidick\'y et al.~\cite{LLP13}, who determined the function $\textup{ex}(n,F)$ for an arbitrary linear forest $F$ and sufficiently large $n$. In these two results, the extremal graph is unique. Lidick\'y et al.~\cite{LLP13} also determined the function $\textup{ex}(n,S)$ for an arbitrary star forest $S$ and sufficiently large $n$, and characterised the extremal graphs.

On the other hand, Caro and Yuster \cite{CY00} determined the function $\textup{ex}_p(n, P_\ell)$ for $p\geq 2$, $\ell\ge 3$ and sufficiently large $n$. The extremal graph is again unique, and is significantly different to the extremal graphs of $\textup{ex}(n,P_\ell)$ obtained by Faudree and Schelp \cite{FS75}. They also determined the functions $\textup{ex}_p(n,S_r)$ and $\textup{ex}_p(n,S_r^\ast)$, and their extremal graphs, where $S_r^\ast$ is the graph obtained by attaching a pendent edge at a leaf of $S_r$. 

This paper will be organised as follows. In Section \ref{surveysect}, we will state precisely the previously known results about the function $\textup{ex}_p(n,F)$, for various forests $F$. 
In Sections \ref{linstarforestsect} and \ref{broomsect}, we will determine the function $\ex_p(n,F)$ when $F$ is a linear forest, a star forest, and a broom with diameter at most $6$ (A \emph{broom} is a path with a star attached at one end). Our results can be regarded as extensions to many of these previously known results from \cite{BK11,CY00,EG59,LLP13}. Unless otherwise stated, we assume that $n$ is always sufficiently large, and we will make no serious attempt to minimise the lower bound on $n$. Without going into details, we remark that every large lower bound on $n$ depends only on the forest $F$, and not the parameter $p$.

\section{Known results}\label{surveysect}


In this section, we will review many of the known results about the function $\textup{ex}_p(n,F)$, for various forests $F$. Some of these results will also be helpful for us to present our results in Sections \ref{linstarforestsect} and \ref{broomsect}. First, we collect the results where $F$ is a single component. When $F$ is a path, Caro and Yuster \cite{CY00} observed that for $p\ge 1$, we have
\begin{equation}\label{CYobs1}
\textup{ex}_p(n,P_2) = 0,\quad \textup{and} \quad
\textup{ex}_{p}(n,P_{3})=
\left\{ 
\begin{array}{ll}
n-1 & \textrm{if $n$ is odd,}\\
n& \textrm{if $n$ is even.}
\end{array} 
\right.
\end{equation}
Moreover, the unique extremal graph for $\textup{ex}_{p}(n,P_{3})$ is $M_n$, the graph on $n$ vertices with a maximum matching. For $F=P_\ell$, Erd\H{o}s and Gallai \cite{EG59} proved the following result, as we have mentioned in the introduction.

\begin{thm}\label{EGthm1}\,\textup{\cite{EG59}}
For $\ell\ge 2$, we have $\textup{ex}(n, P_\ell) \le (\frac{\ell}{2}-1)n$. Moreover, if $\ell-1$ divides $n$, then equality holds only for the graph with vertex-disjoint copies of $K_{\ell-1}$.
\end{thm}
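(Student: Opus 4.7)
The plan is induction on $n$, proving the inequality and the characterisation of extremal graphs simultaneously. For the base case $n \le \ell - 1$, the inequality $\binom{n}{2} \le \bigl(\frac{\ell}{2}-1\bigr) n$ (which is equivalent to $n \le \ell-1$) together with the trivial bound $e(G) \le \binom{n}{2}$ yields the desired estimate, and equality forces $n = \ell-1$ and $G = K_{\ell-1}$.

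For the inductive step with $n \ge \ell$, I split on the connectedness of $G$. If $G$ is disconnected, I apply the inductive hypothesis to each component; the inequality then follows by summation, and equality forces every component to be extremal, hence (by induction) a disjoint union of copies of $K_{\ell-1}$. If $G$ is connected, the key input is the standard Dirac-type lemma that a connected graph on $n$ vertices with minimum degree $\delta$ contains a path on $\min(2\delta+1, n)$ vertices. Since $G$ is $P_\ell$-free and $n \ge \ell$, this forces $\delta(G) \le \lfloor \ell/2 \rfloor - 1 \le \frac{\ell}{2}-1$. Picking a vertex $v$ of minimum degree and applying induction to $G - v$ gives
\[
e(G) = e(G-v) + d_G(v) \le \Bigl(\frac{\ell}{2}-1\Bigr)(n-1) + \Bigl(\frac{\ell}{2}-1\Bigr) = \Bigl(\frac{\ell}{2}-1\Bigr)n,
\]
which establishes the inequality.

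For uniqueness in the connected case with $n \ge \ell$, equality above forces $d_G(v) = \frac{\ell}{2}-1$ (so $\ell$ must be even, since $d_G(v)$ is an integer) and $e(G-v) = \bigl(\frac{\ell}{2}-1\bigr)(n-1)$. Induction applied to $G-v$ then yields that $G-v$ is a disjoint union of copies of $K_{\ell-1}$. If $n = \ell$, so that $G - v = K_{\ell-1}$, then $v$ has a neighbour in this clique and we immediately obtain a Hamiltonian path of $G$, a $P_\ell$-contradiction. Otherwise $G - v$ has at least two $K_{\ell-1}$-components, and the connectedness of $G$ forces $v$ to have neighbours in at least two of them; splicing Hamiltonian paths of two such components through $v$ produces a path on $2(\ell-1)+1 \ge \ell$ vertices, again a $P_\ell$-contradiction. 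Hence no connected extremal graph exists for $n \ge \ell$, and combined with the disconnected case the characterisation follows.

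The main obstacle is this structural uniqueness step: the inequality itself drops out of a clean two-case induction, but ruling out connected extremal graphs requires combining the rigid structure imposed on $G-v$ by the inductive hypothesis with an explicit path-splicing argument. The parity of $\ell$ also plays a role — for $\ell$ odd the Dirac-type bound tightens to $\delta(G) \le \frac{\ell-3}{2} < \frac{\ell}{2}-1$, giving strict inequality in the degree-deletion step automatically, so the splicing argument is only truly needed when $\ell$ is even.
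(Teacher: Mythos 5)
The paper does not prove this statement at all --- it is quoted verbatim from Erd\H{o}s and Gallai \cite{EG59} as a known classical result --- so there is no internal proof to compare against. Judged on its own, your argument is correct and is essentially the standard modern proof of the Erd\H{o}s--Gallai bound: induction on $n$, with the disconnected case handled componentwise and the connected case handled by deleting a minimum-degree vertex, the degree bound $\delta(G)\le\lfloor\ell/2\rfloor-1$ coming from the classical lemma that a connected graph contains a path on $\min(2\delta+1,n)$ vertices (provable by the usual longest-path rotation argument, so it is a legitimate black box). Two points are worth making explicit in a final write-up. First, your induction hypothesis must be the \emph{strengthened} characterisation ``equality forces $G$ to be a disjoint union of copies of $K_{\ell-1}$'' with no divisibility assumption on $n$ (so that, in the disconnected case, components whose orders are not divisible by $\ell-1$ are automatically ruled out from attaining equality); your base case and connected-case analysis do deliver exactly this, but the statement being inducted on should be spelled out. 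Second, in the connected equality case the identity $e(G)=e(G-v)+d_G(v)$ forces $d_G(v)=\frac{\ell}{2}-1$, and your parity observation correctly disposes of odd $\ell$; for even $\ell$ the splicing of Hamiltonian paths of the $K_{\ell-1}$-components of $G-v$ through $v$ (using that connectivity forces $v$ to meet every component) cleanly produces the forbidden $P_\ell$. No gaps.
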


Inspired by Theorem \ref{EGthm1}, Erd\H{o}s and S\'os \cite{E64} made the conjecture that the same result holds for any tree: If $T$ is a tree on $t\ge 2$ vertices, then $\textup{ex}(n,T)\leq (\frac{t}{2}-1)n$. This long-standing conjecture remains open, and many partial results are known. Theorem \ref{EGthm1} was subsequently sharpened by Faudree and Schelp \cite{FS75}, when they managed to determine $\textup{ex}(n,P_\ell)$ exactly, as well as all the extremal graphs.

\begin{thm}\label{FSthm}\,\textup{\cite{FS75}}
Let $\ell\ge 2$ and $n=a(\ell-1)+b$, where $a\ge 0$ and $0\le b<\ell-1$. We have
\[
\textup{ex}(n, P_\ell)=a{\ell-1 \choose 2}+{b\choose 2}.
\]
Moreover, the extremal graphs are: 
\begin{itemize}
\item $aK_{\ell-1}\cup K_b$, 
\item $a'K_{\ell-1}\cup \big(K_{\ell/2-1}+E_{\ell/2+(a-a'-1)(\ell-1)+b}\big)$, where $\ell$ is even, $a>0$, $b=\frac{\ell}{2}$ or $b=\frac{\ell}{2}-1$ and $0\le a'<a$.
\end{itemize}
\end{thm}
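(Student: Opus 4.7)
For the lower bound, the plan is to verify each construction is $P_\ell$-free by bounding the longest path in every component, then check the edge counts. In $aK_{\ell-1}\cup K_b$ every component is a clique of order at most $\ell-1$. In the second construction, the one nontrivial component $K_{\ell/2-1}+E_m$ with $m=\ell/2+(a-a'-1)(\ell-1)+b$ contains no $P_\ell$ since any path must alternate between the two sides (the larger side being independent), and so uses at most $2(\ell/2-1)+1=\ell-1$ vertices. The number of edges is $a'\binom{\ell-1}{2}+\binom{\ell/2-1}{2}+(\ell/2-1)m$; using the identities $(\ell/2-1)(\ell-1)=\binom{\ell-1}{2}$ and $\binom{\ell-1}{2}=\binom{\ell/2-1}{2}+(\ell/2-1)(\ell/2)+\binom{\ell/2}{2}$, a short calculation reduces this to $a\binom{\ell-1}{2}+\binom{b}{2}$ for both $b=\ell/2-1$ and $b=\ell/2$.

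For the upper bound, I would proceed by strong induction on $n$. The base case $n<\ell-1$ gives $e(G)\le\binom{n}{2}=\binom{b}{2}$ trivially. For the inductive step, let $G$ be an extremal $P_\ell$-free graph on $n\ge\ell-1$ vertices. If $G$ is disconnected, I would split off a component $C$ and apply induction to $C$ and $G-C$ separately; one checks that the target function $f(n):=a\binom{\ell-1}{2}+\binom{b}{2}$ is superadditive in the relevant range, so the two bounds combine to the desired total. If $G$ is connected and $n\ge\ell$, I would take a longest path $P=v_1\cdots v_k$; by maximality, $k\le\ell-1$ and all neighbours of $v_1,v_k$ lie on $P$. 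A P\'osa rotation argument then restricts how $V(G)\setminus V(P)$ attaches to $V(P)$, with enough rigidity to identify a separable set $S$ of $\ell-1$ vertices carrying at most $\binom{\ell-1}{2}$ edges and no edges to the rest. Applying induction to $G-S$ closes the bound, yielding $e(G)\le\binom{\ell-1}{2}+\textup{ex}(n-\ell+1,P_\ell)$.

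The main obstacle will be pinning down the extremal graphs, in particular explaining why the second family arises exactly when $\ell$ is even and $b\in\{\ell/2-1,\ell/2\}$. This requires tracing equality through the P\'osa rotation step: equality forces the vertices outside $V(P)$ to be completely joined to a clique of size $\ell/2-1$ inside $V(P)$, producing precisely the structure $K_{\ell/2-1}+E_m$, while parity of $\ell$ enters because the clique size $\ell/2-1$ must be an integer. Showing that only $b\in\{\ell/2-1,\ell/2\}$ support this alternative configuration (and that the parameter $a'$ can range freely over $0\le a'<a$) will demand a careful case analysis of how the longest path interacts with the remaining vertices, and is the technical heart of the proof.
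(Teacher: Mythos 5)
The paper does not prove this statement; it is quoted from Faudree and Schelp \cite{FS75} and used as a black box, so there is no internal proof to compare against. Judging your proposal on its own merits: the lower-bound half is essentially fine (the phrase ``must alternate'' should be replaced by the observation that between any two visits to the independent side the path must pass through a clique vertex, so it sees at most $(\ell/2-1)+\ell/2=\ell-1$ vertices; your edge-count identities do check out for both $b=\ell/2$ and $b=\ell/2-1$), and the reduction of the upper bound to connected graphs via superadditivity of $f(n)=a\binom{\ell-1}{2}+\binom{b}{2}$ is sound and standard.

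The connected case, however, contains a genuine flaw and hides the entire content of the theorem. You propose to find, in a connected $P_\ell$-free graph $G$ on $n\ge\ell$ vertices, a set $S$ of $\ell-1$ vertices ``carrying at most $\binom{\ell-1}{2}$ edges and no edges to the rest''; but a connected graph on more than $\ell-1$ vertices has no proper vertex subset with no edges to its complement, so such an $S$ cannot exist. If what you intend is a set $S$ of $\ell-1$ vertices incident (in total) to at most $\binom{\ell-1}{2}$ edges, then the induction is at least arithmetically consistent, since $f(n)-f(n-\ell+1)=\binom{\ell-1}{2}$; but the existence of such a set in every connected $P_\ell$-free graph is essentially equivalent to the theorem and is exactly what a P\'osa rotation argument does \emph{not} hand you for free --- note that a typical set of $\ell-1$ vertices in a graph with $\sim\frac{\ell-2}{2}n$ edges meets about $2\binom{\ell-1}{2}$ edges, and deleting a minimum-degree vertex loses up to $\ell-2$ edges while $f(n)-f(n-1)$ can be as small as $0$. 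The published proofs instead establish a separate sharp bound for \emph{connected} $P_\ell$-free graphs on $n$ vertices, roughly $e(G)\le\max\bigl\{\binom{\ell-2}{2}+(n-\ell+2),\ \binom{\lceil\ell/2\rceil-1}{2}+(\lfloor\ell/2\rfloor-1)(n-\lceil\ell/2\rceil+1)\bigr\}$, via a longest-path/rotation analysis, and then optimize this over all ways of partitioning $n$ among components; the second extremal family and the restriction to $b\in\{\ell/2-1,\ell/2\}$ drop out of that optimization. Your write-up acknowledges that the extremal characterization is ``the technical heart'' but supplies no argument for it, so as it stands the proposal proves neither the exact value of $\ex(n,P_\ell)$ nor the list of extremal graphs.
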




Caro and Yuster \cite{CY00} determined the function $\textup{ex}_p(n,P_\ell)$ for $p\ge 2$, $\ell\ge 4$, and sufficiently large $n$, and they showed that the extremal graph is unique. To state their result, we define the graph $H(n,\ell)$ 
as follows. Let $b=\lfloor\frac{\ell}{2}\rfloor-1$. Then $H(n,\ell)=K_b+E_{n-b}$ if $\ell$ is even, and $H(n,\ell)$ is $K_b+E_{n-b}$ with an edge added to $E_{n-b}$ if $\ell$ is odd.

\begin{thm}\label{CYthm1}\,\textup{\cite{CY00}}
Let $p\ge 2$, $\ell\ge 4$, and $n \ge n_0(\ell)$ be sufficiently large. Then
\begin{align*}
\textup{ex}_p(n, P_\ell) &= e_p(H(n,\ell))\\
&=\left\{
\begin{array}{ll}
b(n-1)^{p}+(n-b-2)b^{p}+2(b+1)^p & \textup{\emph{if $\ell$ is odd,}}\\
b(n-1)^{p}+(n-b)b^p & \textup{\emph{if $\ell$ is even,}}
\end{array}
\right.
\end{align*}
where $b=\lfloor\frac{\ell}{2}\rfloor-1$. Moreover, $H(n,\ell)$ is the unique extremal graph.
\end{thm}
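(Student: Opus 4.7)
The theorem has a lower-bound part (that $H(n,\ell)$ is $P_\ell$-free and has the claimed $e_p$) and a matching upper-bound part with uniqueness. For the lower bound, I would verify $P_\ell$-freeness of $H(n,\ell)$ by a direct longest-path analysis: any path in $K_b + E_{n-b}$ must alternate $E_{n-b}$-vertices with $K_b$-separators (since $E_{n-b}$ is independent), yielding at most $2b+1 = \ell-1$ vertices when $\ell$ is even; for $\ell$ odd, the single extra edge in $E_{n-b}$ allows two $E_{n-b}$-vertices to be consecutive in a path, extending the bound by one to $2b+2 = \ell-1$. The formula for $e_p(H(n,\ell))$ then follows immediately from the explicit degree sequence ($b$ vertices of degree $n-1$, plus $n-b$ or $n-b-2$ vertices of degree $b$, plus $0$ or $2$ vertices of degree $b+1$).

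For the upper bound, let $G$ be $P_\ell$-free on $n$ vertices with $e_p(G) = \ex_p(n, P_\ell)$; I aim to show $G = H(n,\ell)$ in three stages. \textbf{Stage 1.} For a threshold $T = T(n)$ with $T \to \infty$ and $T = o(n)$, let $A = \{v : d_G(v) \geq T\}$. Show $|A| \leq b$ by a greedy argument: if $v_1,\dots,v_{b+1} \in A$, inductively select distinct connectors $w_i \in N(v_i) \cap N(v_{i+1})$ avoiding previously-used vertices, then extend the alternating path $v_1 w_1 v_2 \cdots v_{b+1}$ at both ends by further neighbours of $v_1$ and $v_{b+1}$ to obtain $P_{2b+3} \supseteq P_\ell$, contradicting $P_\ell$-freeness. \textbf{Stage 2.} By Theorem \ref{EGthm1}, $\sum_v d_G(v) \leq (\ell-2)n$, so $\sum_{v \in B} d_G(v)^p \leq T^{p-1}(\ell-2)n = o(n^p)$ for $B = V \setminus A$ and $p \geq 2$. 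Combined with $e_p(G) \geq b(n-1)^p$ and $\sum_{v \in A} d_G(v)^p \leq |A|(n-1)^p$, this forces $|A| = b$; a convexity estimate ($(n-1)^p - (n-2)^p = \Theta(n^{p-1})$) then forces $d_G(v) = n-1$ for every $v \in A$. \textbf{Stage 3.} Now $G = K_b + G[B]$ with $|B| = n-b$, and any path in $G$ consists of $a \leq b$ vertices from $A$ interleaved with $r \leq b+1$ vertex-disjoint ``runs'' of $B$-vertices totalling $h$ vertices. The condition $a + h \leq \ell - 1$ forces $G[B]$ to be edgeless when $\ell$ is even, and to contain no $P_3$ and no two vertex-disjoint edges (hence at most one edge) when $\ell$ is odd. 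Extremality then fixes the edge count at $0$ or $1$, so $G = H(n,\ell)$.

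\textbf{Main obstacle.} The critical challenge is Stage 2, which demands that the threshold $T$ be calibrated to simultaneously allow Stage 1's connector selection (favouring large $T$ so that common neighbourhoods are sizable) and keep $T^{p-1}n \ll n^{p-1}$ in Stage 2 (favouring small $T$). The case $p = 2$ is tightest, as the convexity cost $(n-1)^2 - (n-2)^2 = 2n - 3$ of a single-unit degree deficit and the Erd\H{o}s--Gallai-based upper bound on the $B$-contribution are of comparable order; one may need a more refined analysis or a fallback local edge-exchange argument (adding a missing edge $uv$ for some $v \in A$ with $d_G(v) < n - 1$, possibly after removing a suitable surplus edge elsewhere, and checking that $P_\ell$-freeness is preserved) to enforce exact equality.
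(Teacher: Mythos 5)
The paper does not actually prove this statement: Theorem \ref{CYthm1} is quoted from Caro--Yuster \cite{CY00}, and the closest argument the paper carries out itself is the proof of Theorem \ref{LFthm}, which is the right template to compare against. Measured against that template, your plan has two genuine gaps. First, Stage 1 is false as stated: with a threshold $T=o(n)$, a $P_\ell$-free graph can have far more than $b$ vertices of degree at least $T$. For instance, a disjoint union of $b+1$ stars, each with $n/(b+1)-1$ leaves, is $P_4$-free (hence $P_\ell$-free for all $\ell\ge 4$) and has $b+1$ vertices of unbounded degree. Your greedy connector step silently assumes that consecutive high-degree vertices $v_i,v_{i+1}$ have a common neighbour, which nothing guarantees when $T=o(n)$. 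The standard fix (and what the paper does via Lemma \ref{CYlem2}) is to take the threshold to be a constant fraction of $n$, e.g.\ $0.65n$: then any two such vertices have at least $0.3n-O(1)$ common neighbours and the path-building works; and to rule out having \emph{fewer} than $b$ vertices above that threshold one needs a separate convexity lemma combining the Erd\H{o}s--Gallai bound $e(G)\le(\tfrac{\ell}{2}-1)n$ with the degree sequence (this is exactly Lemma \ref{CYlem2}), not just the crude bound $\sum_{v\in A}d_v^p\le|A|(n-1)^p$.

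Second, the quantitative bookkeeping in Stage 2 does not close, and you have only partially acknowledged this. To force $d_G(v)=n-1$ for all $v\in A$ you must compare a degree-deficit penalty of order $n^{p-1}$ against the low-degree contribution $\sum_{v\in B}d_G(v)^p\le T^{p-1}(\ell-2)n$; this requires $T^{p-1}n\ll n^{p-1}$, i.e.\ $T\ll n^{(p-2)/(p-1)}$, which is impossible for $p=2$ with $T\to\infty$, and in any case incompatible with Stage 1's need for large $T$. The ``fallback local edge-exchange'' you mention is not a minor patch but the actual substance of the proof: the way both \cite{CY00} and the paper's proof of Theorem \ref{LFthm} finish is structural --- one shows (after $e_p$-increasing, $P_\ell$-freeness-preserving modifications, as in Claims \ref{clm3} and \ref{clm4}) that $G$ is a subgraph of $H(n,\ell)$, whence $e_p(G)\le e_p(H(n,\ell))$ with equality only for $G=H(n,\ell)$, with no asymptotic comparison needed at the final step. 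Your lower-bound verification and Stage 3 (the analysis of $G[B]$ once $G=K_b+G[B]$ is established) are correct, but Stages 1 and 2 as written do not yield the exact result.
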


They remarked that the extremal graph $H(n,\ell)$ for $\textup{ex}_p(n,P_\ell)$, with $p\ge 2$, is very different from the extremal graphs for $\textup{ex}(n,P_\ell)$ in Theorem \ref{FSthm}. This is because $H(n,\ell)$ has large maximum degree, which plays a role in making the value of $e_p(H(n,\ell))$ large, when $p\ge 2$.

When $F=S_r$ is a star, Caro and Yuster \cite{CY00} made the observation that $\textup{ex}_p(n,S_r)$ is attained by a graph $L$ on $n$ vertices which is an extremal graph for $\textup{ex}(n,S_r)$. Clearly if $n\le r-1$, we have $L=K_n$. For $n\ge r$, we have $L$ is an $(r-1)$-regular graph if $(r-1)n$ is even, and $L$ has $n-1$ vertices of degree $r-1$ and one vertex of degree $r-2$ if $(r-1)n$ is odd. We call such a graph $L$ a \emph{near $(r-1)$-regular graph}, since $L$ is as close to being $(r-1)$-regular as possible. It is well-known and easy to show that such graphs $L$ exist. Note that we have $e(L)=\big\lfloor\frac{(r-1)n}{2}\big\rfloor$. Thus, the observation of Caro and Yuster is the following.

\begin{prop}\label{CYprp1}\,\textup{\cite{CY00}}
Let $p\ge 1$, and let $S_r$ be the star with maximum degree $r\ge 1$.
\begin{enumerate}
\item[(a)] If $n\leq r-1$, then $\textup{ex}_{p}(n,S_r)=n(n-1)^{p}$. Moreover, the unique extremal graph is $K_n$.
\item[(b)] If $n\ge r$, then
\[
\ex_p(n,S_r)=
\left\{
\begin{array}{ll}
(n-1)(r-1)^{p}+(r-2)^{p} & \textup{\emph{if $(r-1)n$ is odd,}}\\
n(r-1)^{p} & \textup{\emph{if $(r-1)n$ is even.}}
\end{array}
\right.
\]
Moreover, the extremal graphs are the near $(r-1)$-regular graphs on $n$ vertices.
\end{enumerate}
\end{prop}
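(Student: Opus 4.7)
The plan hinges on the trivial observation that a graph $G$ contains $S_r$ as a subgraph if and only if $\Delta(G) \ge r$. Hence $G$ is $S_r$-free precisely when $d_i \le r-1$ for every $i$, and the whole problem collapses to maximising $\sum_{i=1}^n d_i^p$ over all realisable degree sequences with $d_i \le \min\{n-1, r-1\}$.

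For part (a), when $n \le r-1$, the upper bound $d_i \le n-1 \le r-2$ is automatic, so every graph on $n$ vertices is $S_r$-free. The trivial bound $d_i \le n-1$ then gives $e_p(G) \le n(n-1)^p$, with equality if and only if $G = K_n$. This handles (a) in one line.

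For part (b), when $n \ge r$, the constraint $d_i \le r-1$ immediately yields $e_p(G) \le n(r-1)^p$. If $(r-1)n$ is even, this bound is attained by any $(r-1)$-regular graph on $n$ vertices, which exists by standard circulant constructions. If $(r-1)n$ is odd, no $(r-1)$-regular graph exists because $\sum d_i = 2e(G)$ must be even; letting $D = \sum_i (r-1-d_i)$ denote the total deficit, we have $D \equiv n(r-1) \pmod 2$, which is odd, so $D \ge 1$. I would then argue that the penalty
\[
n(r-1)^p - e_p(G) \;=\; \sum_{i\,:\,d_i \le r-2}\bigl[(r-1)^p - d_i^p\bigr]
\]
is at least $(r-1)^p - (r-2)^p$: each term in the sum has $d_i \le r-2$ and therefore contributes at least $(r-1)^p - (r-2)^p$, and having more than one such term only enlarges the penalty. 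Equality holds precisely when a single vertex has degree $r-2$ and the other $n-1$ vertices have degree $r-1$, which both gives the claimed value and identifies the extremal graphs as the near $(r-1)$-regular graphs.

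The only real technical point is confirming the existence of near $(r-1)$-regular graphs on $n$ vertices for every $n \ge r$; this is a standard fact, provable via circulant constructions or directly from the Erd\H{o}s--Gallai realisability theorem. Everything else is a short optimisation of a degree sequence under a parity constraint, so I do not anticipate any serious obstacle.
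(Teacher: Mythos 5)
Your proposal is correct and follows essentially the same route as the paper, which treats this as the simple observation that $S_r$-freeness is equivalent to $\Delta(G)\le r-1$, reducing the problem to maximising $\sum d_i^p$ under the degree cap together with the parity constraint $\sum d_i = 2e(G)$. Your deficit argument for the odd case and the appeal to standard realisability of near $(r-1)$-regular graphs match the paper's (implicit) reasoning exactly.
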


For $\ell\ge 4$ and $s\ge 0$, let $B_{\ell,s}$ be the graph on $\ell+s$ vertices, obtained by adding $s$ pendent edges to a penultimate vertex $v$ of $P_\ell$. Such a graph $B_{\ell,s}$ is a \emph{broom}, and $v$ is the \emph{centre} of the broom.\\[1ex]
\[ \unit = 1cm
\thnline 
\dl{-3}{0}{-1}{0}\dl{0}{0}{3}{0}\dl{2}{0}{1.35}{1}\dl{2}{0}{1.6}{1}\dl{2}{0}{1.85}{1}\dl{2}{0}{2.65}{1}
\pt{-3}{0}\pt{-2}{0}\pt{-1}{0}\pt{3}{0}\pt{2}{0}\pt{1}{0}\pt{0}{0}
\point{1.6}{0.1}{{\footnotesize$v$}}\point{-0.65}{-0.8}{{\footnotesize$\ell$ vertices}}\point{1.34}{1.58}{{\footnotesize$s$ vertices}}\point{-0.73}{-0.1}{$\cdots$}
\dl{-3}{-0.35}{-0.1}{-0.35}\dl{3}{-0.35}{0.1}{-0.35}\bez{-3}{-0.35}{-3.1}{-0.35}{-3.1}{-0.25}\bez{3}{-0.35}{3.1}{-0.35}{3.1}{-0.25}\bez{-0.1}{-0.35}{0}{-0.35}{0}{-0.45}\bez{0.1}{-0.35}{0}{-0.35}{0}{-0.45}
\point{2.03}{0.9}{$\cdots$}\pt{1.35}{1}\pt{1.6}{1}\pt{1.85}{1}\pt{2.65}{1}
\dl{1.35}{1.35}{1.9}{1.35}\dl{2.1}{1.35}{2.65}{1.35}\bez{1.35}{1.35}{1.25}{1.35}{1.25}{1.25}\bez{2.65}{1.35}{2.75}{1.35}{2.75}{1.25}\bez{1.9}{1.35}{2}{1.35}{2}{1.45}\bez{2.1}{1.35}{2}{1.35}{2}{1.45}
\ptlu{0}{-1.8}{\textup{\normalsize Figure 1. The broom graph $B_{\ell,s}$}}
\]\\[0.7ex]
\indent It is interesting to study Tur\'an-type problems for brooms, because a broom may be considered as a generalisation of both a path and a star. Sun and Wang \cite{SW11} determined the function $\ex(n, B_{4,s})$ for $s\ge 1$, as follows.

\begin{thm}\label{SWthm1}\,\textup{\cite{SW11}}
Let $s\ge 1$ and $n\ge s+4$. Let $n=a(s+3)+b$, where $a\ge 1$ and $0\le b<s+3$. We have
\[
\ex(n, B_{4,s})=
\left\{
\begin{array}{ll}
\displaystyle (a-1){s+3\choose 2}+\Big\lfloor\frac{(s+1)(s+3+b)}{2}\Big\rfloor & \textup{\emph{if $s\ge 3$ and $2\le b\le s$,}}\\[2ex]
\displaystyle a{s+3\choose 2}+{b\choose 2}  & \textup{\emph{otherwise.}}
\end{array}
\right.
\]
\end{thm}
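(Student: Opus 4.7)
My plan is to establish the bound componentwise, reducing the problem to a combinatorial optimization over how $n$ is partitioned into component sizes.

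For the lower bound, I would exhibit two candidate extremal graphs. The first is $G_1 = aK_{s+3} \cup K_b$, which is $B_{4,s}$-free because every component has at most $s+3 < s+4 = |V(B_{4,s})|$ vertices, and has $a{s+3 \choose 2} + {b \choose 2}$ edges. The second is $G_2 = (a-1)K_{s+3} \cup H$, where $H$ is a near-$(s+1)$-regular graph on $s+3+b$ vertices (such $H$ exists by standard parity arguments). Since $B_{4,s}$ contains a vertex of degree $s+2$ whereas $\Delta(H) \le s+1$, the graph $G_2$ is also $B_{4,s}$-free, and has $(a-1){s+3 \choose 2} + \lfloor (s+1)(s+3+b)/2 \rfloor$ edges. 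A short computation gives $2\bigl(e(G_2) - e(G_1)\bigr) = b(s+2-b) - (s+3)$ up to a floor correction, so $G_2$ strictly beats $G_1$ exactly when $b(s+2-b) > s+3$, which is equivalent to $s \ge 3$ and $2 \le b \le s$. This identifies the two cases in the statement.

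The heart of the upper bound is the following structural lemma: if $G$ is $B_{4,s}$-free and $C$ is a connected component with $|V(C)| \ge s+4$, then $e(G[C]) \le \lfloor (s+1)|V(C)|/2 \rfloor$. To prove it, suppose some $v \in V(C)$ has $d(v) \ge s+2$. If $d(v) \ge s+3$: any neighbor $u$ of $v$ with $d(u) \ge 2$ has some $w \in N(u) \setminus \{v\}$, and $v$ still has at least $s+1$ other neighbors distinct from $w$, yielding a $B_{4,s}$ centered at $v$. Hence every neighbor of $v$ is a leaf, forcing $C = K_{1,d(v)}$, which has $|V(C)| - 1 \le \lfloor (s+1)|V(C)|/2 \rfloor$ edges. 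If $d(v) = s+2$: then $|N[v]| = s+3 < |V(C)|$, so by connectedness some $w \in V(C) \setminus N[v]$ is adjacent to some $u \in N(v)$; then $v$, $u$, $w$ together with the $s+1$ other neighbors of $v$ (automatically distinct from $w$ since $w \notin N(v)$) form a $B_{4,s}$ — contradiction. Thus either $C$ is a star or $\Delta(G[C]) \le s+1$, and in both cases the claimed edge bound holds.

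Combining the lemma with the trivial bound $e(G[C]) \le {|V(C)| \choose 2}$ for $|V(C)| \le s+3$, each component of size $k$ contributes at most $f(k)$ edges, where $f(k) = {k \choose 2}$ for $k \le s+3$ and $f(k) = \lfloor (s+1)k/2 \rfloor$ for $k \ge s+4$; equality holds for $K_k$ and for a near-$(s+1)$-regular graph respectively. Since $f(k)/k$ is uniquely maximized at $k = s+3$, with value $(s+2)/2$, the optimum uses as many components of size $s+3$ as possible, and by convexity of ${k \choose 2}$ there is at most one small component of size $< s+3$, and by the linear-versus-quadratic comparison at most one large component of size $> s+3$. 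Hence the extremal graph is either $aK_{s+3} \cup K_b$ or $(a-1)K_{s+3} \cup L$ with $L$ a near-$(s+1)$-regular graph on $s+3+b$ vertices, and the choice between these two options is exactly the lower-bound comparison. The main obstacle I anticipate is this final optimization: carefully handling floors and parities to pin down the threshold $s \ge 3$, $2 \le b \le s$, and ruling out mixed configurations with several nonstandard components. A secondary delicate point is the $d(v) = s+2$ subcase of the structural lemma, which genuinely uses both connectedness of $C$ and the hypothesis $|V(C)| \ge s+4$, in contrast to the cleaner $d(v) \ge s+3$ case where one directly argues that $C$ is a star.
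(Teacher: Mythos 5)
This is Theorem \ref{SWthm1}, which the paper quotes from Sun and Wang \cite{SW11} without proof, so there is no in-paper argument to compare yours against; the only guidance the paper gives is the informal description of the two extremal candidates following the statement, and your $G_1$ and $G_2$ match those exactly. Your proposal is sound and essentially self-contained. The structural lemma is the real content and you prove it correctly: the case $d(v)\ge s+3$ does force $C$ to be a star (every neighbour of $v$ must be a leaf, and connectivity then gives $V(C)=N[v]$), and the case $d(v)=s+2$ correctly uses both connectedness and $|V(C)|\ge s+4$ to produce the edge $uw$ leaving $N[v]$ and hence a copy of $B_{4,s}$. With that lemma, $\ex(n,B_{4,s})$ really does equal $\max\sum_i f(k_i)$ over partitions of $n$, since every value $f(k)$ is realised by a $B_{4,s}$-free component ($K_k$ for $k\le s+3$, a near-$(s+1)$-regular graph for $k\ge s+4$).

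The one place where your write-up is still a sketch is the partition optimization, and you are right to flag it. The word ``Hence'' before ``the extremal graph is either $aK_{s+3}\cup K_b$ or $(a-1)K_{s+3}\cup L$'' hides three separate exchange arguments that you should spell out: (i) two parts below $s+3$ can be merged or rebalanced to $(s+3,\cdot)$ by strict convexity of $\binom{k}{2}$; (ii) two parts of size at least $s+4$ cannot simply be merged (that does not strictly improve the floor of a linear function) --- instead repartition their union into copies of $s+3$ plus a remainder $r$ and check $(k_1+k_2)-r(s+3-r)>0$, which uses $k_1+k_2\ge 2s+8$; (iii) a small part and a large part cannot coexist: for $1\le b'\le s+1$ merge the small part into the large one (gain roughly $b'(s+2-b')/2>1/2$), but for $b'=s+2$ this gains nothing and you must instead move a single vertex from the large part to complete the small clique to $K_{s+3}$. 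Finally, among configurations with one large part, the large part of size $s+3+b$ beats size $j(s+3)+b$ for $j\ge 2$ (each extra block of $s+3$ vertices inside the near-regular part costs about $(s+3)/2$ edges versus a separate $K_{s+3}$). Once these are written out, the comparison collapses to your $2(e(G_2)-e(G_1))=b(s+2-b)-(s+3)$ computation, and the boundary equalities (e.g.\ $s=3$, $b\in\{2,3\}$) explain why the two formulas in the theorem agree there. None of this is a conceptual gap, only bookkeeping you have already anticipated.
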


Roughly speaking, in Theorem \ref{SWthm1}, the value of $\ex(n,B_{4,s})$ is attained as follows. If $b$ is close to either $0$ or $s+3$, then we would take the graph $aK_{s+3}\cup K_b$. Otherwise, we would take a graph $(a-1)K_{s+3}\cup L$, where $L$ is a near $(s+1)$-regular graph on $s+3+b$ vertices.

Sun and Wang also determined the function $\ex(n,B_{5,s})$ for $s\ge 1$ and $n\ge s+5$. However, their result is complicated to state in full. A key result that they proved is the following.
\begin{thm}\label{SWthm2}\,\textup{\cite{SW11}}
Let $s\ge 1$ and $n\ge s+5$. Let $n=a(s+4)+b$, where $a\ge 1$ and $0\le b<s+4$. We have
\[
\ex(n, B_{5,s})=
\left\{
\begin{array}{ll}
\displaystyle (a-1){s+4\choose 2}+\ex(s+4+b, B_{5,s}) & \textup{\emph{if $1\le b\le s$,}}\\[2ex]
\displaystyle a{s+4\choose 2}+{b\choose 2}  & \textup{\emph{if $b\in\{0,s+1,s+2,s+3\}$.}}
\end{array}
\right.
\]
\end{thm}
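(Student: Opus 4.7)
The plan is to prove Theorem~\ref{SWthm2} by induction on $a$, leveraging the trivial observation that $K_{s+4}$ is $B_{5,s}$-free, since $|V(B_{5,s})|=s+5>s+4$. The base case $a=1$ reduces to determining $\ex(s+4+b,B_{5,s})$ for $0\le b<s+4$; in the range $b\in\{0,s+1,s+2,s+3\}$ one must verify the identity $\ex(s+4+b,B_{5,s})=\binom{s+4}{2}+\binom{b}{2}$ directly, by analysing $B_{5,s}$-free graphs on at most $2s+7$ vertices and appealing to Theorem~\ref{EGthm1} with $\ell=5$ in the $P_5$-free subcase.

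For the lower bound in the inductive step, I exhibit explicit constructions. When $b\in\{0,s+1,s+2,s+3\}$, the graph $aK_{s+4}\cup K_b$ is $B_{5,s}$-free because every component has at most $s+4<s+5$ vertices, and it has $a\binom{s+4}{2}+\binom{b}{2}$ edges. When $1\le b\le s$, the graph $(a-1)K_{s+4}\cup G^*$, where $G^*$ is an extremal $B_{5,s}$-free graph on $s+4+b$ vertices supplied by the base case, is $B_{5,s}$-free and has $(a-1)\binom{s+4}{2}+\ex(s+4+b,B_{5,s})$ edges.

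For the upper bound, let $G$ be an extremal $B_{5,s}$-free graph with $a\ge 2$. The key structural step is to exhibit a set $S\subseteq V(G)$ of size $s+4$ with $G[S]\cong K_{s+4}$ and no edges crossing to $V(G)\setminus S$; given such an $S$, the graph $G-S$ is $B_{5,s}$-free on $(a-1)(s+4)+b$ vertices, and the inductive hypothesis together with $e(G)=\binom{s+4}{2}+e(G-S)$ yields exactly the claimed bound in both cases. To find $S$, I would examine the components of $G$. Any component $C$ with $|V(C)|\le s+4$ must, by extremality, be a complete graph, since $K_{|V(C)|}$ is $B_{5,s}$-free. Any component $C$ with $|V(C)|\ge s+5$ is highly constrained: if it contains a $P_5$ then the penultimate vertex of every such $P_5$ has at most $s-1$ neighbours outside the path (else a $B_{5,s}$ arises), while if $C$ is $P_5$-free then Theorem~\ref{EGthm1} bounds $e(C)\le\frac{3}{2}|V(C)|$.

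The hardest step will be pinning down the structure of large components and showing that they can either be decomposed as a detachable $K_{s+4}$ plus a smaller $B_{5,s}$-free graph, or else carry few enough edges that the extremal construction $aK_{s+4}\cup K_b$ strictly dominates $e(G)$. The case analysis hinges on how $P_4$'s and $P_5$'s propagate out of high-degree vertices in $B_{5,s}$-free graphs, and is analogous in spirit to the arguments used by Sun and Wang for $B_{4,s}$ in Theorem~\ref{SWthm1}, but the extra length of the path forces a more delicate path-extension analysis. A secondary technical point is the consolidation step: if $G$ has several small cliques as components, one must verify by convexity of $\binom{k}{2}$ that combining them into as many $K_{s+4}$'s as possible (plus a remainder) is indeed optimal, and that the remainder naturally falls into the prescribed ranges of the piecewise formula.
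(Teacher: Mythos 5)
First, a point of context: the paper does not prove this statement. Theorem~\ref{SWthm2} is quoted verbatim from Sun and Wang \cite{SW11} in the survey section (the authors explicitly say the full result of \cite{SW11} is ``complicated to state in full''), so there is no in-paper proof against which to compare your argument; it can only be judged on its own merits.

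On those merits, your write-up is a plan rather than a proof, and the deferred steps are precisely where the entire content of the theorem lies. The inductive step rests on the claim that an extremal $B_{5,s}$-free graph $G$ with $a\ge 2$ contains a set $S$ inducing $K_{s+4}$ with no edges to $V(G)\setminus S$. Nothing you write establishes this: an extremal graph could a priori be connected, or could have one large component carrying many edges with no detachable clique. Your observations about large components (the penultimate vertex of a $P_5$ having at most $s-1$ outside neighbours; the Erd\H{o}s--Gallai bound $e(C)\le\frac{3}{2}|V(C)|$ in the $P_5$-free case) are correct but nowhere near sufficient; in particular the first observation still permits connected $B_{5,s}$-free graphs of large order with superlinear edge count in $s$, and ruling these out (or showing they decompose) is the whole difficulty. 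You acknowledge this yourself (``the hardest step will be\ldots''), which means the argument is not carried out. The same is true of the base case $a=1$, $b\in\{s+1,s+2,s+3\}$, where the identity $\ex(2s+4+b',B_{5,s})=\binom{s+4}{2}+\binom{b'}{2}$ must be verified by a genuine structural analysis that you only gesture at. (Note also that for $a=1$ and $1\le b\le s$ the stated formula is a tautology, so your base case needs no work there --- but conversely your induction then needs the actual values $\ex(m,B_{5,s})$ for $s+5\le m\le 2s+4$ as external input, since the recursion bottoms out in a quantity the theorem does not determine.) Two minor additional points: extremality (edge-maximality) does let you assume small components are complete, as you say; and your inductive step silently applies the hypothesis to $n-(s+4)=s+4$ when $a=2$, $b=0$, which falls outside the stated range $n\ge s+5$ and needs the trivial patch $\ex(s+4,B_{5,s})=\binom{s+4}{2}$.
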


Similarly, in Theorem \ref{SWthm2}, the value of $\ex(n,B_{5,s})$ is attained by $aK_{s+4}\cup K_b$ if $b$ is either $0$ or close to $s+4$. Otherwise, we would take a graph $(a-1)K_{s+4}\cup L$, where $L$ is an extremal graph for $B_{5,s}$ on  $s+4+b$ vertices.

Caro and Yuster \cite{CY00} determined the function $\ex_p(n,B_{4,s})$, for $p\ge 2$ and sufficiently large $n$. They remarked that the result is very different to Proposition \ref{CYprp1}, even though $B_{4,s}$ is very close to being a star.
\begin{prop}\label{CYprp2}\,\textup{\cite{CY00}}
Let $p\ge 2$, $s\ge 1$, and $n > 2(s+4)$. Then $\textup{ex}_{p}(n, B_{4,s}) = e_p(S_{n-1}) = (n - 1)^p + (n - 1)$. Moreover, $S_{n-1}$ is the unique extremal graph.
\end{prop}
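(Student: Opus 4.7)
The plan is to verify the lower bound directly and then prove the matching upper bound via a structural decomposition of $B_{4,s}$-free graphs. For the lower bound, $S_{n-1}$ has diameter~$2$ while $B_{4,s}$ has diameter~$3$, so $S_{n-1}$ is $B_{4,s}$-free; its degree sequence $(n-1,1,\ldots,1)$ gives $e_p(S_{n-1}) = (n-1)^p + (n-1)$, which provides the claimed lower bound.

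For the upper bound, let $G$ be any $B_{4,s}$-free graph on $n$ vertices. If $\Delta(G)\le s+2$, then $e_p(G)\le n(s+2)^p < (n-1)^p+(n-1)$ for $n$ sufficiently large, handling this case. Otherwise, assume $\Delta(G) \ge s+3$. The key structural claim I would establish first is that \emph{every neighbor $u$ of a vertex $v$ with $d_G(v) \ge s+3$ must satisfy $d_G(u) = 1$}: if $u \in N(v)$ had another neighbor $w \ne v$, then setting $v_3 = v$, $v_2 = u$, $v_1 = w$, and selecting any $s+1$ further vertices from $N(v) \setminus \{u, w\}$ (of size at least $(s+3) - 2 = s+1$) to play the roles of $v_4, u_1, \ldots, u_s$ would realise a copy of $B_{4,s}$ in $G$, a contradiction. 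Consequently, the set $A = \{v : d_G(v) \ge s+3\}$ is independent, each $v \in A$ has only degree-$1$ neighbors, and the sets $N(v)$ for $v \in A$ are pairwise disjoint. Hence $G$ decomposes as
\[
G \;=\; S_{\Delta_1} \cup S_{\Delta_2} \cup \cdots \cup S_{\Delta_a} \cup H,
\]
where each $\Delta_i \ge s+3$ and $H$ is a $B_{4,s}$-free subgraph with $\Delta(H) \le s+2$ on $m = n - \sum_{i=1}^a(\Delta_i + 1)$ vertices.

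Using the bound $e_p(H) \le m(s+2)^p$, the problem reduces to verifying
\[
\sum_{i=1}^a \bigl(\Delta_i^p + \Delta_i\bigr) + m(s+2)^p \;\le\; (n-1)^p + (n-1),
\]
with equality iff $a = 1$, $\Delta_1 = n-1$, and $m = 0$, i.e., $G = S_{n-1}$. The equality case is immediate. For strict inequality elsewhere, I would invoke convexity of $x^p$ for $p \ge 2$: if $a \ge 2$, merging all the stars into a single star on $\sum_{i=1}^a (\Delta_i + 1)$ vertices strictly increases both $\sum \Delta_i^p$ and $\sum \Delta_i$, reducing to the case $a = 1$; and if $a = 1$ with $m \ge 1$, the required inequality $(n-1)^p - (n-1-m)^p > m\bigl((s+2)^p - 1\bigr)$ holds for $n$ large since the left-hand side is at least $pm(n-1-m)^{p-1}$. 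I expect the main technical obstacle to be the borderline regime $a = 1$ with $m$ close to $n-s-4$ (so that $\Delta_1$ is only just above $s+3$) and $H$ close to a disjoint union of $K_{s+3}$'s---the equality case of the bound $e_p(H) \le m(s+2)^p$; here one must directly verify $(n-1)^p - (s+3)^p > (n-s-4)\bigl((s+2)^p - 1\bigr)$, an inequality holding once $n$ is sufficiently large relative to $s$ and $p$.
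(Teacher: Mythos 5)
The paper itself gives no proof of this proposition---it is quoted from Caro and Yuster \cite{CY00}---so there is nothing internal to compare against; I have therefore assessed your argument on its own terms. Your structural core is right and is the natural way to do this: the observation that any neighbour $u$ of a vertex $v$ with $d_G(v)\ge s+3$ must be a leaf (else $w,u,v$ start a $P_4$ and $N(v)\setminus\{u,w\}$ supplies the remaining $s+1$ vertices of $B_{4,s}$) forces every high-degree vertex to sit in a star component, and the rest of the reduction (merging stars by superadditivity of $x^p$, bounding $e_p(H)\le m(s+2)^p$, and checking the resulting one-variable inequality, which is concave in $m$ with value $0$ at $m=0$, so only the endpoint $m=n-s-4$ needs checking) is sound and yields the result, with uniqueness, for $n$ sufficiently large.

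The one substantive point is the quantitative threshold, and here your own ``borderline regime'' is not merely a technical obstacle---it is a genuine obstruction to the bound $n>2(s+4)$ as stated. A disjoint union of copies of $K_{s+3}$ is $B_{4,s}$-free (it has too few vertices per component) and has $e_p\approx n(s+2)^p$, which for $p=2$ exceeds $e_2(S_{n-1})=n(n-1)$ whenever $n-1<(s+2)^2$; concretely, for $s=10$, $p=2$, $n=29>2(s+4)$, the graph $2K_{13}\cup K_3$ has $e_2=3756$ while $e_2(S_{28})=812$. So the proposition as transcribed in this paper is false at the stated threshold, and no proof can close that gap; the correct hypothesis is $n\gtrsim (s+2)^{p/(p-1)}$, and your final inequality $(n-1)^p-(s+3)^p>(n-s-4)\bigl((s+2)^p-1\bigr)$ does hold (uniformly in $p\ge 2$) once, say, $n\ge (s+2)^2+1$. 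In short: your proof is correct for the ``$n$ sufficiently large'' version, which is the most that is true; you should just state the threshold you actually achieve rather than the one in the proposition.
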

%
%
%
%

Now we consider the case when the forest $F$ has more than one component. When $F=kP_2$, the classical Tur\'an number $\textup{ex}(n,kP_2)$ was determined by Erd\H{o}s and Gallai \cite{EG59}.


\begin{thm}\label{EGthm2}\,\textup{\cite{EG59}}
Let $k\ge 2$ and $n> \frac{5k}{2}-1$. We have $\ex(n,kP_2)={k-1\choose 2}+(k-1)(n-k+1)$. Moreover, $K_{k-1}+E_{n-k+1}$ is the unique extremal graph.
\end{thm}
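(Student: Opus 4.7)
The plan: the lower bound is immediate by inspection. The graph $K_{k-1}+E_{n-k+1}$ has exactly $\binom{k-1}{2}+(k-1)(n-k+1)$ edges, and every edge has an endpoint in the $(k-1)$-clique (the $E_{n-k+1}$ part being independent), so every matching uses a distinct clique vertex and has size at most $k-1$; hence $K_{k-1}+E_{n-k+1}$ is $kP_2$-free.

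For the upper bound and uniqueness I induct on $k$. For $k=2$ with $n\ge 5$, any graph with matching number at most one is a star or a triangle (plus isolated vertices), so $S_{n-1}=K_1+E_{n-1}$ is the unique extremal with $n-1$ edges. For the inductive step, given an extremal $kP_2$-free $G$ on $n>\tfrac{5k}{2}-1$ vertices, my aim is to exhibit a vertex $v$ with $d_G(v)=n-1$. Once found, $G-v$ is $(k-1)P_2$-free on $n-1>\tfrac{5(k-1)}{2}-1$ vertices, so induction yields $e(G-v)\le\binom{k-2}{2}+(k-2)(n-k+1)$, with equality iff $G-v=K_{k-2}+E_{n-k+1}$. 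Adding $v$ back and using the identity $\binom{k-2}{2}+(k-2)(n-k+1)+(n-1)=\binom{k-1}{2}+(k-1)(n-k+1)$ closes the bound, and equality forces $G=K_{k-1}+E_{n-k+1}$.

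To produce the full-degree vertex, I fix a maximum matching $M=\{x_iy_i:1\le i\le\nu\}$ with $\nu\le k-1$, and set $S=V(M)$, $I=V(G)\setminus S$, so $I$ is independent with $|I|\ge n-2(k-1)$. The key augmenting-path observations are: (i) if $x_i$ has at least two neighbors in $I$ then $y_i$ has none, else a length-$3$ augmenting path enlarges $M$; I then call $x_iy_i$ \emph{heavy} with $x_i$ the heavy and $y_i$ the suppressed endpoint. (ii) If $x_iy_i,x_jy_j$ are both heavy, then $y_iy_j\notin E(G)$, else a path $a,x_i,y_i,y_j,x_j,b$ with $a\in N(x_i)\cap I$, $b\in N(x_j)\cap I$, $a\ne b$ is augmenting. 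Writing $h$ for the number of heavy edges, these observations yield $e(G[S])\le\binom{2\nu}{2}-\binom{h}{2}$ and $e(S,I)\le h|I|+2(\nu-h)$. The hypothesis $n>\tfrac{5k}{2}-1$ forces $\nu=k-1$ with $h=k-1$ at the extremal, after which equality pins down the structure: the $k-1$ heavy endpoints form a clique $H$, the $k-1$ suppressed endpoints form an independent set $H'$, $H$ is completely joined to $H'\cup I$, and $H'$ has no neighbors in $I$. Every vertex of $H$ then has degree $(k-2)+(k-1)+|I|=n-1$, as required.

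The main obstacle is rigorously ruling out mixed configurations, i.e., extremal scenarios with some light matching edges or with $\nu<k-1$. My bare counting is not tight in these cases, and additional augmenting-path arguments are needed — for instance, chaining a light matching edge to a heavy one via a shared $I$-neighbor and the suppressed endpoint to build a length-$5$ or length-$7$ augmenting path. These extra constraints, together with the threshold $n>\tfrac{5k}{2}-1$, are what force the pure heavy configuration $\nu=h=k-1$ uniquely at the extremal, and their case analysis is the technical heart of Erd\H{o}s and Gallai's original argument.
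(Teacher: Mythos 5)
The paper does not prove this statement; it is quoted from Erd\H{o}s and Gallai \cite{EG59} as background, so there is no in-paper argument to compare yours against, and I can only judge the proposal on its own terms. Its architecture is sound: the lower-bound construction, the base case $k=2$, the reduction of the inductive step to exhibiting a vertex of degree $n-1$ (after which $G-v$ being $(k-1)P_2$-free does follow, since $n>\frac{5k}{2}-1\ge 2k$ leaves a neighbour of $v$ outside any $(k-1)P_2$ in $G-v$), and the arithmetic identity are all correct, as are the two augmenting-path observations (i) and (ii).

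The gap is the one you name yourself, and it is not a cosmetic one: the inequality $e(G)\le \binom{2\nu}{2}-\binom{h}{2}+h|I|+2(\nu-h)$ genuinely fails to force $\nu=h=k-1$ in the stated range of $n$. Writing $f(h)$ for the right-hand side with $\nu=k-1$ and $|I|=n-2k+2$, one computes $f(h+1)-f(h)=n-2k-h$, and $f(k-1)$ equals the target $\binom{k-1}{2}+(k-1)(n-k+1)=(k-1)(n-\frac{k}{2})$ exactly; hence for $\frac{5k}{2}-1<n\le 3k-2$ the function $f$ peaks at an intermediate value $h\approx n-2k$ \emph{strictly above} the target, so mixed configurations cannot be excluded by this count. (The bound is only self-sufficient once $n\ge 3k-1$, and the analogous issue arises for $\nu<k-1$.) Since the whole point of the theorem is the sharp threshold $n>\frac{5k}{2}-1$ --- below it the extremal graph switches to $K_{2k-1}$ plus isolated vertices --- the extra augmenting-path constraints you allude to (length-$5$ and length-$7$ augmentations tying light matching edges to heavy ones and to $I$) are not an optional refinement but the essential content of the proof, and they are not supplied. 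The same omission leaves the equality/uniqueness analysis incomplete outside the pure all-heavy case. As it stands this is a correct plan with the technical core missing, not a proof.
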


For $n\le \frac{5k}{2}-1$, Erd\H{o}s and Gallai also determined $\ex(n,kP_2)$ and the extremal graphs, which are different from those in Theorem \ref{EGthm2}. For the function $\textup{ex}(n,kP_3)$, Yuan and Zhang \cite{YZ17} obtained the following result.

\begin{thm}\label{YZthm}\,\textup{\cite{YZ17}}
Let $k\ge 2$ and $n>5k-1$. We have $\ex(n,kP_3)={k-1 \choose 2}+(k-1)(n-k+1)+\lfloor \frac{n-k+1}{2}\rfloor$.
Moreover, $K_{k-1}+M_{n-k+1}$ is the unique extremal graph.
\end{thm}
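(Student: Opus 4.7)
The plan is to proceed by induction on $k$, using the trivial base case $k=1$ (where $\textup{ex}(n,P_3)=\lfloor n/2\rfloor$ and $M_n$ is the unique extremal graph, since any $P_3$-free graph is a matching with isolated vertices). The construction $K_{k-1}+M_{n-k+1}$ is $kP_3$-free and has the stated number of edges: every $P_3$ uses at least one of the $k-1$ vertices of the $K_{k-1}$ part, because $M_{n-k+1}$ by itself contains no $P_3$, so at most $k-1$ vertex-disjoint copies of $P_3$ can fit.

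For the inductive step, let $G$ be an $n$-vertex $kP_3$-free graph with $n>5k-1$, and let $v\in V(G)$ have maximum degree $\Delta=\Delta(G)$. The pivotal observation is the following: if $G-v$ contains $k-1$ vertex-disjoint copies of $P_3$ spanning a vertex set $U$ with $|U|=3(k-1)$, then $v$ has at most one neighbor in $V(G)\setminus(U\cup\{v\})$, because otherwise two such neighbors $u_1,u_2$ would make $u_1vu_2$ a $P_3$ disjoint from the $(k-1)P_3$, creating a $kP_3$ in $G$. Consequently $d(v)\le 3(k-1)+1=3k-2$ whenever $G-v$ fails to be $(k-1)P_3$-free.

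Case (i): $\Delta\ge 3k-1$. The contrapositive of the observation forces $G-v$ to be $(k-1)P_3$-free, and the induction hypothesis (valid since $n-1>5(k-1)-1$) gives $e(G-v)\le\binom{k-2}{2}+(k-2)(n-k+1)+\lfloor(n-k+1)/2\rfloor$. A direct computation using $\binom{k-1}{2}-\binom{k-2}{2}=k-2$ shows that adding at most $d(v)\le n-1$ edges attains the claimed target bound, with equality exactly when $d(v)=n-1$ and $G-v\cong K_{k-2}+M_{n-k+1}$; joining $v$ as a universal vertex then yields $G\cong K_{k-1}+M_{n-k+1}$, establishing uniqueness in this case.

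Case (ii): $\Delta\le 3k-2$. This is the main obstacle. A crude packing argument --- taking a maximum family $\mathcal{P}$ of vertex-disjoint $P_3$s (which has size $\le k-1$) and noting that $G-V(\mathcal{P})$ is $P_3$-free by maximality and so is a matching with isolated vertices --- yields only $e(G)\le 3(k-1)(3k-2)+\lfloor n/2\rfloor$, which beats the target $\sim(k-\tfrac12)n$ solely for $n$ much larger than $k^2$. To close this gap under the modest hypothesis $n>5k-1$, I would run a secondary induction on $n$: if $\delta(G)\le k-1$, then deleting a vertex $v$ with $d(v)\le k-1$ preserves $kP_3$-freeness, and the induction hypothesis on $n-1$ gives the target (using that the target function grows by at least $k-1$ when $n$ is incremented); whereas if $\delta(G)\ge k$, then combined with $\Delta\le 3k-2$ and $n>5k-1$, an augmentation argument on $\mathcal{P}$ (swapping a matching edge of $G-V(\mathcal{P})$ together with a neighbor in $V(\mathcal{P})$ to form a new $P_3$ that displaces a vertex of an existing $P_3$ in $\mathcal{P}$) should produce $k$ vertex-disjoint $P_3$s, contradicting $kP_3$-freeness.
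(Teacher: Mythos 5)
The paper does not prove Theorem \ref{YZthm} --- it is quoted from Yuan and Zhang \cite{YZ17} --- so there is no in-paper argument to compare against, and your attempt must stand on its own. Your lower-bound construction and Case (i) are correct and complete: the observation that a vertex $v$ with $d(v)\ge 3k-1$ forces $G-v$ to be $(k-1)P_3$-free, combined with the identity $\binom{k-1}{2}+(k-1)(n-k+1)=\binom{k-2}{2}+(k-2)(n-k+1)+(n-1)$, cleanly reduces that case (including uniqueness) to the theorem for $k-1$ on $n-1$ vertices, which is available since $n-1>5(k-1)-1$.

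The genuine gap is Case (ii), and it is not a routine one. First, the crude packing bound $e(G)\le 3(k-1)(3k-2)+\lfloor(n-3k+3)/2\rfloor$ falls below the target exactly when $n\ge\frac{19k}{2}-7$ (not ``$n\gg k^2$'' as you state, but in any case nowhere near $n=5k$). Second, your secondary induction on $n$ bottoms out at $n=5k$: there you cannot delete a low-degree vertex and invoke the theorem for $5k-1$ vertices, because that value lies outside the theorem's range --- and this is no accident, since at $n=5k-1$ the competing graph $K_{3k-1}\cup M_{n-3k+1}$ has exactly as many edges as $K_{k-1}+M_{n-k+1}$, which is precisely why the threshold sits at $5k-1$. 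Any proof valid for all $n>5k-1$ must beat this second configuration by a margin of only $k-1$ edges near the threshold, which rules out all of the soft counting you propose. Third, the augmentation step in the subcase $\delta\ge k$, $\Delta\le 3k-2$ is only asserted (``should produce $k$ disjoint $P_3$s''); making it precise requires a genuine structural analysis of how the matching $G-V(\mathcal{P})$ attaches to $V(\mathcal{P})$ (already for $k=2$ one needs a case analysis ending in a degree contradiction), and this subcase is load-bearing because the $\delta\le k-1$ branch always eventually funnels into either it or the unhandled base case $n=5k$. Finally, uniqueness in Case (ii) is not addressed: the vertex-deletion branch can return equality (when $n-k$ is even, $d(v)=k-1$, and $G-v$ is extremal), so you must additionally show that every such configuration either contains $kP_3$ or is in fact $K_{k-1}+M_{n-k+1}$. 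In short, the easy half of the theorem is done correctly, but the half that makes the result nontrivial --- the bounded-degree case down to the sharp threshold --- is missing.
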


In fact, Yuan and Zhang completely determined $\ex(n,kP_3)$ and the extremal graphs for all $n$, which solved a conjecture of Gorgol \cite{G11}. Bushaw and Kettle \cite{BK11} had previously proved the case of Theorem \ref{YZthm} for $n\ge 7k$.
%

Next, there are results for the case when $F=\bigcup_{i=1}^kP_{\ell_i}$ is a linear forest, where $k\ge 2$, and we may assume that $\ell_1\ge \cdots \ge \ell_k\ge 2$. To describe the results, we define the graph $H(n,F)$ as follows. Let $b=\sum_{i=1}^{k}\lfloor\frac{\ell_{i}}{2}\rfloor-1$. Then, $H(n,F)$ is $K_b+E_{n-b}$ with a single edge added to $E_{n-b}$ if all $\ell_{i}$ are odd, and $H(n,F)=K_b+E_{n-b}$ otherwise. Note that $H(n,F)$ is $F$-free. Indeed, if $H(n,F)$ contains $F$, then the path in $F$ of order $\ell_i$ must use at least $\lfloor\frac{\ell_i}{2}\rfloor$ vertices of the $K_b$. But this cannot happen for every path in $F$, by the definition of $b$.

In the case when $F=kP_\ell$, we write $H(n,k,\ell)$ for $H(n,F)$. We have already seen the results for $\textup{ex}(n,kP_\ell)$ when $\ell=2,3$ (Theorems \ref{EGthm2} and \ref{YZthm}). For $\ell\ge 4$, Bushaw and Kettle \cite{BK11} proved the following result.

\begin{thm}\label{BKthm1}\,\textup{\cite{BK11}}
Let $k\geq 2$, $\ell\geq 4$, and $n\ge 2\ell+2k\ell(\lceil\frac{\ell}{2}\rceil+1)\binom{\ell}{\lfloor\frac{\ell}{2}\rfloor}$. We have
\[
\textup{ex}(n,kP_{\ell}) = e(H(n,k,\ell))={k\lfloor\frac{\ell}{2}\rfloor-1 \choose 2}+\Big(k\Big\lfloor\frac{\ell}{2}\Big\rfloor-1\Big)\Big(n-k\Big\lfloor\frac{\ell}{2}\Big\rfloor+1\Big)+c,
\]
where $c=1$ if $\ell$ is odd, and $c=0$ if $\ell$ is even. Moreover, $H(n,k,\ell)$ is the unique extremal graph. 
\end{thm}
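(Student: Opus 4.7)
The plan is to prove matching lower and upper bounds on $\ex(n,kP_\ell)$ and confirm uniqueness. The lower bound is immediate from the definition of $H(n,F)$: any $P_\ell$ inside $K_b + E_{n-b}$ uses at least $\lfloor\ell/2\rfloor$ vertices of $K_b$, because $E_{n-b}$ is independent and so no two consecutive path-vertices lie in $E_{n-b}$. Thus $k$ vertex-disjoint copies of $P_\ell$ would consume $k\lfloor\ell/2\rfloor = b+1$ vertices of $K_b$, which is too many. For $\ell$ odd, a short check shows that the minimum vertex cover of $P_\ell - e$ is still $\lfloor\ell/2\rfloor$ for every edge $e$ of $P_\ell$, so the single extra edge added inside $E_{n-b}$ does not save a $K_b$-vertex for any single $P_\ell$, and $H(n,k,\ell)$ remains $kP_\ell$-free.

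For the upper bound, let $G$ be a $kP_\ell$-free graph on $n$ vertices with $e(G) \ge e(H(n,k,\ell))$. Fix a threshold $C = 2b + 2k\ell$ and set $W = \{v \in V(G) : d_G(v) \ge C\}$ and $U = V(G) \setminus W$. The first step is to show $|W| \le b$. Suppose for contradiction that $|W| \ge b+1 = k\lfloor\ell/2\rfloor$. For each of $k$ planned paths, assign $\lfloor\ell/2\rfloor$ distinct vertices of $W$ at positions forming a minimum vertex cover of $P_\ell$ (and hence at non-consecutive positions, so every edge of the path is of $W$-$U$ type), and fill the remaining $U$-positions one at a time by selecting an unused neighbour in $U$ of the adjacent $W$-vertex. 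Each $W$-vertex has at least $C-b > k\ell$ neighbours in $U$, so this greedy extension always succeeds, producing a $kP_\ell$ in $G$, a contradiction.

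Next I would show $e(G[U]) \le c$. Since $|W| \le b$,
\[
e(G[U]) \;\ge\; e(G) - \binom{|W|}{2} - |W|(n-|W|) \;\ge\; e(H(n,k,\ell)) - \binom{b}{2} - b(n-b) \;=\; c.
\]
Suppose for contradiction that $e(G[U]) \ge c+1$; the inequality above is then near-tight, forcing $|W|=b$, $e(G[W]) \ge \binom{b}{2} - O(1)$, and $e(W,U) \ge b(n-b) - O(1)$, so that all but $O(1)$ vertices of $U$ are adjacent in $G$ to every vertex of $W$. I would then build a $kP_\ell$ in $G$ by using the extra $U$-edge(s) to save one $W$-vertex in one path: for $\ell$ even, a single $U$-edge placed at positions $\{1,2\}$ of some $P_\ell$ lets that path use only $\lfloor\ell/2\rfloor-1$ vertices of $W$; for $\ell$ odd, two $U$-edges concentrated in a single $P_\ell$ (placed at positions $\{1,2\}$ and $\{\ell-1,\ell\}$ if disjoint, or at positions $\{1,2,3\}$ if sharing an endpoint) reduce that path's $W$-demand likewise to $\lfloor\ell/2\rfloor - 1$. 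The total $W$-demand across the $k$ paths becomes $(k-1)\lfloor\ell/2\rfloor + (\lfloor\ell/2\rfloor-1) = b$, matching $|W|$ exactly, and the remaining $U$-positions are filled greedily via the near-complete $W$-$U$ bipartite structure. This contradicts $kP_\ell$-freeness, giving $e(G[U]) \le c$ and hence $e(G) \le e(H(n,k,\ell))$.

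For uniqueness, equality $e(G) = e(H(n,k,\ell))$ forces equality in every estimate above: $|W|=b$, $G[W] = K_b$, every vertex of $U$ adjacent to every vertex of $W$, and $e(G[U])=c$; so $G = H(n,k,\ell)$. The principal technical obstacle I foresee is the edge-existence bookkeeping in the greedy embeddings---in particular, ensuring that the $W$-vertex placed between two prescribed $U$-vertices (the endpoints of a $U$-edge and an adjacent $U$-filler) is actually adjacent in $G$ to both. The tightness of the edge count forces all but a bounded number of $W$-$U$ pairs to be edges, so for $n$ sufficiently large one can always place the (at most $O(1)$) bad $U$-vertices at end positions of the chosen paths, where they only require adjacency to other $U$-vertices along the $U$-edge, thereby avoiding the missing $W$-$U$ edges.
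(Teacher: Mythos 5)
This theorem is quoted from Bushaw--Kettle \cite{BK11} and not proved in the paper, so there is no in-paper proof to compare against; judged on its own terms, your proposal has a fatal gap in its first step. You claim that if $G$ contains $b+1$ vertices of degree at least the \emph{constant} threshold $C=2b+2k\ell$, then a greedy embedding yields $kP_\ell$. This is false: a disjoint union of $b+1$ stars $K_{1,C}$ has $b+1$ vertices of degree $C$ and contains no $P_4$ at all, hence no $kP_\ell$ for $\ell\ge 4$. The greedy breaks exactly where you elide the bookkeeping: placing the $W$-vertices at the positions $2,4,\dots,2\lfloor\ell/2\rfloor$ of $P_\ell$, every \emph{interior} odd position is path-adjacent to \emph{two} $W$-vertices, so the $U$-vertex placed there must be a \emph{common} neighbour of two prescribed $W$-vertices, and large individual degrees give you no common neighbours whatsoever. (A counting check makes this unavoidable: $P_\ell$ has $\ell-1$ edges but only $\lceil\ell/2\rceil$ non-cover positions, so some non-cover position has two cover neighbours.) The correct repair is not a larger constant but a different claim: one must first show that the top $b$ degrees are $\Omega(n)$ --- this is exactly what the paper does in its proof of Theorem \ref{LFthm}, where Lemma \ref{CYlem2} together with the Erd\H{o}s--Gallai-type edge bound forces $d_b>0.65n$, whence any two of the top vertices have at least $0.29n$ common neighbours and the alternating $X$--$A$ path embedding genuinely succeeds. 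Deriving such a degree bound from $e(G)\ge e(H(n,k,\ell))$ is a substantive step (Bushaw and Kettle instead run an induction on $k$ using a structural lemma about connected graphs with long paths), and your proposal contains no substitute for it.

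The second step also does not stand as written. From $e(G[U])\ge c+1$ you assert that your chain of inequalities is ``near-tight,'' forcing $|W|=b$ and $e(W,U)\ge b(n-b)-O(1)$; but assuming a \emph{lower} bound on $e(G[U])$ forces nothing about the other summands of $e(G)=e(G[W])+e(W,U)+e(G[U])$. To conclude that almost every $U$-vertex is complete to $W$ you need $e(G[W])+e(G[U])\le\binom{b}{2}+O(1)$, i.e.\ you need an a priori \emph{upper} bound on $e(G[U])$ of constant order --- which is essentially the statement you are trying to prove. This circularity is repairable (e.g.\ by first bounding $e(G[U])$ linearly via the $P_\ell$-freeness of $G[U]$ after the high-degree structure is pinned down, as in Claims \ref{clm1}--\ref{clm4} of the paper's proof of Theorem \ref{LFthm}), but as proposed the argument does not close. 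The lower-bound/uniqueness-of-construction discussion (minimum vertex covers of $P_\ell$ and of $P_\ell-e$) is correct.
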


This result was extended by Lidick\'y et al.~\cite{LLP13}, who determined $\textup{ex}(n, F)$ for an arbitrary linear forest $F\neq kP_3$.

\begin{thm}\label{LLPthm1}\,\textup{\cite{LLP13}}
Let $k\ge 2$, and $F=\bigcup_{i=1}^kP_{\ell_i}$ be a linear forest, where $\ell_{1}\ge \ell_{2}\ge\cdots\ge \ell_{k}\ge 2$ and $\ell_{i}\neq 3$ for some $i$. Let $n\ge n_0(F)$ be sufficiently large. We have
\[
\textup{ex}(n,F)=e(H(n,F))={\sum_{i=1}^{k}\lfloor\frac{\ell_{i}}{2}\rfloor-1 \choose 2}+\Big(\sum_{i=1}^{k}\Big\lfloor\frac{\ell_{i}}{2}\Big\rfloor-1\Big)\Big(n-\sum_{i=1}^{k}\Big\lfloor\frac{\ell_{i}}{2}\Big\rfloor+1\Big)+c,
\]
where $c=1$ if all $\ell_{i}$ are odd, and $c=0$ otherwise. Moreover, $H(n,F)$ is the unique extremal graph.
\end{thm}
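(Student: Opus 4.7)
The plan is to establish the lower bound from the construction $H(n,F)$ and the upper bound (with uniqueness) by induction on $n$.

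For the lower bound, I would verify that $H(n,F)=K_b+E_{n-b}$ (plus one extra edge in $E_{n-b}$ if all $\ell_i$ are odd) is $F$-free. In any embedding, the non-hub vertices of a path $P_{\ell_i}$ form a vertex set with at most one internal edge, so $P_{\ell_i}$ must use at least $\lfloor\ell_i/2\rfloor$ vertices of the clique $K_b$. A short parity check confirms that in the all-odd case the extra edge cannot reduce the hub demand of any odd $P_{\ell_i}$ below $\lfloor\ell_i/2\rfloor$ (two consecutive non-hubs in an odd path force the non-hub count to remain $\lceil\ell_i/2\rceil$). Summing, embedding $F$ would need $\sum_i\lfloor\ell_i/2\rfloor=b+1>b$ hubs, contradicting $|K_b|=b$. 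A direct edge count in $H(n,F)$ produces the displayed formula.

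For the upper bound and uniqueness, I induct on $n$ from a base $n\ge n_0(F)$ (handled directly). Let $G$ be $F$-free on $n$ vertices with $e(G)\ge e(H(n,F))$. The central claim is that $G$ has a vertex $v$ with $d_G(v)\le b$. Granted this, $G-v$ is $F$-free on $n-1$ vertices with
\[
e(G-v)\ge e(G)-b\ge e(H(n,F))-b=e(H(n-1,F)),
\]
using the one-line identity $e(H(n,F))-e(H(n-1,F))=b$. The inductive hypothesis forces equality and $G-v=H(n-1,F)$, so $G-v$ is a dominating $b$-clique joined to a near-independent set. A brief case analysis of $N_G(v)$ then forces $d_G(v)=b$ with $N_G(v)$ equal to the $b$-clique of $G-v$: any attachment of $v$ to the near-independent part, combined with the existing structure of $H(n-1,F)$, would supply the missing $(b+1)$st hub or matching edge needed to embed $F$ in $G$, contradicting $F$-freeness. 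Hence $G=H(n,F)$.

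To prove the key lemma I argue contrapositively: if $\delta(G)\ge b+1=\sum_i\lfloor\ell_i/2\rfloor$, then $F\subseteq G$. I build $F$ greedily. The minimum-degree hypothesis gives $e(G)\ge(b+1)n/2$, enough to apply an Erd\H{o}s--Gallai-type path lemma and extract a copy of $P_{\ell_1}$. After deleting its $\ell_1$ vertices only $O(b\ell_1)$ edges are lost, so the residual graph still satisfies the Erd\H{o}s--Gallai threshold for $P_{\ell_2}$, and iterating locates vertex-disjoint copies of $P_{\ell_2},\ldots,P_{\ell_k}$ for $n$ sufficiently large.

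The main obstacle is the key lemma, and the hypothesis that some $\ell_i\ne 3$ is essential exactly here. For $F=kP_3$ the extremal graph is $K_{k-1}+M_{n-k+1}$ by Theorem~\ref{YZthm} rather than $H(n,F)$, and it has $\delta=b+1$ while being $kP_3$-free: the residual after $k-1$ greedy extractions can be a matching, which fails the Erd\H{o}s--Gallai threshold for $P_3$ by exactly one edge. Having some $\ell_i\ne 3$ permits the missing step: a matching-type edge in the near-independent part of $G$ can be absorbed into that $P_{\ell_i}$, saving a hub and closing the iteration. Formulating the absorption precisely (distinguishing between odd $\ell_i$, for which a single absorbed edge cannot save a hub, and even or long-odd-with-pair cases, for which it can) and keeping the edge-count bookkeeping tight through all $k$ iterations is the most delicate part. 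The uniqueness analysis in reattaching $v$ in the inductive step follows the same absorption bookkeeping and so shares the same core difficulty.
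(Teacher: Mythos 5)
First, a point of reference: the paper does not prove Theorem \ref{LLPthm1} at all --- it is quoted from Lidick\'y, Liu and Palmer \cite{LLP13} and used as a black box (e.g.\ in (\ref{kPleq})). So your proposal can only be assessed on its own merits, and it contains a genuine gap: the key lemma is false as you state it. You claim that $\delta(G)\ge b+1=\sum_i\lfloor\ell_i/2\rfloor$ forces $F\subseteq G$. Take $F=P_5\cup P_2$, so $b+1=3$, and let $G$ be a disjoint union of copies of $K_4$. Then $\delta(G)=3=b+1$, but $G$ contains no $P_5$ and hence no copy of $F$. More generally, disjoint copies of $K_{b+2}$ refute the lemma whenever $\ell_1>b+2$, e.g.\ for $F=P_{\ell_1}\cup P_2$ with any $\ell_1\ge 6$. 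The failure is quantitative and hits exactly the first step of your greedy extraction: minimum degree $b+1$ only guarantees $e(G)\ge\frac{(b+1)n}{2}$, whereas the Erd\H{o}s--Gallai threshold for forcing $P_{\ell_1}$ is $(\frac{\ell_1}{2}-1)n$, and $\frac{b+1}{2}>\frac{\ell_1}{2}-1$ amounts to $\sum_{i\ge 2}\lfloor\ell_i/2\rfloor>\lceil\ell_1/2\rceil-2$, which fails whenever the first path is long compared with the rest of the forest. So the obstruction is not only the $kP_3$ phenomenon you single out; it is generic.

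What your induction actually needs is the weaker (and true) statement that an $F$-free graph with $e(G)\ge e(H(n,F))$ has a vertex of degree at most $b$; its contrapositive keeps the hypothesis $e(G)\ge e(H(n,F))\approx bn$, which is roughly twice the edge count you retain after passing to the minimum-degree formulation, and that lost factor of two is precisely what makes the Erd\H{o}s--Gallai step viable. Proving the degree claim in that correct form is essentially the whole difficulty of the theorem, and your sketch does not supply it. Two further points are underdeveloped even granting a correct key lemma: the base case $n=n_0(F)$ of the induction is asserted to be ``handled directly'' but is itself a nontrivial extremal statement; and the ``absorption'' bookkeeping you flag as delicate is not merely delicate but is where the distinction between $kP_3$ and the other forests must actually be exploited, and no mechanism for doing so is given. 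As it stands the proposal does not constitute a proof.
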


Finally, Lidick\'y et al.~\cite{LLP13} determined the function $\textup{ex}(n,F)$, when $F$ is a star forest and $n$ is sufficiently large. Let $F=\bigcup_{i=1}^kS_{r_i}$, where $r_{1}\geq \cdots \geq r_{k}\ge 1$. To describe their result, we define a graph $G(n,F)$ as follows. Let $i,r\ge 1$, and $L$ be a graph on $n-i+1$ vertices which is an extremal graph for $S_{r}$. Thus $L$ is a near $(r-1)$-regular graph, and $e(L)=\lfloor\frac{r-1}{2}(n-i+1)\rfloor$. Let $G(n,i,r)=K_{i-1}+L$. Now, let $G(n,F)$ be any graph $G(n,i,r_i)$ where $e(G(n,i,r_i))$ is maximised over $1\le i\le k$. Note that each of $G(n,i,r_i)$ and $G(n,F)$ can be one of many possible graphs.

Observe that $G(n,i,r_i)$ is $F$-free for all $1\le i\le k$. Indeed, if $G(n,i,r_i)=K_{i-1}+L$ as defined and contains a copy of $F$, then each star $S_{r_1},\dots,S_{r_{i-1}}$ must have at least one vertex from the $K_{i-1}$, and $S_{r_i}$ is not a subgraph of $L$.

Lidick\'y et al.~\cite{LLP13} proved that the graphs $G(n,F)$ are extremal for $F$.

\begin{thm}\label{LLPthm2}\,\textup{\cite{LLP13}}
Let $k\ge 2$, and $F=\bigcup_{i=1}^kS_{r_i}$ be a star forest, where $r_{1}\geq \cdots \geq r_{k}\ge 1$ are the maximum degrees of the components. Let $n\ge n_0(F)$ be sufficiently large. We have
\begin{align*}
\textup{ex}(n,F) &= e(G(n,F))\\
&= \max_{1\leq i \leq k}\Big\{(i-1)(n-i+1)+\binom{i-1}{2}+\Big\lfloor\frac{r_{i}-1}{2}(n-i+1)\Big\rfloor\Big\}.
\end{align*}
Moreover, the extremal graphs are the graphs $G(n,F)$.
\end{thm}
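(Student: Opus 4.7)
My plan is to establish matching lower and upper bounds for $\ex(n,F)$, with the upper bound proved by induction on $k$. The lower bound follows directly from the excerpt's verification that each $G(n,i,r_i)=K_{i-1}+L$ is $F$-free: any copy of $F$ would force all $i$ of the stars $S_{r_1},\dots,S_{r_i}$ (each requiring a centre of degree $\ge r_i$, unavailable in $L$ where $\Delta(L)=r_i-1$) to have centres in $K_{i-1}$, impossible since $|K_{i-1}|=i-1$. A direct edge count gives $e(G(n,i,r_i))=\binom{i-1}{2}+(i-1)(n-i+1)+\lfloor(r_i-1)(n-i+1)/2\rfloor$, and maximising over $i$ yields $\ex(n,F)\ge e(G(n,F))$.

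For the upper bound, the base case $k=1$ is Proposition \ref{CYprp1}(b). In the inductive step, let $F'=\bigcup_{j=2}^{k}S_{r_j}$ and $C:=r_1+\sum_{j=2}^{k}(r_j+1)$, and write $T_H(j;m)$ for the above target formula applied to star forest $H$ at index $j$ on $m$ vertices. Let $G$ be an $F$-free graph on $n$ vertices with $n$ large. If $\Delta(G)\ge C$, pick $v_1$ of maximum degree: any copy of $F'$ in $G-v_1$ uses $C-r_1$ vertices, leaving $v_1$ with $\ge r_1$ neighbours for an $S_{r_1}$ disjoint from it, thereby embedding $F$ into $G$, a contradiction. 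Hence $G-v_1$ is $F'$-free, and by induction $e(G-v_1)\le e(G(n-1,F'))$. The key identity $(n-1)+T_{F'}(i';n-1)=T_{F}(i'+1;n)$ (verified by expanding both sides) then yields $e(G)\le(n-1)+e(G(n-1,F'))\le\max_{i\in\{2,\dots,k\}}T_F(i;n)\le e(G(n,F))$. If $\Delta(G)<C$ with $\Delta(G)<r_1$, then $G$ is $S_{r_1}$-free and Proposition \ref{CYprp1}(b) gives $e(G)\le T_F(1;n)\le e(G(n,F))$. If $r_1\le\Delta(G)<C$, take any $S_{r_1}\subseteq G$ on vertex set $W$; $F$-freeness forces $G-W$ to be $F'$-free, so by induction $e(G-W)\le e(G(n-r_1-1,F'))$, and the $\le(r_1+1)C$ edges incident to $W$ (constant in $n$) give $e(G)<e(G(n,F))$ for $n$ large by comparing leading-order terms.

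For the extremal graphs, I trace equality. An extremal $G$ arising from the $\Delta(G)\ge C$ case must satisfy $d_G(v_1)=n-1$ and $G-v_1$ extremal for $F'$, so inductively $G-v_1=G(n-1,i',r_{i'+1})$ for some maximising $i'$, whence $G=K_1+G(n-1,i',r_{i'+1})=G(n,i'+1,r_{i'+1})$ — of the prescribed form $G(n,F)$ with maximising index $i=i'+1\ge 2$. An extremal $G$ from $\Delta(G)<r_1$ is a near $(r_1-1)$-regular graph, i.e.\ $G(n,1,r_1)$, corresponding to the maximising index $i=1$. The intermediate case $r_1\le\Delta(G)<C$ is strictly sub-extremal. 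Together these exhaust the family $G(n,F)$.

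The main obstacle is the index-tracking between $F'$ and $F$: verifying the pair of identities $(n-1)+T_{F'}(i';n-1)=T_F(i'+1;n)$ (numerical) and $K_1+G(n-1,i',r_{i'+1})=G(n,i'+1,r_{i'+1})$ (graph-theoretic), so that the join with a dominating vertex closes the induction on the extremal family itself, not merely on the extremal edge count. Once this bookkeeping is set up cleanly, the remaining case analysis on $\Delta(G)$ is routine.
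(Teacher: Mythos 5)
The paper does not prove Theorem \ref{LLPthm2}; it is quoted from \cite{LLP13} without proof, so there is no in-paper argument to compare against. Your proposal is, as far as I can check, correct and complete in outline: the lower bound is exactly the paper's verification that each $G(n,i,r_i)$ is $F$-free, and your upper bound is the natural induction on the number of components. The key identity $(n-1)+T_{F'}(i';n-1)=T_F(i'+1;n)$ does check out (the two sides differ by $(n-i')+(i'-1)-(n-1)=0$), the three-way split on $\Delta(G)$ is exhaustive, and the middle range $r_1\le\Delta(G)<C$ is killed because $e(G(n-r_1-1,F'))\le e(G(n-1,F'))=\max_{i\ge 2}T_F(i;n)-(n-1)\le e(G(n,F))-(n-1)$, while only a constant number of edges meet the deleted copy of $S_{r_1}$. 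This is essentially the argument of Lidick\'y, Liu and Palmer. It is worth contrasting with the route the present paper takes for the degree-power analogue (Theorem \ref{SFthm}): there no induction is needed, because for $p\ge 2$ the term $(k-1)n^p$ dominates and the optimum is forced at index $k$, so one can directly locate $k-1$ vertices of huge degree and show $G\subset G(n,k,r_k)$; for $p=1$ the maximum can sit at any index $i$, which is exactly why your inductive index-tracking is the right mechanism. Two small points to make explicit when writing this up: (a) in the equality analysis of the $\Delta(G)\ge C$ case, note that equality also forces $\max_{i\ge 2}T_F(i;n)=e(G(n,F))$, so the index $i'+1$ you produce really is a maximising index for $F$ and $K_1+G(n-1,i',r_{i'+1})$ is genuinely one of the graphs $G(n,F)$; (b) the choice of $n_0(F)$ must absorb both $n_0(F')+r_1+1$ for the induction and the constant $(r_1+1)C$ in the middle case, which is routine.
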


\section{Linear and star forests}\label{linstarforestsect}
We now study the function $\textup{ex}_p(n,F)$, where $F$ is a linear forest or a star forest, $p\ge 2$, and $n$ is sufficiently large. We assume throughout this section that $F$ has at least two components, since the single component case is covered by (\ref{CYobs1}), Theorem \ref{CYthm1}, and Proposition \ref{CYprp1}.
 
We first consider the case when $F$ is a star forest. Recall that $S_r$ is the star with maximum degree $r$. Let $F=\bigcup_{i=1}^kS_{r_i}$, where $r_{1}\geq \cdots \geq r_{k}\ge 1$. Our first result is the following. It turns out that $\textup{ex}_p(n,F)$ is attained by the graphs $G(n,k,r_k)$.

\begin{thm}\label{SFthm}
Let $k,p\ge 2$, and $F=\bigcup_{i=1}^kS_{r_i}$ be a star forest, where $r_{1}\geq \cdots \geq r_{k}\ge 1$ are the maximum degrees of the components. Let $n\ge n_0(F)$ be sufficiently large. We have
\[
\textup{ex}_{p}(n,F)=e_{p}(G(n,k,r_k)).
\]
Moreover, the extremal graphs are the graphs $G(n,k,r_k)$.
\end{thm}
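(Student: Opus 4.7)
The plan is to establish matching lower and upper bounds on $e_p(G)$. The lower bound $\ex_p(n,F) \geq e_p(G(n,k,r_k))$ is immediate from the $F$-freeness of $G(n,k,r_k) = K_{k-1} + L$ (verified in Section~\ref{surveysect}), where $L$ is a near $(r_k-1)$-regular graph on $n-k+1$ vertices. A routine computation, splitting on the parity of $(r_k-1)(n-k+1)$, yields the explicit value of $e_p(G(n,k,r_k))$: essentially $(k-1)(n-1)^p + (n-k+1)(r_k+k-2)^p$, with one summand reduced to $(r_k+k-3)^p$ in the odd-parity case.

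For the upper bound, let $G$ be an $F$-free graph on $n$ vertices and set $X = \{v \in V(G) : d_G(v) \geq C\}$ for a sufficiently large constant $C = C(F)$ (for instance $C = k + 2\sum_{i=1}^{k} r_i$ will do). I would first show that $|X| \leq k-1$: if $|X| \geq k$, one embeds $F$ greedily by picking any $k$ distinct vertices of $X$ as star centres and, one centre at a time, selecting $r_i$ leaves for $v_i$ from among its $\geq C$ neighbours while avoiding the at most $k + \sum_j r_j < C$ vertices already used, contradicting $F$-freeness. If instead $|X| \leq k-2$, the crude bound $e_p(G) \leq (k-2)(n-1)^p + n C^p$ is dominated for large $n$ by the $(k-1)(n-1)^p$ term inside $e_p(G(n,k,r_k))$, and we are done.

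The main case is $|X| = k-1$. An entirely analogous greedy argument shows that $G[V \setminus X]$ must be $S_{r_k}$-free: otherwise the $k-1$ vertices of $X$ serve as centres of $S_{r_1}, \ldots, S_{r_{k-1}}$ and a copy of $S_{r_k}$ in $V \setminus X$ completes a copy of $F$. Hence $\Delta(G[V \setminus X]) \leq r_k - 1$, and every $v \in V \setminus X$ satisfies $d_G(v) \leq (k-1) + (r_k - 1) = r_k + k - 2$. The bounds $\sum_{v \in X} d_G(v)^p \leq (k-1)(n-1)^p$ and $\sum_{v \in V \setminus X} d_G(v)^p \leq (n-k+1)(r_k + k - 2)^p$ together give the right upper bound in the even-parity case. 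In the odd-parity case one more observation is needed, namely that the handshake lemma applied to $G[V \setminus X]$ forces at least one vertex there to have degree at most $r_k - 2$ inside $V \setminus X$, which trims the sum by at least $(r_k + k - 2)^p - (r_k + k - 3)^p$ and matches $e_p(G(n,k,r_k))$ exactly.

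The hardest step is expected to be this odd-parity bookkeeping together with the uniqueness claim, which requires tracing when every inequality in the upper-bound argument is simultaneously tight. Equality in $\sum_{v \in X} d_G(v)^p \leq (k-1)(n-1)^p$ forces each $v \in X$ to be joined to all of $V(G) \setminus \{v\}$, so $X$ spans a $K_{k-1}$ that is completely joined to $V \setminus X$; equality in the second sum forces $G[V \setminus X]$ to be itself a near $(r_k - 1)$-regular graph $L$. Together these yield $G = K_{k-1} + L$, which is precisely a graph of the form $G(n,k,r_k)$, completing the characterisation of extremal graphs.
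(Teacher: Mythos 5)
Your proposal is correct, and its skeleton coincides with the paper's: a crude bound when fewer than $k-1$ vertices have large degree, and otherwise the observation that the graph minus the $k-1$ high-degree vertices must be $S_{r_k}$-free (else those vertices serve as centres of $S_{r_1},\dots,S_{r_{k-1}}$ completing a copy of $F$). The only real divergence is the last step. The paper concludes from $S_{r_k}$-freeness that $G$ is a subgraph of $G(n,k,r_k)$ and hence has strictly smaller $e_p$ unless equal to it -- which quietly relies on the fact that any graph on $n-k+1$ vertices with maximum degree at most $r_k-1$ embeds into a near $(r_k-1)$-regular graph. You instead bound $\sum_v d_G(v)^p$ directly, splitting over $X$ and $V\setminus X$ and using the handshake lemma on $G[V\setminus X]$ to shave one term in the odd-parity case; this is slightly longer but more self-contained, and it makes the equality analysis (each $v\in X$ universal, $G[V\setminus X]$ near $(r_k-1)$-regular) completely explicit. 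Both routes are sound.
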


%
\begin{proof}
Since $G(n,k,r_k)$ does not contain a copy of $F$, we have $\textup{ex}_{p}(n,F)\geq e_{p}(G(n,k,r_k))$. To prove the theorem, it suffices to show that any $F$-free graph $G$ on $n$ vertices with $G\neq G(n,k,r_k)$ satisfies $e_{p}(G)<e_{p}(G(n,k,r_k))$.

It is easy to calculate that
\begin{align}
e_{p}(G(n,k,r_k)) &= \left\{ 
\begin{array}{l}
(k-1)(n-1)^{p}+(n-k+1)(r_{k}+k-2)^{p}\\
\hspace{3cm}\textup{if one of $r_{k}-1$ and $n-k+1$ is even}\\[1ex]
(k-1)(n-1)^{p}+(n-k)(r_{k}+k-2)^{p}+(r_{k}+k-3)^{p}\\
\hspace{3cm}\textup{if $r_{k}-1$ and $n-k+1$ are odd}
\end{array} \right.\nonumber\\[1ex]
&= (k-1)n^{p}+o(n^{p}).\label{epHnSasy}
\end{align}

We may assume that there exists a subset $U\subset V(G)$ of order $k-1$ such that every vertex $v\in U$ has degree $d_G(v)\ge \sum_{i=1}^kr_i+k$. Otherwise, if $G$ has at most $k-2$ such vertices, then using $p\ge 2$ and (\ref{epHnSasy}), we have
\begin{align*}
e_{p}(G) &< (k-2)n^{p}+(n-k+2)\bigg(\sum_{i=1}^kr_i+k\bigg)^p<(k-1)n^{p}+o(n^{p})\\
&= e_{p}(G(n,k,r_k)).
\end{align*}

Now, we prove that $G \subset G(n,k,r_k)$, which implies that $e_{p}(G)<e_{p}(G(n,k,r_k))$. Recall that $G(n,k,r_k)=K_{k-1}+L$, where $L$ is a graph on $n-k+1$ vertices which is an extremal graph for $S_{r_k}$. Thus by identifying $U$ with $K_{k-1}$ and $G-U$ with $L$, we have $G \subset G(n,k,r_k)$ if we can show that $G-U$ is $S_{r_k}$-free. Suppose that there is a copy of $S_{r_k}$ in $G-U$. Then, using the fact that $d_G(v)\ge \sum_{i=1}^kr_i+k$ for every $v\in U$, we can find vertex-disjoint copies of $S_{r_1},\dots,S_{r_{k-1}}$, using vertices in $U$ as their centres, and with their neighbours in $G-U$, not using the vertices of the $S_{r_k}$, as leaves. This gives a copy of $F$ in $G$, a contradiction.
\end{proof}

%

Now, we consider the case when $F$ is an arbitrary linear forest. Let $F=\bigcup_{i=1}^kP_{\ell_i}$, where $k\ge 2$ and $\ell_1\ge\cdots\ge\ell_k\ge 2$. For the case when $F=kP_3$, we can set $r_1=\cdots=r_k=2$ in Theorem \ref{SFthm} to obtain the following result, which can be considered as an extension to Theorem \ref{YZthm}.

\begin{cor}\label{SFcor}
Let $k,p\ge 2$ and $n\ge n_0(k)$ be sufficiently large. We have $\ex_p(n,kP_3)=e_p(K_{k-1}+M_{n-k+1})$. Moreover, $K_{k-1}+M_{n-k+1}$ is the unique extremal graph.
\end{cor}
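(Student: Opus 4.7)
The plan is to deduce the corollary as a direct specialization of Theorem \ref{SFthm}. The key observation is that $P_3=S_2$, since the path on three vertices is the same graph as the star with maximum degree $2$. Consequently $kP_3=\bigcup_{i=1}^{k}S_{r_i}$ with $r_1=\cdots=r_k=2$, which is exactly the form of a star forest required to apply Theorem \ref{SFthm}.

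Invoking Theorem \ref{SFthm} with these parameters gives $\textup{ex}_p(n,kP_3)=e_p(G(n,k,2))$, with the extremal graphs being precisely the graphs $G(n,k,2)$. It remains only to identify $G(n,k,2)$ with $K_{k-1}+M_{n-k+1}$. By the definition of $G(n,i,r)$ in the paragraph preceding Theorem \ref{LLPthm2}, we have $G(n,k,2)=K_{k-1}+L$, where $L$ is a near $(r_k-1)$-regular graph on $n-k+1$ vertices, i.e., a near $1$-regular graph on $n-k+1$ vertices. A near $1$-regular graph on $m$ vertices consists of a perfect matching when $m$ is even, and a near-perfect matching (a maximum matching of $\lfloor m/2\rfloor$ independent edges together with one isolated vertex) when $m$ is odd. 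In both cases this graph is unique up to isomorphism and, by the definition of $M_t$ from the introduction, coincides with $M_{n-k+1}$.

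Therefore $G(n,k,2)=K_{k-1}+M_{n-k+1}$, and the set of extremal graphs described by Theorem \ref{SFthm} collapses to the single isomorphism class $\{K_{k-1}+M_{n-k+1}\}$. This yields both the extremal value $\textup{ex}_p(n,kP_3)=e_p(K_{k-1}+M_{n-k+1})$ and the uniqueness of the extremal graph. I do not anticipate any obstacle here: once the identification $P_3=S_2$ is made, the corollary is essentially a restatement of Theorem \ref{SFthm} in the special case $r_1=\cdots=r_k=2$, and the lower bound $n_0(k)$ inherits directly from $n_0(F)$ in Theorem \ref{SFthm}.
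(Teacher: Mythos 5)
Your proposal is correct and is exactly the paper's route: the authors obtain the corollary by setting $r_1=\cdots=r_k=2$ in Theorem \ref{SFthm} (using $P_3=S_2$) and identifying $G(n,k,2)=K_{k-1}+L$ with $K_{k-1}+M_{n-k+1}$, since a near $1$-regular graph on $n-k+1$ vertices is precisely $M_{n-k+1}$ and is unique up to isomorphism. Your write-up just spells out these identifications in slightly more detail than the paper does.
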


Now, let $F\neq kP_3$. We shall prove the following result, which can be considered as an extension to Theorem \ref{LLPthm1}.

\begin{thm}\label{LFthm}
Let $k,p\ge 2$, and $F=\bigcup_{i=1}^kP_{\ell_i}$ be a linear forest, where $\ell_{1}\ge \ell_{2}\ge\cdots\ge \ell_{k}\ge 2$ and $\ell_i\neq 3$ for some $i$. Let $n\ge n_0(F)$ be sufficiently large. We have
\[
\textup{ex}_{p}(n,F)=e_{p}(H(n,F)).
\]
Moreover, $H(n,F)$ is the unique extremal graph.

In particular, if $F=kP_\ell$ and $\ell\neq 3$, then 
\[
\textup{ex}_{p}(n,kP_{\ell})=e_{p}(H(n,k,\ell)).
\]
Moreover, $H(n,k,\ell)$ is the unique extremal graph.
\end{thm}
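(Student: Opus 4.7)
Following the strategy of Theorem \ref{SFthm}, we show that any $F$-free graph $G$ on $n$ vertices with $e_p(G) \ge e_p(H(n,F))$ must equal $H(n,F)$. A direct computation gives
\[
e_p(H(n,F)) = \begin{cases}
b(n-1)^p + (n-b)b^p & \text{if some $\ell_i$ is even,}\\
b(n-1)^p + (n-b-2)b^p + 2(b+1)^p & \text{if all $\ell_i$ are odd,}
\end{cases}
\]
and hence $e_p(H(n,F)) = bn^p + O(n^{p-1})$. Since $H(n,F)$ is $F$-free, this gives the lower bound $\ex_p(n,F) \ge e_p(H(n,F))$.

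For the upper bound, fix a large constant $T = T(F) \ge b + \sum_{i=1}^k \ell_i + 1$, and let $U^* = \{v \in V(G) : d_G(v) \ge T\}$. If $|U^*| \le b-1$, then $e_p(G) \le (b-1)(n-1)^p + (n-b+1)T^p = (b-1)n^p + O(n) < e_p(H(n,F))$ for large $n$, a contradiction. Hence $|U^*| \ge b$; fix any $U \subset U^*$ with $|U| = b$. We next establish the structural claim: $G[V-U]$ has no edges if some $\ell_i$ is even, and at most one edge if all $\ell_i$ are odd. To prove this, assume otherwise and embed $F$ in $G$. In the ``some $\ell_i$ even'' case, given an edge $x_1 x_2 \in G[V-U]$, we embed $P_{\ell_i}$ as $x_1 x_2 u_1 y_1 u_2 \cdots$ using only $\lfloor \ell_i/2\rfloor - 1$ vertices of $U$; each other $P_{\ell_j}$ uses $\lfloor \ell_j/2\rfloor$ vertices of $U$, totalling $b$. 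In the ``all $\ell_i$ odd'' case, two edges in $G[V-U]$ (forming either $P_3$ or $2P_2$) allow a similar embedding of some $P_{\ell_i}$ with $\ell_i \ge 5$ odd (such an $\ell_i$ exists because $F \neq kP_3$) using $\lfloor \ell_i/2\rfloor - 1$ vertices of $U$. This claim also forces $|U^*| = b$: if some $v \in V - U$ had $d_G(v) \ge T$, then $d_{V-U}(v) \ge T - b \ge 2$ would produce a $P_3$ in $G[V-U]$, contradicting the structural bound.

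With $|U| = b$ and $G[V-U]$ as described, each $u \in U$ satisfies $d_G(u) \le n - 1$ and each $v \in V-U$ satisfies $d_G(v) \le b + d_{V-U}(v) \le b + 1$. Summing yields
\[
e_p(G) \le b(n-1)^p + \sum_{v \in V-U}(b + d_{V-U}(v))^p \le e_p(H(n,F)),
\]
with equality only if $U$ is complete to $V$ and $G[V-U]$ matches $H(n,F)$. Combined with $e_p(G) \ge e_p(H(n,F))$, we conclude $G = H(n,F)$. The special case $F = kP_\ell$ with $\ell \neq 3$ follows by substituting $\ell_1 = \cdots = \ell_k = \ell$ and observing that $H(n,F) = H(n,k,\ell)$.

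The main technical obstacle is the embedding argument used to prove the structural claim: one needs to find ``leaf'' and ``bridge'' vertices in $V - U$ with the correct adjacencies. Interior path positions may require a common neighbour of two distinct $U$-vertices, which is not guaranteed by the degree condition alone; resolving this requires a careful greedy argument, possibly using the Erd\H{o}s--Gallai-type edge bound $e(G) \le bn + O(1)$ from Theorem \ref{LLPthm1} to control the number of $V - U$ vertices that are poorly connected to $U$.
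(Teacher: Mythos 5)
Your outline matches the paper's broad strategy (a core $U$ of $b$ high-degree vertices, with $G[V-U]$ nearly empty), but the two steps you defer are exactly where the real work lies, and as stated your argument does not close them. First, taking $T=T(F)$ to be a constant is fatal for the embedding: two vertices of degree $\ge T$ need not share any neighbour, so the alternating path $u_1y_1u_2y_2\cdots$ cannot be built, and you cannot simply raise $T$ to $cn$ because then the bound $(b-1)(n-1)^p+(n-b+1)T^p$ becomes $\Theta(n^{p+1})$ and the counting step collapses. The paper escapes this circle with a genuinely different tool, Lemma \ref{CYlem2}: combining the Erd\H{o}s--Gallai/Lidick\'y-et-al.\ edge bound $e(G)\le bn$ with a convexity argument on the degree sequence forces the $b$-th largest degree to exceed $0.65n$, after which any two core vertices have $\ge 0.29n$ common neighbours and the greedy embedding goes through. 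This is not a routine patch of your argument; it is the key lemma you are missing.

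Second, your structural claim (``$G[V-U]$ has no edges if some $\ell_i$ is even'') is false for general $F$-free graphs: the graph $\bigl(K_b+E_{n-b-\ell_k+1}\bigr)\cup K_{\ell_k-1}$ is $F$-free and has many edges outside the core, because an edge both of whose endpoints have no neighbour in $U$ can never be used to start your embedding. The paper must therefore split $V-U$ into $A$ (vertices with a neighbour in the core) and $B$ (the rest); the embedding argument only kills edges meeting $A$ (Claim \ref{clm1}), while eliminating $B$ (Claim \ref{clm3}) requires an $e_p$-increasing rewiring that invokes the extremality of $G$, not just its $F$-freeness. A similar issue recurs in the all-odd case even after $B=\emptyset$: two independent edges $c_1c_2,c_3c_4$ in $G[Y]$ with $\ell_1=5$ can only be absorbed into a $P_5$ if $c_2,c_3$ have a common core neighbour, which can fail; the paper's Case 2 of Claim \ref{clm4} handles this by a degree-sequence majorisation comparison with $H(n,F)$, and Case 3 by another rewiring. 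So the proposal is not merely missing ``a careful greedy argument'' as you suggest at the end; it is missing Lemma \ref{CYlem2}, the $A/B$ decomposition, and two extremality-based switching arguments, each of which is essential.
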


Before we prove Theorem \ref{LFthm}, we first recall a lemma of Caro and Yuster \cite{CY00}.


\begin{lem}\label{CYlem2}\,\textup{\cite{CY00}}
Let $b\ge 1$ and $p\ge 2$ be integers. Let $G$ be a graph on $n$ vertices such that $e(G)\le (b+\frac{1}{2})n$. Let $d_1\ge \cdots\ge d_n$ be the degree sequence of $G$. Then, if $d_b\le 0.65n$, we have $e_p(G)\le cn^p+O(n^{p-1})$ for some constant $c$ with $0<c<b$.
\end{lem}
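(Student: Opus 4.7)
The plan is to partition $V(G) = U \cup W$ with $U = \{v_1,\dots,v_{b-1}\}$ the top $b-1$ highest-degree vertices, so that every $v \in W$ satisfies $d_G(v) \le d_b \le 0.65n$. Setting $T = \sum_{u \in U} d_G(u)$, I would first bound the $U$-contribution by $\sum_{u \in U} d_G(u)^p \le (n-1)^{p-1} T$, using $d_G(u)^p \le (n-1)^{p-1} d_G(u)$. For the $W$-contribution, writing $d_G(v) = d_{G[W]}(v) + |N(v)\cap U|$ with $|N(v)\cap U| \le b-1$, a binomial expansion combined with the tail estimates $\sum_{v\in W} d_{G[W]}(v)^k = O(n^{p-1})$ for $k < p$ (which follow from $\sum_W d_{G[W]} = O(n)$ and $\Delta(G[W])\le 0.65n$) yields
\[
\sum_{v \in W} d_G(v)^p = e_p(G[W]) + O(n^{p-1}).
\]

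The crux is then to bound $e_p(H)$ for $H := G[W]$, which satisfies $\Delta(H) \le 0.65n$ and $e(H) \le (b+\tfrac{1}{2})n - T + \binom{b-1}{2}$. I would prove a \emph{skewed-bipartite} estimate: any graph $H$ on at most $n$ vertices with $\Delta(H) \le \delta n$ and $e(H) \le \lambda n$ satisfies $e_p(H) \le \lambda\,\delta^{p-1}\,n^p + O(n^{p-1})$, with the bound asymptotically attained by $K_{\lfloor\lambda/\delta\rfloor,\,\lfloor\delta n\rfloor}$. Applying this with $\delta = 0.65$ and $\lambda = (b+\tfrac{1}{2}) - T/n$ and collecting terms gives
\[
e_p(G) \le T\,n^{p-1}\bigl(1 - 0.65^{p-1}\bigr) + (b+\tfrac{1}{2})\cdot 0.65^{p-1}\,n^p + O(n^{p-1}).
\]
The coefficient of $T$ is positive for $p\ge 2$, so the right-hand side is maximised at $T = (b-1)(n-1)$, yielding $e_p(G) \le \bigl[(b-1) + \tfrac{3}{2}\cdot 0.65^{p-1}\bigr]n^p + O(n^{p-1})$. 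Since $0.65^{p-1} \le 0.65 < 2/3$ for every integer $p\ge 2$, the constant $c := (b-1) + \tfrac{3}{2}\cdot 0.65^{p-1}$ satisfies $0 < c < b$, as required.

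The main obstacle will be establishing the skewed-bipartite bound, which improves the naive estimate $e_p(H) \le \Delta(H)^{p-1}\sum d_H = 2\lambda\delta^{p-1}n^p$ by a crucial factor of two: the whole argument works precisely because $\tfrac{3}{2}\cdot 0.65 < 1$, and the constant $0.65$ in the hypothesis appears to be tuned so that this slim margin holds. The key lever is graphical realisability: if $k$ vertices of $H$ have degree $\ge \alpha n$, then edge-counting gives $k\alpha n - \binom{k}{2} \le e(H) \le \lambda n$, forcing $k = O(1)$ whenever $\alpha$ is a fixed positive constant. Consequently the ``concentrated'' degree sequence that would saturate the naive bound is not graphical, and a dyadic-threshold analysis---splitting vertices of $H$ by degree into $O(\log n)$ strata and summing contributions stratum by stratum---forces the extremal configuration to be (asymptotically) a near-$K_{\lambda/\delta,\,\delta n}$ bipartite graph, delivering the factor-two saving and hence the lemma.
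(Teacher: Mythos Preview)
The paper does not prove this lemma: it is quoted from Caro--Yuster \cite{CY00} with the remark that it ``can be seen easily in the proof of Lemma~3.5'' there, so there is no in-paper argument to compare against.

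Your strategy is sound and does establish the lemma. The split into $U$ (the top $b-1$ degrees) and $W$, the reduction of $\sum_{v\in W}d_G(v)^p$ to $e_p(G[W])$ via the binomial expansion, and the final linear optimisation over $T$ are all correct. The crux is indeed your ``skewed-bipartite'' estimate
\[
e_p(H)\le \lambda\,\delta^{p-1}n^p+\text{(lower order)}
\]
for graphs $H$ with $\Delta(H)\le\delta n$ and $e(H)\le\lambda n$, and this is true. The key inequality you need is that the top $k$ degrees of $H$ satisfy $\sum_{i\le k}d_i\le\binom{k}{2}+e(H)$ (the top $k$ vertices span at most $\binom{k}{2}$ internal edges, and each external edge contributes once to the left and once to $e(H)$); this is exactly your ``$k\alpha n-\binom{k}{2}\le e(H)$'' observation. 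A single threshold at $K\asymp n^{p/(p+1)}$, splitting edges according to how many endpoints lie in $\{v:d_H(v)>K\}$ and using $\sum_{v:d_H(v)>K}d_H(v)\le\binom{|A|}{2}+e(H)$ for the high--high and high--low contributions, already yields $e_p(H)\le e(H)\,\Delta(H)^{p-1}+O\bigl(n^{(p^2+1)/(p+1)}\bigr)$, so the factor-two saving is real and your dyadic refinement is not even strictly necessary.

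One caveat: the error you obtain this way is $o(n^p)$ but not obviously $O(n^{p-1})$ (for $p=2$ the single-threshold bound gives $O(n^{5/3})$, for instance). This is harmless for the lemma as stated: once you have $e_p(G)\le c_0 n^p+o(n^p)$ with $c_0=(b-1)+\tfrac{3}{2}\cdot 0.65^{p-1}<b$, choose any $c\in(c_0,b)$ and absorb the $o(n^p)$ into $(c-c_0)n^p$ to get $e_p(G)\le cn^p$ for large $n$. Just flag this adjustment rather than asserting $O(n^{p-1})$ directly out of the skewed-bipartite step.
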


Although Lemma \ref{CYlem2} is not stated explicitly in \cite{CY00}, it can be seen easily in the proof of Lemma 3.5 in \cite{CY00}.

\begin{proof}[Proof of Theorem \ref{LFthm}] 
Since $H(n,F)$ is $F$-free, we have $\textup{ex}_p(n,F)\ge e_p(H(n,F))$. Hence, it suffices to show that any $F$-free graph $G$ on $n$ vertices with $G\neq H(n,F)$ has $e_{p}(G)<e_{p}(H(n,F))$. Assume the contrary, and let $G$ be an $F$-free graph on $n$ vertices, that is maximal in the sense that $e_{p}(G)=\textup{ex}_{p}(n,F)\ge e_p(H(n,F))$ and $G \neq H(n,F)$.

Let $b=\sum_{i=1}^{k}\lfloor\frac{\ell_{i}}{2}\rfloor-1\ge 1$. By the definition of $H(n,F)$, it is easy to calculate that
\begin{align}
e_{p}(H(n,F)) &= \left\{ 
\begin{array}{ll}
b(n-1)^{p}+(n-b-2)b^p+2(b+1)^{p} & \textup{if all $\ell_i$ are odd}\\
b(n-1)^{p}+(n-b)b^p & \textup{otherwise}
\end{array} 
\right.\nonumber\\[1ex]
&= bn^{p}+o(n^{p}).\label{ep(HnF)eq}
\end{align}

According to Theorem \ref{LLPthm1}, we have
\begin{equation}\label{kPleq}
e(G)\leq {b\choose 2}+b(n-b)+1\le bn.
\end{equation}

Let $d_1\ge\cdots\ge d_n$ be the degree sequence of $G$. Let $X\subset V(G)$ be the set of vertices with degrees $d_1,\dots,d_b$, and $Y=V(G)\setminus X$. By Lemma \ref{CYlem2} and using (\ref{ep(HnF)eq}) and (\ref{kPleq}), we may assume that $d_{b}>0.65n$. Let $A\subset Y$ be the set of vertices that have a neighbour in $X$, and $B=Y\setminus A$. Note that any two vertices $u,v\in X$ have at least $2(0.65n-1)-(n-2)=0.3n$ common neighbours in $G$, and hence at least $0.3n-(b-2)>0.29n$ common neighbours in $A$. This means that for any set $Y'\subset Y$ with $|Y'|$ depending only on $\ell_1,\dots,\ell_k$ (and hence $|Y'|$ is much smaller than $n$), $u$ and $v$ have a common neighbour in $A\setminus Y'$. Likewise, any vertex of $X$ has a neighbour in $A\setminus Y'$. We now prove a series of claims.

\begin{claim}\label{clm1}
If $\ell_i$ is odd for all $i$, then $G[Y]$ does not contain a copy of $P_3$ with an end-vertex in $A$. If $\ell_i$ is even for some $i$, then $G[Y]$ does not contain an edge with an end-vertex in $A$.
\end{claim}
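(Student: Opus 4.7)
The plan is to prove both statements by contradiction: assume that $G[Y]$ contains the forbidden structure and construct a copy of $F$ in $G$, contradicting $F$-freeness. The central observation is that any ``standard'' alternating embedding of $F$ between $X$ and $A$ would need $\sum_i \lfloor \ell_i/2 \rfloor = b+1$ vertices from $X$, but only $|X| = b$ are available, so a valid embedding must save exactly one $X$-vertex in one component. The forbidden structure in $G[Y]$, together with any edge from $a \in A$ to some $x^{\ast} \in X$, is precisely what enables this saving.

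In Case 1 (all $\ell_i$ odd), let $a - y_1 - y_2$ be the given $P_3$ with $a \in A$, and pick $x^{\ast} \in X$ adjacent to $a$. I would embed one component $P_{\ell_k}$ as the walk
\[
y_2 - y_1 - a - x^{\ast} - z_1 - x_{i_1} - z_2 - x_{i_2} - \cdots
\]
of length $\ell_k$, alternating between $X$ and $A$ after the initial $P_3$; this uses $\tfrac{\ell_k-1}{2} - 1$ vertices of $X$ (and none at all in the degenerate case $\ell_k = 3$, where the $P_3$ itself already has the right length). Embed each of the other $k-1$ components as a standard alternating path $z - x - z - \cdots - z$, using $\tfrac{\ell_i-1}{2}$ vertices of $X$ each. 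The total $X$-usage is $\sum_i \tfrac{\ell_i-1}{2} - 1 = b = |X|$. Case 2 (some $\ell_{i_0}$ even) is identical in spirit: with the given edge $a - y_1$ in place of the $P_3$, embed $P_{\ell_{i_0}}$ as $y_1 - a - x^{\ast} - z_1 - x_{i_1} - \cdots$ using $\tfrac{\ell_{i_0}}{2} - 1$ vertices of $X$ (or none if $\ell_{i_0} = 2$, when the edge is itself the component), and embed the other components in the standard alternating way; the $X$-usage again totals $\sum_i \lfloor \ell_i/2 \rfloor - 1 = b$.

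Once the $X$-vertices are assigned, each internal $z$-vertex on a path must lie in the common $A$-neighborhood of its two adjacent $X$-vertices, and each $z$-endpoint must be a neighbor of one specific $X$-vertex. From $d_b > 0.65n$, any two vertices of $X$ share more than $0.29n$ common neighbors in $A$, and only $O(|V(F)|)$ $A$-vertices are used in the whole construction, so a fresh valid choice of $z$ is always available for $n$ sufficiently large. Carrying out the greedy construction therefore produces a copy of $F$ in $G$, giving the desired contradiction.

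The main obstacle is bookkeeping rather than anything deep: one must verify that the $X$-count balances to exactly $b$ in every parity subcase, and carefully handle the degenerate situations ($\ell_k = 3$ or $\ell_{i_0} = 2$) where the given structure in $G[Y]$ already spans a full component of $F$ and $x^{\ast}$ is not needed in it. Given the large common neighborhoods in $A$, the greedy extension step itself is entirely routine.
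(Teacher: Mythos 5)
Your proof is correct and follows essentially the same strategy as the paper's: both exploit the forbidden structure in $G[Y]$ to save exactly one vertex of $X$ in a single component of $F$, and then greedily embed the remaining components alternately between $X$ and $A$ using the large common neighbourhoods guaranteed by $d_b>0.65n$. The paper merely packages the construction as one long path on $2b+3$ (resp.\ $2b+2$) vertices that is cut into segments and extended at their $X$-ends, but the underlying counting and embedding are identical to yours.
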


\begin{proof}
Suppose first that $\ell_i$ is odd for all $i$, and that $G[Y]$ contains a path $c_1c_2a_1$ with $a_1\in A$. Let $y_1\in X$ be a neighbour of $a_1$, and $y_2\in X\setminus\{y_1\}$. Then, $y_1$ and $y_2$ have a common neighbour $a_2\in A\setminus\{c_1,c_2,a_1\}$. Repeating this procedure, we can obtain a path $c_1c_{2}a_{1}y_{1}a_{2}y_{2}\ldots y_{b-1}a_{b}y_{b}a_{b+1}$, where $X=\{y_1,\dots,y_b\}$ and $a_2,a_3,\dots,a_{b+1}\in A$. This path has $2b+3=\ell_1+\sum_{i=2}^k(\ell_i-1)$ vertices, and so it contains vertex-disjoint paths $P_{\ell_1}, P_{\ell_2-1},\dots, P_{\ell_k-1}$ with $c_1$ in the $P_{\ell_1}$. Note that each of the paths $P_{\ell_2-1},\dots, P_{\ell_k-1}$ has an end-vertex in $X$, and so we can extend each $P_{\ell_i-1}$ to $P_{\ell_i}$ by taking a neighbour of the end-vertex in $X$. By choosing the $k-1$ neighbours to be distinct vertices in $A\setminus\{c_1,c_2,a_1,\dots,a_{b+1}\}$, we obtain a copy of $F$ in $G$, a contradiction.

Now, let $Q=\{1\le i\le k:\ell_i$ is even$\}\neq\emptyset$, and suppose that $G[Y]$ contains an edge $ca_1$ with $a_1\in A$. As before, we can obtain a path $ca_1y_1a_2y_2\dots y_{b-1}a_by_ba_{b+1}$, where $X=\{y_1,\dots,y_b\}$ and $a_2,a_3,\dots,a_{b+1}\in A\setminus\{c,a_1\}$. This path has $2b+2=\sum_{i\not\in Q}(\ell_i-1)+\sum_{i\in Q}\ell_i$ vertices. We obtain vertex-disjoint paths $P_{\ell_i-1}$ for $i\not\in Q$, and $P_{\ell_i}$ for $i\in Q$, such that the path using $c$ is $P_{\ell_j}$, for some $j\in Q$. Extending each $P_{\ell_i-1}$ to $P_{\ell_i}$ for $i\not\in Q$, we again have a copy of $F$ in $G$, a contradiction.
\end{proof}

\begin{claim}\label{clm2}
$G[B]$ does not contain a copy of $P_{\ell_k}$.
\end{claim}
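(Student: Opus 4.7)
The plan is to suppose for contradiction that $G[B]$ contains a copy $P^*$ of $P_{\ell_k}$, and then to construct vertex-disjoint paths $P_{\ell_1}, \ldots, P_{\ell_{k-1}}$ in the subgraph induced by $X \cup A$, all disjoint from $P^*$. Since $B \subseteq Y$ and $B \cap A = \emptyset$, the path $P^*$ uses no vertex of $X \cup A$, so the union of all these paths would yield a copy of $F$ in $G$, contradicting the $F$-freeness of $G$.

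The combinatorial heart of the construction is the observation that
\[
|X| = b = \sum_{i=1}^{k}\Big\lfloor\frac{\ell_i}{2}\Big\rfloor - 1 \ge \sum_{i=1}^{k-1}\Big\lfloor\frac{\ell_i}{2}\Big\rfloor,
\]
since $\ell_k \ge 2$ forces $\lfloor \ell_k/2 \rfloor \ge 1$. I would therefore partition (a subset of) $X$ into groups $X_1, \ldots, X_{k-1}$ with $|X_i| = \lfloor \ell_i/2 \rfloor$, and build each $P_{\ell_i}$ as an alternating path between the vertices of $X_i$ and freshly chosen vertices of $A$. Explicitly, writing $X_i = \{y_1, \ldots, y_{m_i}\}$ with $m_i = \lfloor \ell_i/2 \rfloor$, I would pick common neighbors $a_j \in A$ of the consecutive pairs $(y_j, y_{j+1})$, and then extend at one end (when $\ell_i$ is even) or at both ends (when $\ell_i$ is odd) by a further neighbor of the endpoint in $A$. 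This yields a $P_{\ell_i}$ using exactly $m_i$ vertices of $X$ and $\lceil \ell_i/2 \rceil$ vertices of $A$.

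At each step of the greedy choice, the only vertices of $A$ to be avoided are those already used in previous paths together with $V(P^*)$; this forbidden set has size at most $\sum_{i=1}^k \ell_i$, a constant independent of $n$. Since any two vertices of $X$ have at least $0.29n$ common neighbors in $A$, and each vertex of $X$ has at least $0.65n - (b-1)$ neighbors in $A$ individually, a fresh vertex of $A$ is always available for $n$ sufficiently large. I expect the only real obstacle to be the bookkeeping of indices; the essential structural fact making the argument run is the inequality $\lfloor \ell_k/2 \rfloor \ge 1$, which alone guarantees that $X$ is just large enough to accommodate the $X$-skeletons of all $k-1$ paths other than $P_{\ell_k}$.
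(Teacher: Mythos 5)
Your proposal is correct and follows essentially the same approach as the paper: both arguments exploit the inequality $b=\sum_{i=1}^{k}\lfloor\ell_i/2\rfloor-1\ge\sum_{i=1}^{k-1}\lfloor\ell_i/2\rfloor$ (i.e.\ $\lfloor\ell_k/2\rfloor\ge 1$) to embed the remaining $k-1$ paths as alternating $X$--$A$ paths, greedily choosing fresh common neighbours in $A$. The only cosmetic difference is that the paper builds a single alternating path on all $2b$ vertices of $X$ and a matching set of $A$-vertices and then cuts it into the required subpaths, whereas you partition $X$ up front and build each $P_{\ell_i}$ separately; the counting and the availability argument are identical.
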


\begin{proof}
Suppose that $G[B]$ contains a copy of $P_{\ell_k}$. Let $Q_0=\{1\le i\le k-1:\ell_i$ is even$\}$ and $Q_1=\{1\le i\le k-1:\ell_i$ is odd$\}$. We can find a path $a_{1}y_{1}a_{2}y_{2}\ldots a_{b}y_{b}$, where $X=\{y_1,\dots,y_b\}$ and $a_1,\dots,a_b\in A$. This path has $2b\ge\sum_{i\in Q_0}\ell_i+\sum_{i\in Q_1}(\ell_i-1)$ vertices, and hence contains vertex-disjoint paths $P_{\ell_i}$ for $i\in Q_0$, and $P_{\ell_i-1}$ for $i\in Q_1$. Extending each $P_{\ell_i-1}$ to $P_{\ell_i}$ for $i\in Q_1$, we have a copy of $F$ in $G$, a contradiction.
\end{proof}

\begin{claim}\label{clm3} 
$B=\emptyset$.
\end{claim}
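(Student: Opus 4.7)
The plan is to derive a contradiction with the extremality of $G$ by constructing an $F$-free graph $G^*$ on the same vertex set with $e_p(G^*)>e_p(G)$. Assume $B\ne\emptyset$ and define $G^*$ from $G$ by first deleting every edge incident to $B$ and then joining each $v\in B$ to every vertex of $X$. In $G^*$, the neighbourhood of each $v\in B$ is exactly $X$.

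To verify that $G^*$ is $F$-free, suppose that $F^*=\bigcup_{i=1}^k P_{\ell_i}^*\subseteq G^*$, and consider any $v\in B\cap V(F^*)$. The at most two neighbours of $v$ on its path in $F^*$ lie in $X$, since $N_{G^*}(v)=X$. As noted in the paragraph before Claim 1, any two vertices of $X$ share at least $0.29n$ common $G$-neighbours in $A$, so, avoiding the constant-sized set $V(F^*)$, I may substitute the members of $B\cap V(F^*)$ one at a time by distinct vertices $u\in A\setminus V(F^*)$ with the same $X$-adjacencies. The resulting copy of $F$ lies in $G[X\cup A]\subseteq G$, contradicting the $F$-freeness of $G$.

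To see that $\Delta:=e_p(G^*)-e_p(G)>0$, decompose
\[
\Delta=|B|b^p-\sum_{v\in B}d_G(v)^p+\sum_{a\in A}\bigl[(d_G(a)-d_{AB}(a))^p-d_G(a)^p\bigr]+\sum_{x\in X}\bigl[(d_G(x)+|B|)^p-d_G(x)^p\bigr],
\]
where $d_{AB}(a)=|N_G(a)\cap B|$. The $X$-sum is at least $p|B|\sum_{x\in X}d_G(x)^{p-1}\ge pb|B|d_b^{p-1}$. In the case where some $\ell_i$ is even, Claim 1 gives $d_{AB}(a)=0$ for all $a\in A$, so the $A$-sum vanishes; in the all-odd case, Claim 1 implies that any $v\in B$ has at most one $A$-neighbour (a second one would produce a forbidden $P_3$ with both endpoints in $A$), hence $e(A,B)\le|B|$ and the $A$-loss is at most $p|B|d_b^{p-1}$. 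Since $\max_{a\in A}d_G(a)\le d_b$ and $b\ge 2$, the $X$-gain exceeds the $A$-loss by a factor of at least $b$, leaving a surplus of at least $(b-1)p|B|(0.65n)^{p-1}$. The $B$-loss $\sum_{v\in B}d_G(v)^p$ is bounded, via Claim 2 and Theorem \ref{CYthm1} applied to the $P_{\ell_k}$-free graph $G[B]$ (plus an $O(|B|)$ term absorbing the at most one $A$-neighbour of each $v$ in the odd case), by $O\bigl((\lfloor\ell_k/2\rfloor-1)|B|^p\bigr)$. Using the a priori estimate $|B|\le 0.35n$ (which follows from $\sum_{x\in X}d_G(x)\ge 0.65bn$ and the edge bound in Theorem \ref{LLPthm1}) together with $b-1\ge 2(\lfloor\ell_k/2\rfloor-1)$ (from $\ell_1\ge\ell_k$ and $k\ge 2$), the surplus dominates the $B$-loss for every $p\ge 2$ and every $n\ge n_0(F)$, yielding $\Delta>0$, the desired contradiction.

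The main obstacle is this final quantitative estimate: the surplus $(b-1)p|B|(0.65n)^{p-1}$ must be shown to strictly beat the $B$-loss uniformly in $p$. The key inequality $(b-1)/(\lfloor\ell_k/2\rfloor-1)\ge 2$, coupled with the geometric factor $(0.65/0.35)^{p-1}\ge 1$ separating $d_b$ from $|B|$, supplies the required margin and is where the hypothesis $F\ne kP_3$ is used (to guarantee $b\ge 2$ in the all-odd case).
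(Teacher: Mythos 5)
Your proposal is correct in outline but takes a genuinely different route from the paper. The paper does not move all of $B$ at once: it uses Claim \ref{clm2} together with Theorem \ref{EGthm1} only to extract a \emph{single} vertex $v\in B$ of degree at most $\ell_k-2$ in $G[B]$ (hence, by Claim \ref{clm1}, either an isolated-except-for-one-$A$-neighbour vertex or a vertex of bounded degree inside $B$), reattaches that one vertex to $X$, and compares $e_p$; the bounded degree of the moved vertex makes all loss terms trivially $O(n^{p-1})$ with explicit constants. Your bulk reattachment of all of $B$ is workable and arguably more natural, but it forces you to control $\sum_{v\in B}d_G(v)^p$, which is a strictly heavier task, and this is where your write-up has repairable but real gaps. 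First, your surplus $(b-1)p|B|(0.65n)^{p-1}$ relies on ``$b\ge 2$'', which fails for $F=2P_2$ and $F=P_3\cup P_2$ (there $b=1$); in those cases the argument survives only because $\ell_k=2$ forces $B$ to consist of isolated vertices, a case you should treat separately (as the paper does for $\ell_k=2$). Second, invoking Theorem \ref{CYthm1} on $G[B]$ requires $|B|\ge n_0(\ell_k)$; for smaller $|B|$ you need a separate crude bound, and you must check it is uniform in $p$. Third, the claim that the $A$-neighbours contribute only an additive $O(|B|)$ to $\sum_{v\in B}d_G(v)^p$ is not justified as written, since $(d+1)^p\ne d^p+O(1)$; it becomes correct once you observe the stronger consequence of Claim \ref{clm1} that a vertex of $B$ with a neighbour in $A$ has \emph{no other neighbours at all}, so such vertices contribute exactly $1$ each. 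With these three points patched, your comparison $2p(0.65/0.35)^{p-1}(\lfloor\ell_k/2\rfloor-1)$ versus $(\lfloor\ell_k/2\rfloor-1)(1+o(1))$ does close uniformly in $p\ge 2$, so the approach is sound; but as it stands the quantitative half is an argument sketch rather than a proof, whereas the paper's single-vertex move avoids all of these estimates.
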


\begin{proof}
Suppose that $B\neq\emptyset$. If $\ell_k=2$, then note that Claims \ref{clm1} and \ref{clm2} imply that $G[Y]$ does not contain any edges. This means that $G$ is a subgraph of $H(n,F)$, and we have $e_p(G)<e_p(H(n,F))$, a contradiction. Now, let $\ell_k\ge 3$. We will derive a contradiction by constructing a new $F$-free graph $G'$ such that $e_{p}(G')>e_{p}(G)$.

Note that by Claim \ref{clm1}, every vertex of $B$ has at most one neighbour in $G$ lying in $A$. Since Claim \ref{clm2} implies that $G[B]$ is $P_{\ell_k}$-free, by Theorem \ref{EGthm1}, $G[B]$ contains at most $(\frac{\ell_k}{2}-1)|B|$ edges. Hence, there exists a vertex $v\in B$ with at most $\ell_k - 2$ neighbours in $G[B]$. Now in $G$, in view of Claim \ref{clm1}, one of the following holds.
\begin{enumerate}
\item[(i)] $d_G(v)=1$, with the only neighbour of $v$, say $u$, lying in $A$.
\item[(ii)] $0\le d_G(v)\le \ell_k-2$, with all neighbours of $v$ lying in $B$.
\end{enumerate}

Delete all edges adjacent to $v$ in $G$, connect $v$ to all vertices of $X$, and denote the new graph by $G'$. We claim that $G'$ is also $F$-free. Indeed, if $G'$ contains a copy of $F$, then exactly one path of $F$, say $P_{\ell_j}$, must use an edge $vy_1$, for some $y_1\in X$. If $v$ is not an end-vertex of such a $P_{\ell_j}$, then the $P_{\ell_j}$ also contains another neighbour $y_2\in X$ of $v$. We can find a common neighbour $v'\in A$ of $y_1$ and $y_2$ in $G$ which is not used in the copy of $F$. If $v$ is an end-vertex of the $P_{\ell_j}$, then we take $v'\in A$ to be any neighbour of $y_1$ not in the copy of $F$. Replacing $v$ with $v'$ on the $P_{\ell_j}$, we obtain a copy of $F$ in $G$, a contradiction. 

We now show that $e_p(G') > e_p(G)$. Consider the effect of the transformation from $G$ to $G'$ on the degree sequence. The degrees of the vertices of $X$ have increased by one. The degree of $v$ has not decreased, since $d_{G'}(v)-d_G(v)\ge b-(\ell_k -2)\ge (2\lfloor\frac{\ell_k}{2}\rfloor-1)-(\ell_k-2)\ge 0$. The degrees of the neighbours of $v$ in $G$ have decreased by $1$. Since every vertex of $X$ has degree at least $0.65n$, the total increase in $e_p(G') - e_p(G)$ contributed by the vertices of $X$ is at least
\[
b((0.65n + 1)^p - (0.65n)^p) = bp(0.65n)^{p-1} + o(n^{p-1}).
\]

Also, $d_b > 0.65n$ implies that $|B| < 0.35n$. If (i) holds, then Claim \ref{clm1} implies that in $G$, $u$ has no neighbours in $A$, and hence $d_G(u)<0.35n+b<0.36n$. The decrease in $e_p(G') - e_p(G)$ contributed by $u$ is at most
\[
(0.36n)^p-(0.36n-1)^p=p(0.36n)^{p-1}+o(n^{p-1}).
\]

Suppose that (ii) holds. Then in $G$, every neighbour of $v$ (lying in $B$) cannot have a neighbour in $A$, in view of Claim \ref{clm1}. Hence, every neighbour of $v$ has degree at most $0.35n$ in $G$. The total decrease in $e_p(G') - e_p(G)$ contributed by the neighbours of $v$ is at most
\[
(\ell_k -2)((0.35n)^p - (0.35n - 1)^p) \le (\ell_k - 2)p(0.36n)^{p-1} + o(n^{p-1}).
\]

Hence,
\[
e_p(G') - e_p(G)\ge p(b(1.8)^{p-1} - \ell_k + 2)(0.36)^{p-1}n^{p-1} + o(n^{p-1}) > 0.
\]
\end{proof}

By Claim \ref{clm3}, we may assume that $A=Y$ for the rest of the proof.

\begin{claim}\label{clm4}  
If $\ell_i$ is odd for all $i$, then $G[Y]$ contains at most one edge. If $\ell_i$ is even for some $i$, then $G[Y]$ does not contain an edge.
\end{claim}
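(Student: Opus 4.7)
Part 1 of Claim~\ref{clm4} is immediate from Claims~\ref{clm1} and~\ref{clm3}: since $A=Y$, every edge of $G[Y]$ has an end-vertex in $A$, which Claim~\ref{clm1} forbids. So I focus on the second part, where all $\ell_i$ are odd. Claim~\ref{clm1} combined with $A=Y$ forces $G[Y]$ to be $P_3$-free, hence a matching. I argue by contradiction and assume $G[Y]$ contains two edges $a_1a_2$ and $a_3a_4$. Observe that $\ell_1 \ge 5$: otherwise $\ell_1 = 3$ would force all $\ell_i = 3$, contradicting $F \ne kP_3$. The plan is to try to embed $F$ directly in $G$ using both $Y$-$Y$ edges; either the embedding succeeds (contradicting $F$-freeness) or its failure forces such a strong adjacency restriction on $a_1,\dots,a_4$ that $e_p(G) < e_p(H(n,F))$, contradicting the maximality of $G$.

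For the embedding I would realise $P_{\ell_1}$ as a path $a_1a_2\,y_1c_1y_2c_2\cdots y_t\,a_3a_4$ with $t=(\ell_1-3)/2$, picking distinct $y_1,\dots,y_t \in X$ such that $y_1 \in N(a_2)$ and $y_t \in N(a_3)$ (possibly after reversing one or both of the $Y$-$Y$ edges), with each $c_j \in A$ a common neighbour of $y_j,y_{j+1}$. The remaining paths $P_{\ell_i}$, $i \ge 2$, are then constructed as pure alternating paths using the leftover $X$-vertices and fresh $A$-vertices; all the common neighbours exist because any two vertices of $X$ share at least $0.29n$ common neighbours in $A$. The arithmetic $(\ell_1-3)/2 + \sum_{i \ge 2}(\ell_i-1)/2 = b$ confirms that exactly the $b$ vertices of $X$ are used. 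The embedding fails only when we cannot pick suitable $y_1,y_t$, and this reduces to the following bad cases: (i)~$\ell_1 = 5$ and $N(a_i) \cap N(a_j) \cap X = \emptyset$ for every $i \in \{1,2\}$, $j \in \{3,4\}$; or (ii)~$\ell_1 \ge 7$ and $N(a_1) \cap X = N(a_2) \cap X = N(a_3) \cap X = N(a_4) \cap X = \{y_0\}$ for a single $y_0 \in X$.

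In the bad case I then derive a contradiction through an $e_p$ estimate. In (i), every $y \in X$ is non-adjacent to at least two of $a_1,\dots,a_4$, so $d_G(y) \le n-3$; in (ii), every $y \in X \setminus \{y_0\}$ is non-adjacent to all four of them, so $d_G(y) \le n-5$. Moreover, since $G[Y]$ is a matching, $d_G(v) \le b+1$ for every $v \in Y$. Combining these bounds and comparing with $e_p(H(n,F)) = b(n-1)^p + (n-b-2)b^p + 2(b+1)^p$, the dominant saving $2pb\,n^{p-1}$ in case~(i), or $4p(b-1)\,n^{p-1}$ in case~(ii), dominates the loss $\approx np\,b^{p-1}$ coming from the possibly larger $Y$-vertex contributions, because $b \ge 2$ (the value $b=1$ occurs only for $F = 2P_3$, which is excluded). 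This yields $e_p(G) < e_p(H(n,F))$ for large $n$, the desired contradiction. The main obstacle I foresee is verifying the bad-case characterisation rigorously—in particular, ruling out alternative embedding strategies such as placing a $Y$-$Y$ edge in the interior of $P_{\ell_1}$, or exploiting a possible edge of $G[X]$—and pushing the $e_p$ inequality through uniformly for all $p \ge 2$ and $b \ge 2$.
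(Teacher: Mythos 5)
Your proposal is correct and shares the paper's central idea: seat the two matching edges of $G[Y]$ at the two ends of a copy of $P_{\ell_1}$, split on $\ell_1=5$ (a common $X$-neighbour of the two inner endpoints is needed) versus $\ell_1\ge 7$ (two distinct $X$-neighbours suffice), and check that the remaining paths consume exactly the $b$ vertices of $X$. Where you genuinely diverge is in killing the two degenerate configurations. For $\ell_1=5$ the paper works with the \emph{global} failure condition (no two vertices from distinct matching edges share an $X$-neighbour), observes that the neighbourhood sets $\Gamma_i\subset X$ of the matching edges are pairwise disjoint, and concludes that the degree sequence of $H(n,F)$ strictly majorises that of $G$; you instead use only the two fixed edges and the cruder bound $d_G(y)\le n-3$ for every $y\in X$, trading a saving of order $2pb\,n^{p-1}$ against a $Y$-side loss of at most $np(b+1)^{p-1}$, which goes through for large $n$. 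For $\ell_1\ge 7$ the paper performs a local rewiring (delete the matching edges, join a second vertex of $X$ to all matched vertices, and verify the new graph is still $F$-free) to produce a graph with strictly larger $e_p$; your direct estimate $d_G(y)\le n-5$ for $y\in X\setminus\{y_0\}$ avoids that $F$-freeness check entirely and is, if anything, simpler. Two remarks: the ``obstacle'' you flag at the end is not real --- you need not rule out alternative embeddings, since your fixed embedding either succeeds or its failure is, by construction, exactly one of your two structural cases, each of which already yields the $e_p$ deficit; and your ``Part 1''/``second part'' labels are swapped relative to the claim's wording, though the mathematics is unambiguous.
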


\begin{proof}
The latter assertion follows immediately from Claim \ref{clm1} and the fact that $A=Y$. Now, assume that $\ell_i$ is odd for all $i$. Then, since we do not have $\ell_1=\cdots=\ell_k=3$, we have $\ell_1\ge 5$. Assuming the contrary, Claim \ref{clm1} implies that the subgraph $G[Y]$ is a set of at least two independent edges and isolated vertices. We consider three cases.\\[1ex]
\noindent\emph{Case 1.} \emph{
\begin{itemize}
\item Either $\ell_1=5$, and there are two edges $c_1c_2, c_3c_4$ in $G[Y]$, with $c_2,c_3$ having a common neighbour in $X$;
\item Or $\ell_1\ge 7$, and there are two edges $c_1c_2,c_3c_4$ in $G[Y]$, with $c_2,c_3$ having distinct neighbours in $X$.
\end{itemize}}

Let $q=\lfloor\frac{\ell_1}{2}\rfloor-1$. For $\ell_1=5$, let $c_2,c_3$ have a common neighbour $y_1\in X$, so that $c_1c_2y_1c_3c_4$ is a copy of $P_5$. For $\ell_1\ge 7$, let $c_2,c_3$ have distinct neighbours $y_1,y_q\in X$. Then, as before, we can find a copy of $P_{\ell_1}$ in the form $c_1c_2y_1a_2y_2\dots a_qy_qc_3c_4$, where $y_2,\dots,y_{q-1}\in X$ and $a_2,\dots,a_q\in Y$. In both cases, we can again find a path $a_{q+1}y_{q+1}a_{q+2}y_{q+2}\dots a_by_b$, where $X=\{y_1,\dots,y_b\}$ and $a_{q+1},a_{q+2},\dots,a_{b}\in Y\setminus\{c_1,c_2,c_3,c_4,a_2,\dots,a_q\}$. This path has $2(b-q)=\sum_{i=2}^k(\ell_i-1)$ vertices, and so contains vertex-disjoint copies of $P_{\ell_2-1},\dots,P_{\ell_k-1}$. As before, we can extend these to copies of $P_{\ell_2},\dots,P_{\ell_k}$ so that we have a copy of $F$ in $G$, a contradiction.\\[1ex]
\noindent\emph{Case 2.} \emph{$\ell_1=5$, and no two vertices from distinct edges in $G[Y]$ have a common neighbour in $X$.}\\[1ex]
\indent We shall prove that $e_p(G)<e_p(H(n,F))$, which will contradict the choice of $G$. Recall that $H(n,F)$ is $K_b+E_{n-b}$ with an edge $uv$ added to the empty class, and note that $b\ge\lfloor\frac{5}{2}\rfloor+\lfloor\frac{3}{2}\rfloor-1=2$. Let $d_H(z)$ denote the degree of a vertex $z$ in $H(n,F)$. Clearly, for every $z\in X$ and every vertex $z'$ in the $K_b$, we have $d_G(z)\le n-1=d_H(z')$. Now, let $u_1u_2,\dots,u_{2s-1}u_{2s}$ be all the independent edges in $G[Y]$, for some $s\ge 2$, and let $\Gamma_i$ be the set of vertices in $X$ that are adjacent to at least one of $u_{2i-1}$ and $u_{2i}$, for $i=1,\dots,s$. We may assume that $|\Gamma_1|\ge\cdots\ge|\Gamma_s|\ge 1$. Note that the $\Gamma_i$ are pairwise disjoint subsets of $X$, so that $|\Gamma_2|\le \frac{b}{2}$. Also, since $u_3$ has a neighbour in $X$, we have $|\Gamma_1|\le b-1$. Hence, $d_G(u_j)\le b<d_H(u)=d_H(v)$ for $j=1,2$; $d_G(u_j)\le \frac{b}{2}+1\le b=d_H(z')$ for $j=3,\dots,2s$; and $d_G(z)\le b=d_H(z')$ for $z\in Y\setminus\{u_1,\dots,u_{2s}\}$ and $z'\neq u,v$ in the $E_{n-b}$. The degree sequence of $H(n,F)$ strictly majorises that of $G$, and therefore, we have $e_p(G)<e_p(H(n,F))$.\\[1ex]
\noindent\emph{Case 3.} \emph{$\ell_1\ge 7$, and all vertices of the edges of $G[Y]$ are connected to a single vertex of $X$.}\\[1ex]
\indent Let $y_1\in X$ be this single vertex, and note that $\ell_1\ge 7$ implies that $b\ge 3$. We construct an $F$-free graph $G'$ on $n$ vertices such that $e_{p}(G')>e_{p}(G)$, which contradicts the choice of $G$. Let $y_{2} \in X\setminus\{y_1\}$, and let $Y^*$ denote the set of non-isolated vertices in $G[Y]$. Observe that $|Y^*|\geq 4$ and no vertex of $Y^*$ is adjacent to any vertex of $X\setminus\{y_1\}$. We construct $G'$ as follows: delete the $\frac{|Y^*|}{2}$ independent edges of $G[Y]$, and join $y_{2}$ to each vertex of $Y^*$. 

Similar to Claim \ref{clm3}, we see that $G'$ is an $F$-free graph. Indeed, if $G'$ contains a copy of $F$, then exactly one path, say $P_{\ell_j}$, must use an edge $vy_2$, for some $v\in Y^*$. If $v$ is not an end-vertex of such a $P_{\ell_j}$, then the other neighbour of $v$ in the $P_{\ell_j}$ is $y_1$. Now in $G$, we can find a common neighbour $v'\in Y$ of $y_1$ and $y_2$ which is not used in the copy of $F$. If $v$ is an end-vertex of the $P_{\ell_j}$, then we can take $v'\in Y$ to be any neighbour of $y_2$ not in the copy of $F$. Replacing $v$ with $v'$ on the $P_{\ell_j}$, we obtain a copy of $F$ in $G$, a contradiction.

However, the degree sequence of $G'$ strictly majorises that of $G$, since the degree of $y_2$ has strictly increased, and all other degrees have not changed. Hence $e_p(G')>e_p(G)$, which is the required contradiction.
\end{proof}

By Claim \ref{clm4}, $G$ is a spanning subgraph of $H(n,F)$. Hence $e_{p}(G)<e_{p}(H(n,F))$, which contradicts the choice of $G$. The proof of Theorem \ref{LFthm} is complete.
\end{proof}

\section{Brooms}\label{broomsect}

In this section, we shall consider the function $\ex_p(n,B_{\ell,s})$, where $B_{\ell,s}$ is a broom graph, $p\ge 2$, $\ell\ge 4$, $s\ge 0$, and $n$ is sufficiently large. As we have already seen, Theorems \ref{SWthm1} and \ref{SWthm2} appear to suggest that the determination of the Tur\'an function $\ex(n,B_{\ell,s})$ and the corresponding extremal graphs may be a complicated problem, in the sense that the potential results may be difficult to state. Somewhat surprisingly, we shall see here that the same problem for $\ex_p(n,B_{\ell,s})$, where $p\ge 2$, may possibly be more manageable. Since the case $\ell=4$ is covered in Proposition \ref{CYprp2}, we consider $\ell\ge 5$. Here, we will provide the answers for the cases $\ell=5,6,7$, and present a conjecture for the case of general $\ell$. The case $\ell=5$ turns out to be a rather special case. Although the case $s=0$ is covered by Theorem \ref{CYthm1}, we will include this case here since we will obtain some explicit lower bounds for $n$.

\begin{thm}\label{B5sthm}
Let $p\ge 2$, $s\ge 0$, and $n>(2s+10)^2$. We have
\[
\ex_p(n,B_{5,s})=
\left\{
\begin{array}{ll}
e_p(H(n,5)) & \textup{\emph{if $s=0$,}}\\
e_p(K_1+M_{n-1}) & \textup{\emph{if $s\ge 1$.}}
\end{array}
\right.
\]
Moreover, the unique extremal graph is $H(n,5)$ if $s=0$, and $K_1+M_{n-1}$ if $s\ge 1$.
\end{thm}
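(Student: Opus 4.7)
The case $s=0$ is the $\ell=5$ instance of Theorem~\ref{CYthm1}, so assume $s\ge 1$. First I verify that $K_1+M_{n-1}$ is $B_{5,s}$-free: in any copy of $B_{5,s}$ the penultimate vertex $v_4$ has degree at least $s+2\ge 3$, so $v_4$ must be the apex $c$; but then $v_3\in N(c)\setminus\{c\}$ has only its matching partner $v_2$ as a non-$c$ neighbour, and the only non-$v_3$ neighbour of $v_2$ is $c=v_4$, so no valid $v_1$ exists, a contradiction. A direct computation gives $e_p(K_1+M_{n-1})=(n-1)^p+2^p(n-1)+O(1)$.

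Now suppose $G$ is a $B_{5,s}$-free graph on $n$ vertices with $e_p(G)\ge e_p(K_1+M_{n-1})$ and $G\neq K_1+M_{n-1}$. The plan is to derive a contradiction in three steps.

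Step~1 (finding a near-universal vertex). By Theorem~\ref{SWthm2}, $e(G)\le\ex(n,B_{5,s})\le\tfrac{(s+3)n}{2}+O(1)$. The crucial combinatorial input is that in $G$ any two vertices $x,y$ of degree at least $s+4$ have disjoint neighbourhoods, and hence $d_G(x)+d_G(y)\le n$: for otherwise a common neighbour $u$ yields the $P_5$ $u_1\,x\,u\,y\,u_2$ (with $u_1\in N(x)\setminus\{u,y\}$ and $u_2\in N(y)\setminus\{u,x,u_1\}$), and $s$ further pendants at $y$ drawn from $N(y)\setminus\{u,x,u_1,u_2\}$ complete a forbidden $B_{5,s}$. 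A case analysis on $|M|:=|\{v\in V(G):d_G(v)\ge s+4\}|$, combined with the lower bound $e_p(G)\ge(n-1)^p+2^p(n-1)+O(1)$ and $\sum d_i\le(s+3)n+O(1)$, rules out $|M|\ge 2$; plugging $d_i\le s+3$ for $i\ge 2$ into $e_p(G)\le d_1^p+(s+3)^p\,n+O(1)$ then yields $d_1\ge n-C$ for some constant $C=C(s,p)$.

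Step~2 (structural consequences). Let $v^*$ be the vertex with $d_G(v^*)\ge n-C$, write $N=N_G(v^*)$ and $X=V(G)\setminus(N\cup\{v^*\})$, so $|X|\le C$. The central structural claim is that every $P_3$ in $G-v^*$ has both endpoints in $X$. Indeed, if $u_1wu_2$ is a $P_3$ in $G-v^*$ with $u_1\in N$, then $u_2\,w\,u_1\,v^*\,t$ for any $t\in N\setminus\{u_1,u_2,w\}$, together with $s$ pendants at $v^*$ drawn from $N\setminus\{u_1,u_2,w,t\}$ (of size $\ge n-C-4\gg s$), yields $B_{5,s}$, a contradiction. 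It follows that $G[N]$ is a matching, every $w\in N$ with at least two non-$v^*$ neighbours has all of them in $X$, and every $x\in X$ with a neighbour in $N$ is a pendant; in particular every vertex of $V\setminus\{v^*\}$ has degree bounded by a constant depending only on $C$.

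Step~3 (conclusion). If $|X|=0$ then $v^*$ is universal in $G$ and $G-v^*$ is a (necessarily non-maximum) sub-matching, so $G\subsetneq K_1+M_{n-1}$ and $e_p(G)<e_p(K_1+M_{n-1})$. If $|X|\ge 1$, the deficit at $v^*$,
\[
(n-1)^p-(n-1-|X|)^p\;\ge\;p\,|X|\,(n/2)^{p-1},
\]
exceeds the maximum possible $O(1)$ excess of $\sum_{v\neq v^*}d_G(v)^p$ over the analogous sum in $K_1+M_{n-1}$ (both quantities are $2^p(n-1)+O(1)$, since only $O(1)$ non-$v^*$ vertices deviate from degree $2$, and each by $O(1)$), again giving $e_p(G)<e_p(K_1+M_{n-1})$, the desired contradiction. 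The main obstacle is the case analysis in Step~1 ruling out $|M|\ge 2$: this requires leveraging the disjoint-neighbourhood constraint on high-degree pairs rather than the crude upper bound $\sum d_i\le(s+3)n$ alone.
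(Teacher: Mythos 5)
Your architecture (locate a near\mbox{-}universal vertex $v^*$ by counting, then show every $P_3$ of $G-v^*$ has both ends outside $N(v^*)$) is genuinely different from the paper's, which reduces to connected graphs via the component\mbox{-}merging Lemmas \ref{Blslem1} and \ref{Blslem2} and Proposition \ref{Blsprp}, eliminates pendent edges with Lemma \ref{Blslem3}, and then analyses the distance levels from the maximum\mbox{-}degree vertex. Your Steps 2 and 3 are essentially sound. The genuine gap is Step 1: the claim that $|M|\ge 2$ is ``ruled out'' by combining $e_p(G)\ge (n-1)^p+2^p(n-1)+O(1)$, $\sum_i d_i\le (s+3)n+O(1)$, and the disjoint\mbox{-}neighbourhood constraint does not follow from those estimates. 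Take $p=2$ and $s\ge 1$: the constraints admit $d_1=n-s-4$, $d_2=s+4$, and the remaining degree sum $(s+2)n+O(1)$ packed into vertices of degree $s+3$, so they only give
\[
e_2(G)\;\le\;(n-s-4)^2+(s+4)^2+(s+2)(s+3)n+O(1),
\]
and since $(n-1)^2-(n-s-4)^2=2(s+3)n-O(1)$ while $(s+2)(s+3)n-4n=(s^2+5s+2)n$ with $s^2+5s+2\ge 2(s+3)$ for all $s\ge 1$, this upper bound is \emph{not} below $(n-1)^2+4(n-1)$; no contradiction is obtained. This is precisely where the hard work lives: one must exclude near\mbox{-}extremal configurations assembled from several dense $B_{5,s}$-free pieces (e.g.\ a large $K_1+M_{m}$ together with copies of $K_{s+4}$, or two hubs), which the paper handles by directly comparing $e_p$ of a disjoint union against $e_p$ of one large extremal graph (Lemmas \ref{Blslem1}, \ref{Blslem2}) rather than through degree\mbox{-}sum inequalities. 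You flag this step yourself as the main obstacle, and as written it fails.

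The step is repairable, but not in the form you state. You do not need $|M|\le 1$; you only need $d_1\ge n-C(s)$, and that does follow: pairwise disjointness of neighbourhoods gives $\sum_{v\in M}d_G(v)\le n$, hence $\sum_{v\in M}d_G(v)^p\le d_1^{p-1}n$, while $\sum_{v\notin M}d_G(v)^p\le (s+3)^{p-1}\cdot 2e(G)\le (s+3)^p n+O(1)$; together with $e_p(G)\ge (n-1)^p$ this forces $d_1^{p-1}\ge (n-1)^{p-1}-O_s(1)\cdot (n-1)^{p-2}$ and so $d_1\ge n-C$ with $C=C(s)$. A second vertex of degree in $[s+4,C]$ may then survive into Step 2, but your structural claim already confines all its non-$v^*$ neighbours to $X$ as private pendants, so Step 3 still closes. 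Two smaller points: your thresholds on $n$ depend on $p$ through terms like $(C+1)^p$ in Step 3, whereas the theorem's bound $n>(2s+10)^2$ does not; and in the case $|X|=0$ you should invoke strict monotonicity of $e_p$ under edge addition to get the strict inequality when $G\subsetneq K_1+M_{n-1}$.
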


\begin{thm}\label{B6sthm}
Let $p\ge 2$, $s\ge 0$, and $n>(2s+12)^2$. We have
\[
\ex_p(n,B_{6,s})=e_p(H(n,6)).
\]
Moreover, $H(n,6)$ is the unique extremal graph.
\end{thm}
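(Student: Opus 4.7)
The plan is to adapt the proof strategy of Theorem \ref{LFthm} to the broom $B_{6,s}$, using Theorem \ref{CYthm1} when $G$ happens to be $P_6$-free. Since $H(n,6) = K_2 + E_{n-2}$ has diameter two, it is $P_6$-free and therefore $B_{6,s}$-free, and a direct computation gives
\[
e_p(H(n,6)) = 2(n-1)^p + (n-2)\cdot 2^p = 2n^p + O(n^{p-1}).
\]
Suppose for contradiction that $G$ is a $B_{6,s}$-free graph on $n$ vertices with $G \ne H(n,6)$ and $e_p(G) = \ex_p(n, B_{6,s}) \ge e_p(H(n,6))$. A key preliminary observation is that, since $G$ is $B_{6,s}$-free, every vertex $v$ that is an endpoint of some $P_5$ in $G$ satisfies $d_G(v) \le s+4$: otherwise, if $P_5 = u_1 u_2 u_3 u_4 v$ with $d_G(v) \ge s+5$, then $v$ has at least $s+1$ neighbors outside $\{u_1, u_2, u_3, u_4\}$, extending to a $P_6$ with $v$ as its fifth vertex and giving $s$ pendent edges for a copy of $B_{6,s}$.

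If $G$ is $P_6$-free, then Theorem \ref{CYthm1} (with $\ell=6$) immediately yields $e_p(G) \le e_p(H(n,6))$, with equality only when $G = H(n,6)$, a contradiction. So assume $G$ contains a copy of $P_6$. I would then show $G$ has at most two vertices of degree exceeding $0.65n$: three such vertices $v_1, v_2, v_3$ would share pairwise common neighborhoods of size at least $0.3n$, enabling a copy of $B_{6,s}$ as the path $v_1$--$x$--$v_2$--$y$--$v_3$--$z_0$ with pendent edges at $v_3$, where $x \in N(v_1)\cap N(v_2)\setminus\{v_3\}$, $y \in N(v_2)\cap N(v_3)\setminus\{v_1, x\}$, and $z_0, z_1, \ldots, z_s$ are distinct neighbors of $v_3$ outside $\{v_1, v_2, x, y\}$. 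The next step is to apply Lemma \ref{CYlem2} with $b=2$ to conclude $d_2 > 0.65n$, which requires an edge bound $e(G) \le 2.5n$. The main obstacle is this bound: unlike in the linear-forest case, where Theorem \ref{LLPthm1} gives $e(G) \le 2n$, a $B_{6,s}$-free graph may have substantially more edges (e.g., disjoint copies of $K_6$ for $s\ge 1$ give $2.5n$ edges). I expect this to follow by combining the $P_5$-endpoint observation with the high-degree bound above, which together force many low-degree vertices in $G$.

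Once $d_2 > 0.65n$, set $X = \{v_1, v_2\}$, $Y = V(G)\setminus X$, $A = \{v \in Y : v \text{ has a neighbor in } X\}$, and $B = Y\setminus A$; the common neighborhood $N(v_1)\cap N(v_2)\cap Y$ has at least $0.3n$ vertices. Mirroring Claims 1--4 of the proof of Theorem \ref{LFthm}, I would prove in turn: (i) $G[Y]$ has no edge with an end-vertex in $A$ (such an edge, extended using the common neighborhood of $X$, yields a $P_6$ whose fifth vertex is in $X$, producing $B_{6,s}$ thanks to its high degree); (ii) $G[B]$ is edgeless (by a similar construction); (iii) $B = \emptyset$, via a degree-transfer modification where the edges at some $v \in B$ are deleted and $v$ is reconnected to both vertices of $X$, producing a $B_{6,s}$-free graph with strictly larger $e_p$, contradicting the maximality of $G$; and (iv) $G[Y]$ itself is edgeless. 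These force $G \subseteq H(n,6)$, and together with $e_p(G) \ge e_p(H(n,6))$ force $G = H(n,6)$, contradicting $G \ne H(n,6)$. The explicit hypothesis $n > (2s+12)^2$ ensures that all the common-neighbor and pendent-edge selections above fit inside $G$.
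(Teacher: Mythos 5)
There is a genuine gap at the pivotal step of your argument: the edge bound $e(G)\le 2.5n$ that you need in order to invoke Lemma \ref{CYlem2} with $b=2$ is simply false for $B_{6,s}$-free graphs once $s\ge 2$. A disjoint union of copies of $K_{s+5}$ is $B_{6,s}$-free (the broom has $s+6$ vertices) and has $\frac{(s+4)n}{2}$ edges, which exceeds $2.5n$ for every $s\ge 2$; your $P_5$-endpoint observation is consistent with this example (every vertex has degree exactly $s+4$), so it cannot rescue the bound. You flag this as ``the main obstacle'' and only express hope that it follows, but no argument is given, and none of the ingredients you list (the degree cap $s+4$ on $P_5$-endpoints, the bound of two vertices of degree $>0.65n$) yields $e(G)\le 2.5n$ or, more to the point, yields $d_2>0.65n$. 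A second, related failure is your claim (ii) that $G[B]$ is edgeless ``by a similar construction'': the construction from Claim 2 of Theorem \ref{LFthm} uses the fact that a linear forest is disconnected, so a path found inside $B$ can serve as one component while the remaining components are routed through $X$. A broom is connected, so a clique $K_{s+5}$ sitting as an entire component of $G$ inside $B$ creates no copy of $B_{6,s}$ whatsoever; $G[B]$ can contain $\Theta(n)$ edges. This is not a side issue — it is exactly the configuration that breaks the edge bound above.

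The paper's proof is organised precisely to avoid these problems. Proposition \ref{Blsprp} first reduces to connected $B_{6,s}$-free graphs $C$ with $\Delta(C)\ge s+5$: components of small maximum degree are killed by the trivial bound $e_p\le n(s+4)^p<(n-1)^p$ (valid because $n>(2s+12)^2$), and the gluing Lemmas \ref{Blslem1} and \ref{Blslem2} show that splitting into several such pieces is always suboptimal for $e_p$. For a connected piece with a vertex $v$ of maximum degree, the paper then runs a BFS-layer analysis from $v$ (showing $V_i=\emptyset$ for $i\ge 4$, then eliminating $V_3$ and constraining $C[V_1\cup V_2]$ to stars and triangles via the pendent-subgraph operations of Lemma \ref{Blslem3}), and finishes with explicit local $e_p$-increasing operations that transform $C$ into $H(c,6)$. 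If you want to salvage your outline, you would need to replace the Lemma \ref{CYlem2} step with some such reduction to the connected, large-maximum-degree case before any global degree-sequence argument can get off the ground.
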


\begin{thm}\label{B7sthm}
Let $p\ge 2$, $s\ge 0$, and $n>(3s+31)^2$. We have
\[
\ex_p(n,B_{7,s})=e_p(H(n,7)).
\]
Moreover, $H(n,7)$ is the unique extremal graph.
\end{thm}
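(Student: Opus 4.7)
The plan is to follow the template of Theorems~\ref{B5sthm} and~\ref{B6sthm}, combined with the structural analysis developed in the proof of Theorem~\ref{LFthm}. I would first verify that $H(n,7) = K_2 + E_{n-2}$ with an additional edge inside $E_{n-2}$ is $B_{7,s}$-free by checking that it is $P_7$-free: any path in $H(n,7)$ alternates between the two $K_2$-vertices and vertices of $E_{n-2}$ (with at most one use of the extra edge), so the longest path has only six vertices. A direct computation gives
\[
e_p(H(n,7)) = 2(n-1)^p + 2\cdot 3^p + (n-4)\cdot 2^p = 2n^p + o(n^p),
\]
establishing the lower bound $\ex_p(n,B_{7,s}) \geq e_p(H(n,7))$.

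For the upper bound, suppose $G$ is a $B_{7,s}$-free graph on $n$ vertices with $e_p(G)\geq e_p(H(n,7))$ and $G\neq H(n,7)$; I aim for a contradiction. Since $B_{7,s}$ is a tree on $s+7$ vertices, any graph of minimum degree at least $s+6$ contains $B_{7,s}$ by a greedy embedding argument, so iteratively removing a minimum-degree vertex from $G$ yields $e(G)\leq (s+5)n$. Next I would show that at most two vertices of $G$ have degree at least $0.65n$: if three such vertices $y_1,y_2,y_3$ existed, their pairwise common neighborhoods would each have size at least $0.3n$, so one can select distinct $a_1\in N(y_1)\cap N(y_2)$, $a_2\in N(y_2)\cap N(y_3)$, $v_1\in N(y_1)$, $v_2\in N(y_3)$, yielding the $P_7$ given by $v_1 y_1 a_1 y_2 a_2 y_3 v_2$; its penultimate vertex $y_1$ has at least $0.65n-6\gg s$ neighbors outside the path, producing a copy of $B_{7,s}$, a contradiction. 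Combining this with $e_p(G)\geq 2(n-1)^p$, convexity of $x\mapsto x^p$, and a bootstrapping argument then forces exactly two vertices $y_1,y_2$ to have degree $n-o(n)$, with every other vertex having degree bounded by a constant depending only on $s$.

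Setting $X=\{y_1,y_2\}$ and $Y=V(G)\setminus X$, partition $Y = A \dcup B$ where $A$ consists of vertices having a neighbor in $X$. Following the proof of Theorem~\ref{LFthm}, I would first show that $B=\emptyset$ by an exchange argument: deleting the incident edges of a low-degree vertex $v\in B$ and joining $v$ to both $y_1$ and $y_2$ preserves $B_{7,s}$-freeness (any introduced copy can be rerouted through a common neighbor of $y_1,y_2$ in $A$) while strictly increasing $e_p$. I would then show that $G[Y]$ contains at most one edge: two edges in $G[Y]$, combined with the high degrees of $y_1,y_2$, allow the assembly of a $P_7$ whose penultimate vertex lies in $X$ and has abundant spare neighbors in $A$ to serve as the $s$ pendants of $B_{7,s}$. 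Hence $G$ must be a spanning subgraph of $H(n,7)$, and a final majorization argument gives $G=H(n,7)$, contradicting the assumption.

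The hardest step will be the bootstrapping. The bound $e(G)\leq (s+5)n$ is substantially weaker than the bound $e(G)\leq 2n$ used in the linear forest case of Theorem~\ref{LFthm}, so Lemma~\ref{CYlem2} with $b=2$ cannot be applied directly. An initial estimate of the form $d_1,d_2=\Omega(n/s)$ is straightforward from convexity of $x\mapsto x^p$, but pushing these two degrees up to $n-o(n)$ requires careful local analysis at each candidate high-degree vertex, exploiting the $P_7$ substructure of $B_{7,s}$ to rule out configurations of intermediate-degree vertices that would otherwise contribute too much to $e_p(G)$.
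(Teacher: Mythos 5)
Your proposal sets up the lower bound correctly and makes some true observations (the bound $e(G)\le(s+5)n$ from the minimum-degree greedy embedding, and the fact that three vertices of degree at least $0.65n$ force a copy of $B_{7,s}$), but it has a genuine gap at the step you yourself flag as the hardest one: passing from ``at most two vertices of degree $\ge 0.65n$'' to ``exactly two vertices of degree $n-o(n)$ and all other degrees bounded by a constant.'' This is not a technicality that convexity plus bootstrapping will deliver. With only $e(G)\le(s+5)n$ and $d_3<0.65n$ in hand, the best convexity gives is $\sum_{i\ge 3}d_i^p\le(0.65n)^{p-1}\cdot 2(s+5)n=2(s+5)(0.65)^{p-1}n^p$, and for $p=2$ this error term is about $1.3(s+5)n^2$, which dwarfs the gap $2(n-1)^2-d_1^2-d_2^2$ you would need it to be smaller than; indeed it even fails to show that a \emph{second} vertex of degree $\Omega(n)$ exists. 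Since $\ex(n,B_{7,s})$ grows like $\frac{s+5}{2}n$ (disjoint copies of $K_{s+6}$ are $B_{7,s}$-free), no edge bound of the form $(2+\tfrac12)n$ is available, so Lemma~\ref{CYlem2} with $b=2$ is genuinely unusable and nothing in your sketch replaces it. Everything after this point (the claims $B=\emptyset$ and $e(G[Y])\le 1$, which are fine once two vertices of degree $n-o(n)$ are secured) therefore rests on an unproved foundation, and filling it in would require essentially all of the structural work that your outline omits.

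For comparison, the paper never proves a global degree-sequence statement of this kind. It reduces via Proposition~\ref{Blsprp} (together with the component-combining Lemmas~\ref{Blslem1} and~\ref{Blslem2}) to a single connected $B_{7,s}$-free component $C$ containing a vertex $v$ of degree at least the \emph{constant} $2s+23$, then uses Lemma~\ref{Blslem3} to assume $C$ has no pendent edges, triangles, diamonds or spindles, analyses the BFS layers $V_1,\dots,V_4$ from $v$ to show $V_3=V_4=\emptyset$ and to classify $C[V_1\cup V_2]$ into a handful of component types, and finally applies a sequence of explicit $e_p$-increasing local surgeries that terminate at $H(c,7)$. The constant degree threshold is what makes the argument close: one never needs to know a priori that two vertices have degree $n-o(n)$, only that the local structure around one moderately high-degree vertex is rigid. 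If you want to salvage your global approach, you would need to prove an analogue of Lemma~\ref{CYlem2} valid under the much weaker hypothesis $e(G)\le(s+5)n$, exploiting $B_{7,s}$-freeness structurally (not just numerically) to control the vertices of intermediate degree; as written, that key lemma is missing.
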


In view of Theorems \ref{B6sthm} and \ref{B7sthm}, we believe that the following assertion may be true.

\begin{conj}\label{Blsconj}
Let $p\ge 2$, $\ell\ge 6$, $s\ge 0$, and $n\ge n_0(\ell,s)$ be sufficiently large. We have
\[
\ex_p(n,B_{\ell,s})=e_p(H(n,\ell)).
\]
Moreover, $H(n,\ell)$ is the unique extremal graph.
\end{conj}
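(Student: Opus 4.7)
The plan is to generalise the proofs of Theorems \ref{B6sthm} and \ref{B7sthm}, following the blueprint established in the proof of Theorem \ref{LFthm}. Let $b=\lfloor\ell/2\rfloor-1\ge 2$ (well-defined since $\ell\ge 6$). The lower bound $\ex_p(n,B_{\ell,s})\ge e_p(H(n,\ell))$ is immediate: $H(n,\ell)$ is $P_\ell$-free by Theorem \ref{CYthm1}, hence $B_{\ell,s}$-free since $P_\ell\subseteq B_{\ell,s}$. For the upper bound and uniqueness, I would take a $B_{\ell,s}$-free graph $G$ on $n$ vertices, maximal with $e_p(G)\ge e_p(H(n,\ell))=bn^p+o(n^p)$, and aim to show $G=H(n,\ell)$.

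The first step is to pin down the set of high-degree vertices $X=\{v\in V(G):d_G(v)>0.65n\}$ and show $|X|=b$. For the upper bound $|X|\le b$: suppose for contradiction that $v_1,\dots,v_{b+1}\in X$. Any two of them have common neighbourhood of size at least $2(0.65n)-n=0.3n$, so one can greedily interleave fresh common neighbours to obtain a path $v_1u_1v_2u_2\cdots v_{b+1}u_{b+1}$ (for $\ell=2b+2$ even) or $u_0v_1u_1\cdots v_{b+1}u_{b+1}$ (for $\ell=2b+3$ odd), where the $u_i$ are distinct vertices of $V(G)\setminus\{v_1,\dots,v_{b+1}\}$. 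In both cases the path is a $P_\ell$ whose penultimate vertex is $v_{b+1}\in X$, so $v_{b+1}$ has at least $0.65n-\ell\ge s$ neighbours outside the $P_\ell$ for $n$ large, producing $B_{\ell,s}\subseteq G$, a contradiction. For the matching lower bound $|X|\ge b$, the natural route is Lemma \ref{CYlem2}; this requires an edge bound of the form $e(G)\le(b+\tfrac{1}{2})n$, which is the main technical obstacle discussed in the last paragraph.

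Once $|X|=b$ is in hand, write $Y=V(G)\setminus X$, $A=\{v\in Y:v\text{ has a neighbour in }X\}$, and $B=Y\setminus A$. I would then prove a sequence of structural claims paralleling Claims~\ref{clm1}--\ref{clm4} in the proof of Theorem \ref{LFthm}: (i) $G[Y]$ contains no $P_3$ with an end-vertex in $A$ when $\ell$ is odd, and no edge with an end-vertex in $A$ when $\ell$ is even; (ii) $G[B]$ is $P_\ell$-free (in fact a shorter forbidden path often suffices); (iii) $B=\emptyset$; and (iv) $G[Y]$ has at most one edge if $\ell$ is odd and no edges if $\ell$ is even. Each claim is proved by splicing the hypothetical forbidden configuration in $G[Y]$ together with an alternating path through $X$ to build a $P_\ell$ whose penultimate vertex lies in $X$; such a vertex then admits $s$ free neighbours outside the $P_\ell$, yielding $B_{\ell,s}$. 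The hypothesis $\ell\ge 6$ (so $b\ge 2$) is essential: it provides enough room inside $X$ to route a $P_\ell$ with the penultimate slot occupied by a high-degree vertex, and its failure at $\ell=5$ is exactly the reason Theorem \ref{B5sthm} has a different extremal graph. The four claims together force $G$ to be a spanning subgraph of $H(n,\ell)$, and maximality of $e_p(G)$ then yields $G=H(n,\ell)$.

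The main obstacle is the edge bound needed for the lower bound on $|X|$. The standard tree-Tur\'an bound gives only $e(G)\le(\ell+s-2)n$, which is much weaker than $(b+\tfrac{1}{2})n$ and is insufficient for small $p$ (e.g.\ $p=2$) to force $e_p(G)<e_p(H(n,\ell))$ via Lemma \ref{CYlem2}. A path-theoretic stability argument seems necessary, built on the following observation: every vertex $v$ of $G$ with $d_G(v)\ge s+\ell$ cannot be an end-vertex of any $P_{\ell-1}$ in $G$, for otherwise one extends to $P_\ell$ and attaches $s$ pendants at $v$ to obtain $B_{\ell,s}$. This local restriction severely constrains the global structure of $G$ around high-degree vertices, and translating it into a sharp edge bound of the form $e(G)\le(b+\tfrac{1}{2})n+O(1)$—possibly by applying a refinement of Erd\H{o}s--Gallai to the subgraph obtained after removing vertices of degree at least $s+\ell$, or by a supersaturation argument counting $P_\ell$'s through a common penultimate vertex—is the decisive technical step on which a complete proof of the conjecture hinges.
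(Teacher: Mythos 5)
The statement you are proving is stated in the paper as a \emph{conjecture}: the authors prove it only for $\ell=6$ and $\ell=7$ (Theorems \ref{B6sthm} and \ref{B7sthm}), via a quite different route, namely Proposition \ref{Blsprp} (reduction to connected $B_{\ell,s}$-free graphs containing a vertex of large degree) together with Lemma \ref{Blslem3} (local $e_p$-increasing transformations that absorb pendent edges, triangles, diamonds and spindles into the maximum-degree vertex), followed by a case analysis of the distance levels $V_1,V_2,\dots$ from that vertex. Your plan instead transplants the proof of Theorem \ref{LFthm}: identify the $b$ high-degree vertices via Lemma \ref{CYlem2} and then run structural claims on $A$ and $B$. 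This is a legitimate alternative strategy, and your argument for $|X|\le b$ (splicing a $P_\ell$ through $b+1$ high-degree vertices so that the penultimate vertex has $s$ spare neighbours) is sound.

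However, the proposal is not a proof: the step you yourself flag as the ``decisive technical step'' is genuinely missing, and it is not a routine gap. Lemma \ref{CYlem2} requires the hypothesis $e(G)\le (b+\tfrac12)n$, and this bound is simply \emph{false} for $B_{\ell,s}$-free graphs in general: the disjoint union of copies of $K_{\ell+s-1}$ is $B_{\ell,s}$-free and has about $\tfrac{\ell+s-2}{2}\,n$ edges, which exceeds $(b+\tfrac12)n=(\lfloor\ell/2\rfloor-\tfrac12)n$ as soon as $s$ is moderately large. (In the linear forest setting the analogous bound (\ref{kPleq}) comes for free from Theorem \ref{LLPthm1}; no such Tur\'an result is available for brooms, and Theorems \ref{SWthm1} and \ref{SWthm2} show the broom Tur\'an numbers really do grow linearly in $s$.) So you cannot invoke Lemma \ref{CYlem2} as stated; you would need to prove a conditional edge bound valid under the additional assumption $e_p(G)\ge bn^p+o(n^p)$, or first strip away the low-$e_p$ dense pieces, and neither is supplied. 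Until the lower bound $|X|\ge b$ is established, Claims (i)--(iv) have nothing to act on. The remainder of your outline (the broom analogues of Claims \ref{clm1}--\ref{clm4}) is plausible but also only sketched; in particular the analogue of Claim \ref{clm3} in the paper's proof of Theorem \ref{LFthm} required a delicate $e_p$-comparison after rewiring a vertex of $B$, and the corresponding computation for brooms (where the rewired vertex may have up to $\ell+s-2$ neighbours in $B$) is not checked. As it stands, the proposal is a reasonable research programme, not a proof of the conjecture.
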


That is, Conjecture \ref{Blsconj} claims that if $n$ is sufficiently large, then $\ex_p(n,B_{\ell,s})$ is exactly the same as $\ex_p(n,P_{\ell})$, with the same unique extremal graph $H(n,\ell)$. If Conjecture \ref{Blsconj} is true, then it can be considered as an extension to Theorem \ref{CYthm1}.

Before we prove Theorems \ref{B5sthm} to \ref{B7sthm}, we first prove some auxiliary lemmas. We also prove a proposition which simplifies a possible proof of Conjecture \ref{Blsconj}.

\begin{lemma}\label{Blslem1}
Let $p\ge 2$, $n_1,n_2\ge\ell$, and $n=n_1+n_2$.
\begin{enumerate}
\item[(a)] If $\ell=5$, then $e_p(K_1+M_{n_1-1})+e_p(K_1+M_{n_2-1})<e_p(K_1+M_{n-1})$.
\item[(b)] If $\ell\ge 5$, then $e_p(H(n_1,\ell))+e_p(H(n_2,\ell))<e_p(H(n,\ell))$.
\end{enumerate}
\end{lemma}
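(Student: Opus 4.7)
My approach is to expand $e_p$ directly from the degree sequence of each graph and decompose it into a dominant ``hub'' contribution from vertices of large degree $n-1$ together with a much smaller ``leaf'' remainder from powers of bounded constants. I would then show that in the difference $e_p(\text{large}) - e_p(\text{small}_1) - e_p(\text{small}_2)$ the hub contribution strictly dominates. The key analytic tool throughout is the strict super-additivity of $x \mapsto x^p$ for $p \ge 2$: since $n-1 > (n_1-1) + (n_2-1)$ and $(a+b)^p > a^p + b^p$ for $a,b>0$, one has $(n-1)^p > (n_1-1)^p + (n_2-1)^p$.

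For part~(a), I would first write $e_p(K_1 + M_{m-1}) = (m-1)^p + A(m)$, where $A(m) = (m-1)2^p$ if $m$ is odd and $A(m) = (m-2)2^p + 1$ if $m$ is even; the formula simply records the contribution of matched vertices (degree $2$ in the join) and, if present, one isolated vertex of $M_{m-1}$ (degree $1$). The difference splits as the hub part $(n-1)^p - (n_1-1)^p - (n_2-1)^p$ plus the residue $A(n) - A(n_1) - A(n_2)$. The hub part is strictly positive by the inequality above, and a quick case analysis over the four parities of $(n_1, n_2)$ shows that the residue equals $1$ when both $n_1, n_2$ are odd and $2^{p+1} - 1$ otherwise, hence is always at least $1$; adding the two yields strict positivity.

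For part~(b), I would split on the parity of $\ell$. When $\ell$ is even, $H(n,\ell) = K_b + E_{n-b}$ and $e_p(H(n,\ell)) = b(n-1)^p + (n-b)b^p$, so the difference reduces to $b[(n-1)^p - (n_1-1)^p - (n_2-1)^p] + b^{p+1}$, both terms positive. When $\ell$ is odd, the single extra edge in $E_{n-b}$ gives $e_p(H(n,\ell)) = b(n-1)^p + (n-b-2)b^p + 2(b+1)^p$, and the difference becomes
\[
b\bigl[(n-1)^p - (n_1-1)^p - (n_2-1)^p\bigr] + (b+2)b^p - 2(b+1)^p,
\]
where the constant leaf remainder $(b+2)b^p - 2(b+1)^p$ can be strictly negative for small $b$ and moderately large $p$ (e.g., $-5$ when $b=1,\,p=2$); this is the main obstacle.

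To overcome it, I would strengthen the lower bound on the hub difference via super-additivity $(n_1-1)^p + (n_2-1)^p \le (n-2)^p$ combined with the mean-value inequality $(n-1)^p - (n-2)^p \ge p(n-2)^{p-1}$, which together give
\[
(n-1)^p - (n_1-1)^p - (n_2-1)^p \;\ge\; p(n-2)^{p-1} \;\ge\; p(2\ell-2)^{p-1},
\]
using $n \ge 2\ell$. Since $b+1 = (\ell-1)/2$, the leaf remainder is at worst $-2(b+1)^p = -(\ell-1)^p/2^{p-1}$, and the desired positivity reduces to $(\ell-3)\,p \cdot 4^{p-1} \ge 2(\ell-1)$. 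Writing this as $p \cdot 4^{p-1} \ge 2(\ell-1)/(\ell-3)$ and noting $(\ell-1)/(\ell-3) \le 2$ for $\ell \ge 5$, the right side never exceeds $4$, while the left side equals $8$ at $p=2$ and grows exponentially; hence the hub term alone dominates the leaf remainder, and adding the remaining positive $(b+2)b^p$ secures strict positivity.
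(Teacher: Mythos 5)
Your proof is correct and follows essentially the same route as the paper: both split off the dominant $(n-1)^p$ terms via strict superadditivity of $x\mapsto x^p$, use the extra gain $(n-1)^p-(n-2)^p\ge p(n-2)^{p-1}$ together with $n\ge 2\ell$ to absorb the possibly negative constant term $(b+2)b^p-2(b+1)^p$ in the odd-$\ell$ case, and treat the remaining bounded-degree contributions as a small residue. The only differences are cosmetic (exact parity bookkeeping in part (a) versus the paper's uniform upper bounds, and an explicit even/odd split in part (b)).
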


\begin{proof}
(a) Let $\ell=5$. Then
\begin{align*}
e_p(K_1+M_{n_1-1})+e_p(K_1+M_{n_2-1}) &\le (n_1-1)^p+(n_1-1)2^p\\
&\quad\quad\quad +(n_2-1)^p+(n_2-1)2^p\\
&< (n-1)^p+(n-2)2^p< e_p(K_1+M_{n-1}).
\end{align*}
\indent (b) Let $\ell\ge 5$, and $b=\lfloor\frac{\ell}{2}\rfloor-1\ge 1$. Since
\begin{align*}
e_p(H(n_1,\ell))+e_p(H(n_2,\ell)) &\le b(n_1-1)^p+(n_1-b-2)b^p+2(b+1)^p\\
&\quad\quad\quad +b(n_2-1)^p+(n_2-b-2)b^p+2(b+1)^p,\\
e_p(H(n,\ell)) &\ge b(n-1)^p+(n-b)b^p,
\end{align*}
it suffices to prove that
\begin{equation}
b(n-1)^p>b[(n_1-1)^p+(n_2-1)^p]+4(b+1)^p-(b+4)b^p.\label{Blslemeq1}
\end{equation}
Clearly, $n\ge 2\ell\ge 4b+4$. We have
\begin{align*}
b(n-1)^p &>b(n-2)^p+bp(n-2)^{p-1}\\
&>b[(n_1-1)^p+(n_2-1)^p]+2b(4b+2)^{p-1},
\end{align*}
which implies (\ref{Blslemeq1}), since it is easy to verify that $2b(4b+2)^{p-1}> 4(b+1)^p-(b+4)b^p$.
\end{proof}

\begin{lemma}\label{Blslem2}
%
Let $p\ge 2$, $s\ge 0$, $\ell\ge 5$. Let $G^\ast$ be a graph on $h^\ast>0$ vertices with $\Delta(G^\ast)\le d=d(\ell,s)$. Let $n=h+h^\ast>(\ell+s+d)^2$ for some $h\ge \ell$.
\begin{enumerate}
\item[(a)] If $\ell=5$, then $e_p(K_1+M_{h-1})+e_p(G^\ast)<e_p(K_1+M_{n-1})$.
\item[(b)] If $\ell\ge 5$, then $e_p(H(h,\ell))+e_p(G^\ast)<e_p(H(n,\ell))$.
\end{enumerate}
\end{lemma}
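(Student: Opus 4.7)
The plan is to bound the three quantities involved via the explicit degree sequences of $K_1 + M_{m-1}$ and $H(m,\ell)$, and then observe that the gain in $e_p$ coming from the central vertex (or from the $b$ central vertices of the clique part) as we pass from $m = h$ to $m = n = h + h^\ast$ already outstrips $e_p(G^\ast)$.

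From the definitions, $e_p(K_1 + M_{m-1})$ equals $(m-1)^p + (m-1)2^p$ or $(m-1)^p + (m-2)2^p + 1$ according to the parity of $m - 1$, and $e_p(H(m,\ell))$ has the two cases recorded in Theorem~\ref{CYthm1}. Since $\Delta(G^\ast) \le d$, we have $e_p(G^\ast) \le h^\ast d^p$. A direct computation, in which the $O(1)$ contribution $2(b+1)^p$ arising in the $\ell$ odd case cancels between $H(n,\ell)$ and $H(h,\ell)$, gives
\begin{align*}
e_p(K_1 + M_{n-1}) - e_p(K_1 + M_{h-1}) &\ge (n-1)^p - (h-1)^p + (h^\ast - 1)2^p,\\
e_p(H(n,\ell)) - e_p(H(h,\ell)) &= b\bigl[(n-1)^p - (h-1)^p\bigr] + h^\ast b^p,
\end{align*}
where $b = \lfloor \ell/2 \rfloor - 1 \ge 1$. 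The key algebraic estimate is the factorisation
\[
(n-1)^p - (h-1)^p \;=\; h^\ast \sum_{i=0}^{p-1}(n-1)^{p-1-i}(h-1)^i \;>\; h^\ast(n-1)^{p-1},
\]
which is strict for $p \ge 2$ because the sum contains additional positive summands (e.g.~$(h-1)^{p-1} > 0$, since $h \ge \ell \ge 5$). The hypothesis $n > (\ell + s + d)^2$ gives $n - 1 > d^2$, whence $(n-1)^{p-1} > d^{2(p-1)} \ge d^p$ for every $p \ge 2$, so $(n-1)^p - (h-1)^p > h^\ast d^p \ge e_p(G^\ast)$. This immediately yields (a) (the residual term $(h^\ast - 1)2^p$ is nonnegative) and (b) (the factor $b \ge 1$ in front of the key inequality together with the strictly positive term $h^\ast b^p$ preserve the strict margin).

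The one point requiring some care is that $d^{2(p-1)} \ge d^p$ becomes an equality at $p = 2$, so for the borderline case the strict margin must come from the non-leading terms in the factorisation of $(n-1)^p - (h-1)^p$; concretely, for $p = 2$ we have $(n-1)^2 - (h-1)^2 = h^\ast(n + h - 2) > h^\ast(n-1) > h^\ast d^2$, the last step using $n - 1 > d^2$. The remaining bookkeeping over the parities of $\ell$ and of $m - 1$ does not involve $p$ at any stage, so the threshold for $n$ depends only on $\ell$, $s$, $d$, in line with the paper's convention that the lower bound on $n$ be independent of $p$.
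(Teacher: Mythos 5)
Your proof is correct and follows essentially the same route as the paper's: both bound $e_p(G^\ast)$ by $h^\ast d^p$ and control $(n-1)^p-(h-1)^p$ from below by $h^\ast(n-1)^{p-1}>h^\ast d^{2p-2}\ge h^\ast d^p$ using $n>(\ell+s+d)^2$. Your only (harmless) deviation is in part (b), where you note that the $2(b+1)^p$ terms cancel exactly between $H(n,\ell)$ and $H(h,\ell)$, which spares you the paper's extra step of absorbing that term via $(\ell+s+d)^{2p-2}\ge(2b+2+d)^p>(2b+2)^p+d^p$.
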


\begin{proof}
%
%
%
%
(a) Let $\ell=5$. We have
\begin{align*}
e_p(K_1+M_{h-1})+e_p(G^\ast) &\le (h-1)^p+(h-1)2^p+h^\ast d^p,\\
e_p(K_1+M_{n-1}) &> (n-1)^p+(n-2)2^p.
\end{align*}

Since $(n-2)2^p\ge (h-1)2^p$, and
\[
(n-1)^p-(h-1)^p>(n-h)(n-1)^{p-1}>h^\ast d^{2p-2}\ge h^\ast d^p,
\]
it follows that $e_p(K_1+M_{h-1})+e_p(G^\ast)<e_p(K_1+M_{n-1})$.\\[1ex]
\indent (b) Let $\ell\ge 5$, and $b=\lfloor\frac{\ell}{2}\rfloor-1\ge 1$. Since
\begin{align*}
e_p(H(h,\ell))+e_p(G^\ast) &\le b(h-1)^p+(h-b-2)b^p+2(b+1)^p+h^\ast d^p,\\
e_p(H(n,\ell)) &\ge b(n-1)^p+(n-b)b^p,
\end{align*}
and $(n-b)b^p>(h-b-2)b^p$, it suffices to prove that
\[
b[(n-1)^p-(h-1)^p]>2(b+1)^p+h^\ast d^p.
\]
Clearly $\ell+s\ge\ell\ge 2b+2$. We have
\begin{align*}
b[(n-1)^p-(h-1)^p] &>(n-h)(n-1)^{p-1}\ge h^\ast(\ell+s+d)^{2p-2}\\
&\ge h^\ast(2b+2+d)^p> h^\ast(2b+2)^p+h^\ast d^p\\
&>2(b+1)^p+h^\ast d^p,
\end{align*}
as required.
\end{proof}

Before we prove the next lemma, we make some definitions. Let $C$ be a connected graph, and $v,x\in V(C)$.
\begin{itemize}
\item For $y\in V(C-\{v,x\})$, the edge $e=xy\in E(C)$ is an \emph{$x$-pendent edge} if $x$ is the only neighbour of $y$ in $C$.
\item Let $y,y'\in V(C-\{v,x\})$ where $xy,xy',yy'\in E(C)$, and $y,y'$ do not have any other neighbours in $C$. The subgraph $T=C[\{x,y,y'\}]$ is an \emph{$x$-pendent triangle}.
\item Let $z,y,y'\in V(C-\{v,x\})$ where $xy,xy',zy,zy',yy'\in E(C)$, and $z,y,y'$ do not have any other neighbours in $C$. The subgraph $D=C[\{x,z,y,y'\}]$ is an \emph{$x$-pendent diamond}.
\item For some $t\ge 2$, let $z,y_1,\dots,y_t\in V(C-\{v,x\})$ where $xy_k,zy_k\in E(C)$ (resp.~$xy_k,zy_k,$ $xz\in E(C)$) for every $1\le k\le t$, and $z,y_1,\dots,y_t$ do not have any other neighbours in $C$. The subgraph $S=C[\{x,z,y_1,\dots, y_t\}]$ (resp.~$S^+=C[\{x,z,y_1,\dots, y_t\}]$) is an \emph{$x$-pendent spindle} (resp.~\emph{$x$-pendent spindle$^+$}).
\end{itemize}

\begin{lemma}\label{Blslem3}
Let $p\ge 2$, $s\ge 0$, and $\ell\ge 5$. Let $C$ be a connected $B_{\ell,s}$-free graph, and $v\in V(C)$ where $d_C(v)=\Delta(C)\ge \ell+s-1$. Let $C'$ be a graph that can be obtained from $C$ with any of the following operations.
\begin{enumerate}
\item[(i)] Delete an $x$-pendent edge $e=xy$, and add the edge $vy$.
\item[(ii)] Delete the three edges of an $x$-pendent triangle $T=C[\{x,y,y'\}]$, and add the edges $vy,vy'$.
\item[(iii)] Delete the five edges of an $x$-pendent diamond $D=C[\{x,z,y,y'\}]$, and add the edges $vz,vy,vy'$.
\item[(iv)] Delete the $2t$ edges of an $x$-pendent spindle $S=C[\{x,z,y_1,\dots, y_t\}]$ (for some $t\ge 2$), and add the edges $vz,vy_1,\dots.vy_t$.
\item[(v)] Delete the $2t+1$ edges of an $x$-pendent spindle$^+$ $S^+=C[\{x,z,y_1,\dots, y_t\}]$ (for some $t\ge 2$), and add the edges $vz,vy_1,\dots.vy_t$.
\end{enumerate}
Then $C'$ is also $B_{\ell,s}$-free, $d_{C'}(v)=\Delta(C')\ge \ell+s-1$, and $e_p(C)<e_p(C')$.
\end{lemma}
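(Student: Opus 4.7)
The plan is to verify the three claims—that $C'$ is $B_{\ell,s}$-free, that $d_{C'}(v) = \Delta(C') \geq \ell+s-1$, and that $e_p(C) < e_p(C')$—simultaneously for all five operations by exploiting their common structure. In each of (i)--(v) there is a set $W \subseteq V(C) \setminus \{v,x\}$ of vertices from the pendent substructure, namely $W = \{y\}$, $\{y,y'\}$, $\{z,y,y'\}$, $\{z,y_1,\dots,y_t\}$, $\{z,y_1,\dots,y_t\}$ respectively, such that the operation deletes every edge of $C$ incident to $W$ and adds a new edge $vw$ for each $w \in W$. The pendent definitions force every $C$-neighbour of $W$ to lie in $\{x\} \cup W$, so in particular no $w \in W$ is adjacent to $v$ in $C$; after the operation, every $w \in W$ becomes a leaf of $C'$ whose unique neighbour is $v$, while $x$ loses exactly $m'$ edges (where $m' = 1,2,2,t,t+1$ in the five cases) and all other degrees are unchanged.

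The degree claim is immediate from this setup: $d_{C'}(v) = d_C(v) + |W| > d_C(v) \geq \ell+s-1$, and the only other degrees that changed (those of $x$ and of $W$) strictly decreased, so $\Delta(C') = d_{C'}(v)$. For $e_p(C) < e_p(C')$, writing $m = |W|$ and $d_v = d_C(v)$, $d_x = d_C(x)$,
\[
e_p(C') - e_p(C) = \bigl[(d_v + m)^p - d_v^p\bigr] - \bigl[d_x^p - (d_x - m')^p\bigr] + \sum_{w \in W} \bigl(1 - d_C(w)^p\bigr).
\]
Since $d_v \geq d_x$ and $d_v \geq \ell+s-1 \geq 4$ (from $\ell \geq 5$), and in operations (iv)--(v) additionally $d_v \geq d_C(z) \geq t$, the first two bracketed terms grow sufficiently fast in $d_v$ by strict convexity of $t \mapsto t^p$ ($p \geq 2$) to dominate the loss in the last sum. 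The estimates are routine case-by-case: for $p = 2$ they reduce to short integer inequalities that can be verified directly (for example, operation (iv) with $d_v = d_x = d$ yields $e_p(C') - e_p(C) = 2d + t^2 - t + 2 > 0$), and the case $p \geq 3$ then follows from convexity.

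The main combinatorial obstacle is proving $C'$ is $B_{\ell,s}$-free, which I would handle by a uniform substitution argument. Suppose for contradiction that $B$ is a copy of $B_{\ell,s}$ in $C'$. Since $C$ itself is $B_{\ell,s}$-free and every added edge has the form $vw$ with $w \in W$, $B$ must use at least one added edge; hence $U := V(B) \cap W \neq \emptyset$ and $v \in V(B)$. Each $u \in U$ has $d_{C'}(u) = 1$, so is a leaf of $B$ whose unique $B$-neighbour is $v$; consequently every edge of $B$ not incident to $U$ avoids $W$ entirely, is not an added edge, and therefore lies in $E(C)$. From $v \notin N_C(v)$ and $U \cap N_C(v) = \emptyset$ we get $\{v\} \cup U \subseteq V(B) \setminus N_C(v)$, so
\[
|N_C(v) \cap V(B)| \leq |V(B)| - 1 - |U| = \ell + s - 1 - |U|,
\]
which combined with $|N_C(v)| \geq \ell+s-1$ yields $|N_C(v) \setminus V(B)| \geq |U|$. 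Choosing any injection $\phi \colon U \to N_C(v) \setminus V(B)$ and replacing each $u \in U$ in $B$ by $\phi(u)$ produces a subgraph $B' \subseteq C$ with edge set $E(B - U) \cup \{v\phi(u) : u \in U\}$; both pieces lie in $E(C)$, so $B' \cong B_{\ell,s}$ is a copy of $B_{\ell,s}$ in $C$, contradicting the hypothesis. This counting inequality is essentially tight, which is precisely why the hypothesis $d_C(v) \geq \ell+s-1$ appears in the statement.
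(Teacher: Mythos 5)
Your setup (the uniform set $W$ of re-attached vertices, the observation that all $C$-edges at $W$ are deleted and that $W$ becomes a set of leaves at $v$), your degree argument, and your $B_{\ell,s}$-freeness argument are all correct and essentially identical to the paper's: the paper also replaces the leaves $u_1,\dots,u_m\in W$ of the putative $B_{\ell,s}$ in $C'$ by unused $C$-neighbours of $v$, using exactly the count $|N_C(v)|\ge\ell+s-1$ against $|V(B_{\ell,s})|=\ell+s$. The general formula you write for $e_p(C')-e_p(C)$ is also correct.

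The gap is in how you dispose of that formula. The claim that the convex gain at $v$ minus the loss at $x$ ``dominates'' the leaf losses, verified only for operation (iv) at $p=2$, with ``the case $p\ge3$ then follows from convexity,'' is not a proof, and the first of these two appeals actually fails as stated. For the pendent diamond (iii) at $p=2$, the first-order convexity surplus is $p(m-m')d_C(v)^{p-1}=2d_C(v)^{p-1}$, i.e.\ $8$ when $d_C(v)=4$, while the leaf loss $\sum_{w\in W}(d_C(w)^p-1)=2^p+2\cdot3^p-3=19$; positivity only emerges after using the second-order binomial term together with the explicit bounds $d_C(v)\ge\ell+s-1\ge4$ and $d_C(x)\ge3$ (which you have not recorded for $x$). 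Likewise there is no valid principle by which the $p=2$ instance of such an inequality implies the $p\ge3$ instances ``by convexity'': both the gains and the losses grow with $p$, at different rates, and the paper's verification is a separate binomial-expansion estimate for each of (i)--(iii) and (v) (with (iv) reduced to (v) by first adding the edge $xz$), several of which are tight — e.g.\ the pendent-triangle case reduces to $4(4^{p-2}+3^{p-2})\ge 2^{p+1}$, which holds with equality at $p=2$. So the analytic half of the lemma still needs to be carried out case by case; as written it is asserted rather than proved.
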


\begin{proof}
Clearly we have $d_{C'}(v)=\Delta(C')\ge \ell+s-1$, since in the transformation from $C$ to $C'$, the only vertex whose degree has increased is $v$. 

Next, let $V_1$ be the set of neighbours of $v$ in $C$. Suppose that $C'$ contains a copy of $B_{\ell,s}$, and we are in case (iv) or (v). Then for some $u_1,\dots,u_m\in \{z,y_1,\dots,y_t\}$ where $1\le m\le t+1$, the edges $vu_1,\dots,vu_m$ must be used by the $B_{\ell,s}$, with $u_1,\dots,u_m$ being leaves. Note that  $|V_1\cup\{v,y_1,\dots,y_t,z\}|\ge \ell+s+t+1$, and this means that there are vertices $w_1,\dots,w_m\in V_1$ which are not used in the $B_{\ell,s}$. Thus we obtain a copy of $B_{\ell,s}$ in $C$ by replacing $vu_1,\dots,vu_m$ with $vw_1,\dots,vw_m$, a contradiction. Similar arguments hold if we are in the other three cases, in view of $|V_1\cup\{v,y\}|\ge \ell+s+1$; $|V_1\cup\{v,y,y'\}|\ge \ell+s+2$; $|V_1\cup\{v,z,y,y'\}|\ge \ell+s+3$ for (i), (ii), (iii), respectively. Therefore, $C'$ is $B_{\ell,s}$-free.

It remains to prove that  $e_p(C)<e_p(C')$ for each case.\\[1ex]
\indent (i) Going from $C$ to $C'$, we see that the degree of $v$ is increased by $1$, and the degree of $x$ is decreased by $1$. Since $d_C(v)\ge d_C(x)\ge 2$, we have
\begin{align*}
e_p(C')-e_p(C) &= (d_{C}(v)+1)^p-d_C(v)^p+(d_{C}(x)-1)^p-d_C(x)^p\\
&\ge\sum_{1\le j\le p\textup{, $j$ odd}}{p\choose j}(d_C(v)^{p-j}-d_C(x)^{p-j})+{p\choose 2}(d_C(v)^{p-2}+d_C(x)^{p-2})\\
&>0.
\end{align*}
\indent (ii) Going from $C$ to $C'$, we see that the degree of $v$ is increased by $2$, the degree of $x$ is decreased by $2$, and the degrees of $y,y'$ are each decreased from $2$ to $1$. Since $d_C(v)\ge d_C(x)\ge 3$ and $d_C(v)\ge \ell+s-1\ge 4$, we have
\begin{align*}
e_p(C')-e_p(C) &= (d_{C}(v)+2)^p-d_C(v)^p+(d_{C}(x)-2)^p-d_C(x)^p+2(1^p-2^p)\\
&\ge\sum_{1\le j\le p\textup{, $j$ odd}}{p\choose j}(d_C(v)^{p-j}-d_C(x)^{p-j})2^j\\
&\quad\quad\quad+{p\choose 2}(d_C(v)^{p-2}+d_C(x)^{p-2})2^2+2(1-2^p)\\
&>4(4^{p-2}+3^{p-2})-2^{p+1}\ge 0.
\end{align*}
\indent(iii) Going from $C$ to $C'$, we see that the degree of $v$ is increased by $3$, the degree of $x$ is decreased by $2$, the degree of $z$ is decreased from $2$ to $1$, and the degrees of $y,y'$ are each decreased from $3$ to $1$. Since 
$d_C(v)\ge d_C(x)\ge 3$ and $d_C(v)\ge \ell+s-1\ge 4$, we have
\begin{align*}
e_p(C')-e_p(C) &= (d_{C}(v)+3)^p-d_C(v)^p+(d_{C}(x)-2)^p-d_C(x)^p\\
&\quad\quad\quad+(1^p-2^p)+2(1^p-3^p)\\
&\ge p(3d_C(v)^{p-1}-2d_C(x)^{p-1})+{p\choose 2}(9d_C(v)^{p-2}+4d_C(x)^{p-2})\\
&\quad\quad\quad+\sum_{3\le j\le p\textup{, $j$ odd}}{p\choose j}(d_C(v)^{p-j}3^j-d_C(x)^{p-j}2^j)+3-2^p-2\cdot 3^p\\
&\ge 2\cdot 4^{p-1}+9\cdot 4^{p-2}+4\cdot 3^{p-2}+3-2^p-2\cdot 3^p>0,
\end{align*}
since $2\cdot 4^{p-1}+9\cdot 4^{p-2}+3>4^p+3>2\cdot 3^p$ and $4\cdot 3^{p-2}\ge 2^p$.\\[1ex]
\indent(v) Going from $C$ to $C'$, we see that the degree of $v$ is increased by $t+1$, the degree of $x$ is decreased by $t+1$, the degree of $z$ is decreased from $t+1$ to $1$, and the degrees of $y_1,\dots,y_t$ are each decreased from $2$ to $1$. Since $d_C(v)\ge d_C(x)\ge t+1$, we have
\begin{align*}
e_p(C')-e_p(C) &= (d_{C}(v)+t+1)^p-d_C(v)^p+(d_{C}(x)-t-1)^p-d_C(x)^p\\
&\quad\quad\quad+1^p-(t+1)^p+t(1^p-2^p)\\
&>\sum_{1\le j\le p\textup{, $j$ odd}}{p\choose j}(d_C(v)^{p-j}-d_C(x)^{p-j})(t+1)^j\\
&\quad\quad\quad+{p\choose 2}(d_C(v)^{p-2}+d_C(x)^{p-2})(t+1)^2-(t+1)^p-t\cdot 2^p\\
&\ge 2(t+1)^p-(t+1)^p-t\cdot 2^p=(t+1)^p-t\cdot 2^p>0.
\end{align*}
\indent(iv) This follows from (v), since we can obtain the graph $C''$ from $C$ by adding the edge $xz$, so that $e_p(C)<e_p(C'')<e_p(C')$.
\end{proof}

\begin{prop}\label{Blsprp}
%
Conjecture \ref{Blsconj} holds if the following statement is true: Let $p\ge 2$, $\ell\ge 6$, and $s\ge 0$. Then there exists $d=d(\ell,s)\ge\ell+s$ such that, for all connected $B_{\ell,s}$-free graph $C$ with $c\ge d=d(\ell,s)$ vertices and $\Delta(C)\ge d-1$, we have $e_p(C)\le e_p(H(c,\ell))$, with equality if and only if $C=H(c,\ell)$.

Similarly, Theorem \ref{B5sthm} holds if the following statement is true: Let $p\ge 2$ and $s\ge 0$. Then for all connected $B_{5,s}$-free graph $C$ with $c\ge s+5$ vertices and $\Delta(C)\ge s+4$, we have 
\begin{equation}\label{B5seq1}
e_p(C)\le
\left\{
\begin{array}{ll}
e_p(H(c,5)) & \textup{\emph{if $s=0$,}}\\
e_p(K_1+M_{c-1}) & \textup{\emph{if $s\ge 1$,}}
\end{array}
\right.
\end{equation}
with equality if and only if $C=H(c,5)$ for $s=0$, and $C=K_1+M_{c-1}$ for $s\ge 1$.
\end{prop}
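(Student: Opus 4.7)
Fix $p\ge 2$, $\ell\ge 6$, $s\ge 0$, and let $d=d(\ell,s)\ge\ell+s$ be a constant large enough that both the hypothesized statement and Lemma \ref{Blslem2}(b) apply with this $d$. Pick $n$ sufficiently large (in particular $n>(\ell+s+d)^2$) and choose an extremal $B_{\ell,s}$-free graph $G$ on $n$ vertices, so that
\[
e_p(G)=\ex_p(n,B_{\ell,s})\ge e_p(H(n,\ell));
\]
the goal is to deduce $G=H(n,\ell)$. Partition the components of $G$ into \emph{big} ones, satisfying $|V(C)|\ge d$ and $\Delta(C)\ge d-1$, and \emph{small} ones. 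Note that any small component $C$ has $\Delta(C)\le d-1$: either $|V(C)|<d$, so $\Delta(C)\le |V(C)|-1\le d-2$, or $\Delta(C)\le d-2$ directly. Write $C_1,\dots,C_m$ for the big components with $h_i=|V(C_i)|\ge d$, let $G^\ast$ be the union of the small components with $h^\ast=|V(G^\ast)|$, and set $h=\sum_{i=1}^m h_i=n-h^\ast$; then $\Delta(G^\ast)\le d-1\le d$.

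\textbf{Case analysis.} If $m=0$, then $\Delta(G)\le d-1$ forces $e_p(G)\le n(d-1)^p=O(n)$, contradicting $e_p(G)\ge e_p(H(n,\ell))=\Theta(n^p)$ for $p\ge 2$ and $n$ large. Hence $m\ge 1$. Each $C_i$ is a connected $B_{\ell,s}$-free graph with $h_i\ge d$ vertices and $\Delta(C_i)\ge d-1$, so the hypothesized statement gives $e_p(C_i)\le e_p(H(h_i,\ell))$, with equality if and only if $C_i=H(h_i,\ell)$. Iterating Lemma \ref{Blslem1}(b) (whose hypothesis $h_i\ge \ell$ holds since $h_i\ge d\ge\ell+s\ge\ell$) yields
\[
\sum_{i=1}^{m}e_p(C_i)\le \sum_{i=1}^{m}e_p(H(h_i,\ell))\le e_p(H(h,\ell)),
\]
with strict inequality in the second step whenever $m\ge 2$. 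If $h^\ast>0$, Lemma \ref{Blslem2}(b) gives $e_p(H(h,\ell))+e_p(G^\ast)<e_p(H(n,\ell))$, so
\[
e_p(G)=\sum_{i=1}^{m}e_p(C_i)+e_p(G^\ast)<e_p(H(n,\ell)),
\]
contradicting extremality; and if $h^\ast=0$ with $m\ge 2$, the earlier strict inequality already forces $e_p(G)<e_p(H(n,\ell))$, again a contradiction. Thus $m=1$ and $h^\ast=0$, so $G=C_1$ is connected on $n$ vertices with $\Delta(G)\ge d-1$. The hypothesized statement then yields $e_p(G)\le e_p(H(n,\ell))$; combined with the extremality, equality holds, and the equality clause forces $G=H(n,\ell)$.

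\textbf{Adaptation to Theorem \ref{B5sthm} and the main obstacle.} The argument for the second assertion is the same, with parts (a) of Lemmas \ref{Blslem1} and \ref{Blslem2} replacing the (b) parts, $H(c,\ell)$ replaced by $K_1+M_{c-1}$ when $s\ge 1$, and $d=s+5$; the case $s=0$ is literally the $\ell=5$ case of the first assertion. The entire content of the proposition is a structural \emph{reduction}: once one knows the extremal inequality for connected $B_{\ell,s}$-free graphs of high maximum degree, Lemmas \ref{Blslem1} and \ref{Blslem2} kill all multi-component configurations. The genuine difficulty therefore lies in proving the hypothesized one-component statement itself; this is what the proofs of Theorems \ref{B5sthm}--\ref{B7sthm} accomplish by a detailed local analysis using the pendent-edge/triangle/diamond/spindle operations of Lemma \ref{Blslem3}, and it is precisely this step that remains open for $\ell\ge 8$, leaving Conjecture \ref{Blsconj} unresolved.
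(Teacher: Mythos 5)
Your proposal is correct and follows essentially the same route as the paper: decompose $G$ into the components of maximum degree at least $d-1$ (each necessarily of order at least $d$) plus a low-maximum-degree remainder $G^\ast$, apply the hypothesized single-component bound to each such component, merge them via repeated use of Lemma \ref{Blslem1}, absorb $G^\ast$ via Lemma \ref{Blslem2}, and handle the no-big-component case by the trivial bound $e_p(G)\le n(d-1)^p=O(n)\ll e_p(H(n,\ell))$. The only differences are cosmetic (the paper phrases it as showing every $G\neq H(n,\ell)$ satisfies $e_p(G)<e_p(H(n,\ell))$ rather than arguing from an extremal $G$, and makes the threshold $n>(\ell+s+d)^2$ explicit in the $\Delta(G)\le d-2$ estimate), so there is nothing substantive to flag.
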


\begin{proof}
%
%
%
Suppose that the first assertion in Proposition \ref{Blsprp} holds. We prove that Conjecture \ref{Blsconj} holds for $n>(\ell+s+d)^2$. Clearly the graph $H(n,\ell)$ is $B_{\ell,s}$-free. Now, let $G$ be a $B_{\ell,s}$-free graph on $n$ vertices and $G\neq H(n,\ell)$. Then the assertion of Conjecture \ref{Blsconj} follows if we can prove that $e_p(G)<e_p(H(n,\ell))$.

Suppose first that $\Delta(G)\le d-2$. Then since $n-1>(d+1)^2$, we have 
\begin{align*}
(n-1)^p &= (n-1)(n-1)^{p-1} > (n-1)(d+1)^{2p-2}\\
& > (n-1)[d^{2p-2}+(2p-2)d^{2p-3}]\\
& > (n-1)d^{2p-2}+2d^{2p-1}> n(d-2)^p,
\end{align*}
so that 
\begin{equation}
e_p(G)\le n(d-2)^p<(n-1)^p<e_p(H(n,\ell)).\label{Blsprpeq1}
\end{equation}
 
Now, let $\Delta(G)\ge d-1$. Let $G^\ast\subset G$ be the subgraph consisting of the components with maximum degree at most $d-2$, so that $\Delta(G^\ast)\le d-2$. We have $G=C_1\cup\cdots\cup C_t\cup G^\ast$ for some $t\ge 1$, where $C_1,\dots,C_t$ are the components of $G$ with maximum degree at least $d-1$. Let $c_i=|V(C_i)|\ge d$. By the assertion in Proposition \ref{Blsprp}, for every $1\le i\le t$, we have 
\begin{equation}
e_p(C_i)\le e_p(H(c_i,\ell)),\label{Blsprpeq2}
\end{equation}
with equality if and only if $C_i=H(c_i,\ell)$. We apply (\ref{Blsprpeq2}) to every $C_i$, and then apply Lemma \ref{Blslem1}(b) repeatedly $t-1$ times, and finally Lemma \ref{Blslem2}(b), if $|V(G^\ast)|>0$. We find that $e_p(G)<e_p(H(n,\ell))$, since $G\neq H(n,\ell)$ by assumption.

By a similar argument, using Lemmas \ref{Blslem1}(a) and \ref{Blslem2}(a), and setting $\ell=5$, $d=s+5$, we see that the second assertion implies Theorem \ref{B5sthm} for $n>(2s+10)^2$. Note that the analogous inequality to (\ref{Blsprpeq1}) would be
\[
e_p(G)\le n(s+3)^p<(n-1)^p<e_p(H(n,5))<e_p(K_1+M_{n-1}).
\]
\end{proof}

We are now ready to prove Theorems \ref{B5sthm}, \ref{B6sthm} and \ref{B7sthm}. The arguments in all three proofs are similar. In outline, it suffices to verify the statements in Proposition \ref{Blsprp} for $\ell=5,6,7$. Let $C$ be a connected $B_{\ell,s}$-free graph on $c$ vertices as defined in the proposition. We may assume that $C$ does not contain any of the pendent subgraphs, otherwise we may apply Lemma \ref{Blslem3} to obtain another $B_{\ell,s}$-free graph $C'$ with $e_p(C)<e_p(C')$, so that we could consider the argument for $C'$ instead of $C$. Under this assumption, we then show that $e_p(C)\le e_p(K_1+M_{c-1})$ for $\ell=5$, $s\ge 1$, and $e_p(C)\le e_p(H(c,\ell))$ otherwise. In each case, equality occurs if and only if $C$ is the corresponding extremal graph.

\begin{proof}[Proof of Theorems \ref{B5sthm}]
It suffices to verify the second statement in Proposition \ref{Blsprp}. Let $C$ be a $B_{5,s}$-free connected graph with $c\ge s+5$ vertices, and $v\in V(C)$ with $d_C(v)=\Delta(C)\ge s+4$. By Lemma \ref{Blslem3}, we may assume that $C$ does not contain an $x$-pendent edge $xy$ where $x,y\in V(C-v)$. Otherwise, we may delete $xy$ and add $vy$ to obtain the $B_{5,s}$-free graph $C'$ with $e_p(C)<e_p(C')$ and $d_{C'}(v)=\Delta(C')$, and consider the graph $C'$ instead of $C$.

For $i\ge 1$, let $V_i$ be the set of vertices of $C$ at distance $i$ from $v$. Note that $|V_1|=d_C(v)\ge s+4$. Also, we have the following properties.
\begin{enumerate}
\item[(i)] $V_i=\emptyset$ for $i\ge 3$.
\item[(ii)] $C[V_2]$ does not contain an edge.
\item[(iii)] Every vertex of $V_2$ has exactly one neighbour in $V_1$.  
\item[(iv)] $C[V_1]$ contains at most one edge if $s=0$, and $C[V_1]$ does not contain a copy of the path $P_3$ if $s\ge 1$.
\end{enumerate} 
Otherwise, suppose that (i) is false. Then we have a copy of $B_{5,s}$, where the path $P_5$ in $B_{5,s}$ is $x_3x_2x_1vy_1$ with $x_i\in V_i$ for $i=1,2,3$, $y_1\in V_1$, and the remaining $s$ vertices of the $B_{5,s}$ are all in $V_1\setminus\{x_1,y_1\}$. Properties (ii) to (iv) also hold for similar reasons. If $V_2\neq\emptyset$, then we must have an edge $xy\in E(C)$ with $x\in V_1$ and $y\in V_2$. It follows from (i) to (iii) that $xy$ is an $x$-pendent edge. Therefore, we may assume that $V_2=\emptyset$. From (iv), we can now easily see that $C\subset H(c,5)$ if $s=0$; and $C\subset K_1+M_{c-1}$ if $s\ge 1$, since $C[V_1]$ consists of independent edges and isolated vertices. Consequently (\ref{B5seq1}) holds, as well as the cases of equality.
\end{proof}

\begin{proof}[Proof of Theorem \ref{B6sthm}]
It suffices to verify the first statement in Proposition \ref{Blsprp} for $\ell=6$, with $d=s+6$. Let $C$ be a connected graph with $c\ge s+6$ vertices, and $v\in V(C)$ with $d_C(v)=\Delta(C)\ge s+5$. We may assume that $C$ does not contain an $x$-pendent edge or an $x$-pendent triangle, where $x\in V(C-v)$. Otherwise in either case, we may obtain the $B_{6,s}$-free graph $C'$ as described in Lemma \ref{Blslem3} with $e_p(C)<e_p(C')$ and $d_{C'}(v)=\Delta(C')$, and consider the graph $C'$ instead of $C$.

For $i\ge 1$, let $V_i$ be the set of vertices of $C$ at distance $i$ from $v$. Note that $|V_1|=d_C(v)\ge s+5$. Also, we have the following properties.
\begin{enumerate}
\item[(i)] $V_i=\emptyset$ for $i\ge 4$.
\item[(ii)] $C[V_i]$ does not contain a copy of the path $P_{5-i}$, for $i=1,2,3$.
\item[(iii)] Every vertex of $V_3$ has exactly one neighbour in $V_2$.  
\end{enumerate} 
Otherwise if any of (i) to (iii) is false, then we can easily find a copy of $B_{6,s}$ with centre $v$. By (i) to (iii), we may assume that $V_3=\emptyset$, otherwise we have an $x$-pendent edge $xy\in E(C)$ where $x\in V_2$ and $y\in V_3$. Next, suppose that we have an edge $yy'\in C[V_2]$. If $y$ and $y'$ have distinct neighbours in $V_1$, then we can again easily find a copy of $B_{6,s}$ with centre $v$ in $C$. It follows from (ii) with $i=2$ that $y$ and $y'$ must each have exactly one neighbour in $V_1$, which is a common neighbour $x\in V_1$, and therefore $C[\{x,y,y'\}]$ is an $x$-pendent triangle. Thus, we may assume that $C[V_2]$ does not contain an edge. Since no $x$-pendent edge $xy$ exists where $x\in V_1$, $y\in V_2$, this means that every vertex of $V_2$ must have at least two neighbours in $V_1$. This implies that any two vertices $y,y'\in V_2$ cannot have a common neighbour in $V_1$, otherwise we can again easily find a copy of $B_{6,s}$ in $C$. Therefore, if $V_2\neq\emptyset$ with $V_2=\{y_1,\dots,y_q\}$ for some $q\ge 1$, then for $1\le k\le q$, if $\Gamma_k\subset V_1$ is the set of neighbours of $y_k$ in $V_1$, we have $|\Gamma_k|\ge 2$, and the sets $\Gamma_k$ must be disjoint. Let $X=V_1\setminus\bigcup_{k=1}^q\Gamma_k$. Note that if we have an edge $e$ in $C[V_1]$, then $e$ must either be within $X$, or $e$ connects the two vertices of some $\Gamma_k$ with $|\Gamma_k|=2$, otherwise we can again find a copy of $B_{6,s}$ in $C$. Together with (ii) with $i=1$, we see that $C[V_1\cup V_2]$ does not contain a copy of the path $P_4$ (whether $V_2\neq\emptyset$ or $V_2=\emptyset$). Therefore, we can easily deduce that $C[V_1\cup V_2]$ is a subgraph whose components are stars and triangles.

Let $C^\ast$ be the graph obtained from $C$ by adding all edges from $v$ to $V_2$. Note that by replacing $C^\ast-v$ with the star of the same order, we obtain the graph $H(c,6)$. We shall show that this operation does not decrease the value of $e_p$. Consider the following operations.\\[1ex]
\indent(A) Suppose that $C^\ast-v$ contains two star components, say with centres $x$ and $y$, and the leaves at $y$ are $y_1,\dots,y_m$ for some $m\ge 0$. We delete the edges $yy_1,\dots,yy_m$ and add the edges $xy,xy_1,\dots,xy_m$. The increase in the value of $e_p$ is
\[
(d_{C^\ast}(x)+m+1)^p-d_{C^\ast}(x)^p+2^p-(m+1)^p>2^p>0.
\]

(B) Suppose that $C^\ast-v$ contains at least two triangle components, say with vertices $x_1,\dots,x_{3m}$ for some $m\ge 2$. We delete the edges of the triangles, and connect $x_1$ to $x_2,\dots,x_{3m}$. The increase in the value of $e_p$ is
\[
(3m)^p-3^p+(3m-1)(2^p-3^p)=(m^p-3m)3^p+(3m-1)2^p>0.
\]

(C) Suppose that $C^\ast-v$ contains a star and a triangle component, exactly one of each. Let $x$ be the centre of the star, and note that since $|V(C^\ast-v)|=c-1\ge s+5\ge 5$, we have $d_{C^\ast}(x)\ge 2$. We delete the edges of the triangle and connect $x$ to its three vertices. The increase in the value of $e_p$ is
\[
(d_{C^\ast}(x)+3)^p-d_{C^\ast}(x)^p+3(2^p-3^p).
\]
If $p=2$, then the increase is $6d_{C^\ast}(x)-6>0$. If $p\ge 3$ and $d_{C^\ast}(x)=2$, then the increase is $5^p+2^{p+1}-3^{p+1}>0$. Otherwise if $p\ge 3$ and $d_{C^\ast}(x)\ge 3$, then the increase is at least
\[
3pd_{C^\ast}(x)^{p-1}+3(2^p-3^p)\ge 3^{p+1}+3(2^p-3^p)>0.
\]

Therefore where possible, we apply operation (B), followed by successive applications of operation (A), and finally operation (C). We obtain $e_p(C)\le e_p(C^\ast)\le e_p(H(c,6))$. Equality occurs if and only if $C=C^\ast$ and $C^\ast-v$ is itself a star. That is, if and only if $C=H(c,6)$. 
\end{proof}

\begin{proof}[Proof of Theorem \ref{B7sthm}]
It suffices to verify the first statement in Proposition \ref{Blsprp} for $\ell=7$, with $d=2s+24$. Let $C$ be a connected graph with $c\ge 2s+24$ vertices, and $v\in V(C)$ with $d_C(v)=\Delta(C)\ge 2s+23$. We may assume that $C$ does not contain an $x$-pendent edge, triangle, diamond, spindle, or spindle$^+$, where $x\in V(C-v)$. Otherwise, we may obtain the $B_{7,s}$-free graph $C'$ as described in Lemma \ref{Blslem3} with $e_p(C)<e_p(C')$ and $d_{C'}(v)=\Delta(C')$, and consider the graph $C'$ instead of $C$.

For $i\ge 1$, let $V_i$ be the set of vertices of $C$ at distance $i$ from $v$. Note that $|V_1|=d_C(v)\ge 2s+23$. Also, we have the following properties.
\begin{enumerate}
\item[(i)] $V_i=\emptyset$ for $i\ge 5$.
\item[(ii)] $C[V_i]$ does not contain a copy of the path $P_{6-i}$, for $i=1,2,3,4$.
\item[(iii)] Every vertex of $V_4$ has exactly one neighbour in $V_3$. 
\end{enumerate} 
Otherwise if any of (i) to (iii) is false, then we can easily find a copy of $B_{7,s}$ with centre $v$. Proceeding exactly the same way as we did in Theorem \ref{B6sthm}, by avoiding a copy of $B_{7,s}$, or an $x$-pendent edge or triangle for some $x\in V(C-v)$, we can obtain the following facts.
\begin{itemize}
\item We may assume that $V_4=\emptyset$.
\item We may assume that $C[V_3]$ does not contain an edge, and that every vertex of $V_3$ has at least two neighbours in $V_2$. If $V_3\neq\emptyset$, with $V_3=\{y_1,\dots,y_q\}$ for some $q\ge 1$, and $\Gamma_k\subset V_2$ is the set of neighbours of $y_k$ in $V_2$, we have $|\Gamma_k|\ge 2$, and the sets $\Gamma_k$ must be disjoint. For $X=V_2\setminus\bigcup_{k=1}^q\Gamma_k$, if we have an edge $e$ in $C[V_2]$, then $e$ must either be within $X$, or $e$ connects the two vertices of some $\Gamma_k$ with $|\Gamma_k|=2$.
\end{itemize}

Now for any $\Gamma_k$, any two vertices $y,y'\in \Gamma_k$ cannot have two distinct neighbours in $V_1$, otherwise we can find a copy of $B_{7,s}$. Thus, the vertices of $\Gamma_k$ must have one common neighbour $x_k\in V_1$, so that $C[\Gamma_k\cup \{x_k,y_k\}]$ is either an $x_k$-pendent diamond or an $x_k$-pendent spindle. Therefore, we may further assume that $V_3=\emptyset$.

By (ii) with $i=2$, we see that the components of $C[V_2]$ are stars and triangles. Suppose that we have a star component in $C[V_2]$ with centre $z$ and leaves $y_1,\dots,y_t$, for some $t\ge 2$. Then no two of the $y_k$ can have distinct neighbours in $V_1$, otherwise we can find a copy of $B_{7,s}$. Thus, the vertices $y_k$ must have one common neighbour $x\in V_1$. If $z$ has a neighbour $x'\in V_1\setminus\{x\}$, then we have a copy of $B_{7,s}$ with centre $v$, where the $P_7$ is $y_1xy_2zx'vx''$ for some $x''\in V_1\setminus\{x,x'\}$, and the $s$ leaves are in $V_1\setminus\{x,x',x''\}$. Therefore, $x$ must be the unique neighbour of $z$ in $V_1$, and $C[\{x,z,y_1,\dots,y_t\}]$ is an $x$-pendent spindle$^+$.

Thus, we may assume that the components of $C[V_2]$ are triangles, and single edges and isolated vertices. We consider the behaviour of the edges that connect these components to $V_1$, keeping in mind that we should avoid creating a copy of  $B_{7,s}$. 
\begin{itemize}
\item If $y_1,y_2,y_3\in V_2$ form a triangle in $C[V_2]$, then $y_1,y_2,y_3$ must have a unique common neighbour in $V_1$, and they do not have any other neighbours in $V_1$.
\item Let $y_1y_2$ be a single edge in $C[V_2]$. If $y_1,y_2$ have exactly one common neighbour $x\in V_1$, then exactly one of $y_1,y_2$ has at least one  neighbour in $V_1\setminus\{x\}$, otherwise $C[\{x,y_1,y_2\}]$ is an $x$-pendent triangle or there is a copy of $B_{7,s}$. If $y_1,y_2$ have exactly two common neighbours $x_1,x_2\in V_1$, then both $y_1,y_2$ cannot have a neighbour in $V_1\setminus\{x_1,x_2\}$. Also, $y_1,y_2$ cannot have at least three common neighbours in $V_1$. The remaining possibility is that  $y_1, y_2$ have no common neighbour in $V_1$.
\item If $y$ is an isolated vertex in $C[V_2]$, then $y$ must have at least two neighbours in $V_1$, otherwise there is an $x$-pendent edge $xy$, for some $x\in V_1$.
\end{itemize}

Let $\tilde{C}=(C-v)-E(C[V_1])$, i.e., $\tilde{C}$ is the subgraph on $V_1\cup V_2$, with the edges of $C[V_1]$ deleted. Then, when considering the components of $\tilde{C}$, these components are the subgraphs as shown in Figure 2(a). We refer the subgraphs  illustrated as \emph{Type 1} to \emph{Type 5}.\\[1ex]
\[ \unit = 1cm
\thnline 
\dl{0.2}{0}{-5.7}{0}\dl{0.2}{1}{-5.7}{1}
\bez{-5.7}{0}{-6.2}{0}{-6.2}{0.5}\bez{-5.7}{1}{-6.2}{1}{-6.2}{0.5}
\bez{0.2}{0}{0.7}{0}{0.7}{0.5}\bez{0.2}{1}{0.7}{1}{0.7}{0.5}
\dl{0.2}{1.5}{-5.7}{1.5}\dl{0.2}{2.5}{-5.7}{2.5}
\bez{-5.7}{1.5}{-6.2}{1.5}{-6.2}{2}\bez{-5.7}{2.5}{-6.2}{2.5}{-6.2}{2}
\bez{0.2}{1.5}{0.7}{1.5}{0.7}{2}\bez{0.2}{2.5}{0.7}{2.5}{0.7}{2}
\pt{-0.2}{0.5}
\ellipse{-0.2}{1.85}{0.4}{0.15}
\point{-0.45}{2.15}{{\footnotesize$\ge 2$}}
\dl{-0.2}{0.5}{-0.6}{1.84}\dl{-0.2}{0.5}{0.2}{1.84}
\pt{-1.4}{0.5}\pt{-2}{0.5}
\ellipse{-1.3}{1.85}{0.3}{0.15}\ellipse{-2.1}{1.85}{0.3}{0.15}
\point{-2.35}{2.15}{{\footnotesize$\ge 1$}}\point{-1.55}{2.15}{{\footnotesize$\ge 1$}}
\dl{-1.4}{0.5}{-2}{0.5}
\dl{-1.4}{0.5}{-1.6}{1.84}\dl{-1.4}{0.5}{-1}{1.84}\dl{-2}{0.5}{-2.4}{1.84}\dl{-2}{0.5}{-1.8}{1.84}
\pt{-2.8}{0.5}\pt{-3.3}{0.5}
\pt{-2.8}{1.85}\pt{-3.3}{1.85}
\dl{-2.8}{0.5}{-3.3}{0.5}\dl{-2.8}{0.5}{-2.8}{1.85}\dl{-2.8}{0.5}{-3.3}{1.85}\dl{-3.3}{0.5}{-2.8}{1.85}\dl{-3.3}{0.5}{-3.3}{1.85}
\pt{-4}{0.5}\pt{-4.5}{0.5}
\ellipse{-4}{1.85}{0.3}{0.15}\pt{-4.5}{1.85}
\point{-4.25}{2.15}{{\footnotesize$\ge 1$}}
\dl{-4}{0.5}{-4.5}{0.5}\dl{-4}{0.5}{-4.5}{1.85}\dl{-4.5}{0.5}{-4.5}{1.85}
\dl{-4}{0.5}{-4.3}{1.84}\dl{-4}{0.5}{-3.7}{1.84}
\pt{-4.9}{0.4}\pt{-5.7}{0.4}\pt{-5.3}{0.6}
\pt{-5.3}{1.85}
\dl{-4.9}{0.4}{-5.7}{0.4}\dl{-4.9}{0.4}{-5.3}{0.6}\dl{-5.7}{0.4}{-5.3}{0.6}\dl{-4.9}{0.4}{-5.3}{1.85}\dl{-5.7}{0.4}{-5.3}{1.85}\dl{-5.3}{0.6}{-5.3}{1.85}
\point{-6.59}{1.88}{{\footnotesize$V_1$}}\point{-7.2}{2.28}{{\footnotesize (a)}}
\point{-6.59}{0.38}{{\footnotesize$V_2$}}\point{-6.59}{-0.4}{{\footnotesize Type:}}\point{-5.36}{-0.4}{{\footnotesize$1$}}\point{-4.17}{-0.4}{{\footnotesize$2$}}
\point{-3.12}{-0.4}{{\footnotesize$3$}}\point{-1.77}{-0.4}{{\footnotesize$4$}}\point{-0.27}{-0.4}{{\footnotesize$5$}}
\thnline 
\dl{2.7}{0}{6.7}{0}\dl{2.7}{1}{6.7}{1}
\bez{2.7}{0}{2.2}{0}{2.2}{0.5}\bez{2.7}{1}{2.2}{1}{2.2}{0.5}
\bez{6.7}{0}{7.2}{0}{7.2}{0.5}\bez{6.7}{1}{7.2}{1}{7.2}{0.5}
\dl{2.7}{1.5}{6.7}{1.5}\dl{2.7}{2.5}{6.7}{2.5}
\bez{2.7}{1.5}{2.2}{1.5}{2.2}{2}\bez{2.7}{2.5}{2.2}{2.5}{2.2}{2}
\bez{6.7}{1.5}{7.2}{1.5}{7.2}{2}\bez{6.7}{2.5}{7.2}{2.5}{7.2}{2}
\pt{2.7}{0.4}\pt{3.5}{0.4}\pt{3.1}{0.6}
\pt{4.3}{0.4}\pt{5.1}{0.4}\pt{4.7}{0.6}
\point{3.68}{0.41}{$\cdots$}
\pt{3.9}{2}
\dl{2.7}{0.4}{3.9}{2}\dl{3.5}{0.4}{3.9}{2}\dl{3.1}{0.6}{3.9}{2}\dl{4.3}{0.4}{3.9}{2}\dl{5.1}{0.4}{3.9}{2}\dl{4.7}{0.6}{3.9}{2}
\dl{2.7}{0.4}{3.5}{0.4}\dl{3.5}{0.4}{3.1}{0.6}\dl{3.1}{0.6}{2.7}{0.4}
\dl{4.3}{0.4}{5.1}{0.4}\dl{5.1}{0.4}{4.7}{0.6}\dl{4.7}{0.6}{4.3}{0.4}
\pt{5.5}{0.5}\pt{5.9}{0.5}\pt{6.7}{0.5}
\point{6.08}{0.41}{$\cdots$}
\pt{5.9}{2}\pt{6.3}{2}
\dl{5.5}{0.5}{5.9}{2}\dl{5.9}{0.5}{5.9}{2}\dl{6.7}{0.5}{5.9}{2}\dl{6.3}{2}{5.5}{0.5}\dl{6.3}{2}{5.9}{0.5}\dl{6.3}{2}{6.7}{0.5}
\point{1.81}{0.38}{{\footnotesize$V_2$}}\point{1.2}{2.28}{{\footnotesize (b)}}
\point{1.81}{1.88}{{\footnotesize$V_1$}}
\ptlu{0}{-1.4}{\textup{\normalsize Figure 2. (a) Types 1 to 5 subgraphs; (b) How Type 1 and Type 5 subgraphs intersect}}
\]\\[0.7ex]
\indent For two such subgraphs $S,S'$, we see that, in order to avoid creating a $B_{7,s}$ with centre $v$, we would need to have $V(S)\cap V(S')\cap V_1=\emptyset$ in most cases. The only exceptions are when $S$ and $S'$ are of Type 1 and meeting at one vertex in $V_1$, or they are of Type 5 with order $3$ and meeting at exactly two vertices in $V_1$. Indeed, we may have at least two subgraphs meeting in $V_1$ in these two exceptional cases, as shown in Figure 2(b). We next eliminate these two possibilities.

Suppose first that we have $x\in V_1$, with exactly $m\ge 2$ Type 1 subgraphs meeting at $x$. Let $y_1,\dots,y_{3m}\in V_2$ be the vertices of the triangles in $V_2$. We delete the $4m$ edges of the Type 1 subgraphs, and add the edges $vy_1,\dots,vy_{3m}$. Then the degree of $v$ is increased by $3m$, the degree of $x$ is decreased by $3m$, and the degrees of the $y_k$ are decreased from $3$ to $1$. Since $d_C(v)\ge d_C(x)>3m$, the increase in the value of $e_p$ is
\begin{align*}
& (d_C(v)+3m)^p-d_C(v)^p+(d_C(x)-3m)^p-d_C(x)^p+3m(1^p-3^p)\\
>\:\:& \sum_{1\le j\le p\textup{, $j$ odd}}{p\choose j}(d_C(v)^{p-j}-d_C(x)^{p-j})(3m)^j\\
&\quad\quad\quad+{p\choose 2}(d_C(v)^{p-2}+d_C(x)^{p-2})(3m)^2-3m\cdot 3^p\\
>\:\:& 18m^2(3m)^{p-2}-m\cdot 3^{p+1}=2m^p\cdot 3^p-m\cdot 3^{p+1}\ge 4m\cdot 3^p-m\cdot 3^{p+1}>0.
\end{align*}

Next, suppose that we have $x_1,x_2\in V_1$, with exactly $m\ge 2$ Type 5 subgraphs of order $3$ meeting at  $x_1,x_2$. Let $y_1,\dots,y_m\in V_2$ be the vertices of these subgraphs in $V_2$. Note that the neighbours of $x_1$ (resp.~$x_2$) are precisely $v$ and the $y_k$, and possibly $x_2$ (resp.~$x_1$), otherwise there is a copy of $B_{7,s}$. Thus $d_C(x_1),d_C(x_2)\in \{m+1, m+2\}$. Suppose first that $m\le s+1$. Note that $d_C(v)\ge 2s+23>2(m+2)\ge d_C(x_1)+d_C(x_2)$. We delete the $2m$ edges of the Type 5 subgraphs, and add the edges $vy_1,\dots,vy_m$. Then the degree of $v$ is increased by $m$, the degrees of $x_1,x_2$ are decreased by $m$, and the degrees of the $y_k$ are decreased from $2$ to $1$. The increase in the value of $e_p$ is
\begin{align*}
& (d_C(v)+m)^p-d_C(v)^p+(d_C(x_1)-m)^p-d_C(x_1)^p+(d_C(x_2)-m)^p\\
&\quad\quad\quad -d_C(x_2)^p+m(1^p-2^p)\\
>\:\:& \sum_{1\le j\le p\textup{, $j$ odd}}{p\choose j}(d_C(v)^{p-j}-d_C(x_1)^{p-j}-d_C(x_2)^{p-j})m^j\\
&\quad\quad\quad +{p\choose 2}(d_C(v)^{p-2}+d_C(x_1)^{p-2}+d_C(x_2)^{p-2})m^2-m\cdot 2^p\\
>\:\:& ((2m)^{p-2}+2m^{p-2})m^2-m\cdot 2^p=(2^{p-2}+2)m^p-m\cdot 2^p> 0.
\end{align*}

Secondly, let $m\ge s+2$. Suppose that there is an edge $x'y'$ where either $x'\in V_1\setminus\{x_1,x_2\}$ and $y'\in (V_1\cup V_2)\setminus\{x_1,x_2,y_1,\dots,y_m\}$, or $x'=x_2$ and $y'\in V_1\setminus\{x_1,x_2\}$. Then there is a copy of $B_{7,s}$ with centre $x_1$, where the $P_7$ is $y'x'vx_2y_2x_1y_1$ or $wvy'x_2y_2x_1y_1$ for some $w\in V_1\setminus\{x_1,x_2,y'\}$, and the $s$ leaves are $y_3,\dots,y_{s+2}$. Similarly, we cannot have an edge $x_1y'$ for every $y'\in V_1\setminus\{x_1,x_2\}$. It follows that all the edges of $C$ are those connecting $v$ to $V_1$, and all edges between $\{x_1,x_2\}$ and $\{y_1,\dots,y_m\}$, and possibly $x_1x_2$. Now, let $C'$ be the graph obtained by deleting the edges $x_2y_1,\dots,x_2y_m$ and adding the edges $vy_1,\dots,vy_m$. Since $d_C(v)\ge d_C(x_2)$, the increase in the value of $e_p$ is
\begin{align*}
& (d_C(v)+m)^p-d_C(v)^p+(d_C(x_2)-m)^p-d_C(x_2)^p\\
\ge\:\:& \sum_{1\le j\le p\textup{, $j$ odd}}{p\choose j}(d_C(v)^{p-j}-d_C(x_2)^{p-j})m^j+{p\choose 2}(d_C(v)^{p-2}+d_C(x_2)^{p-2})m^2> 0.
\end{align*}
Moreover, we see that the degree sequence of $C'$ is majorised by the degree sequence of $K_2+E_{c-2}$, by identifying $\{v,x_1\}$ with $K_2$, and the remaining vertices of $C'$ with $E_{c-2}$. It follows that $e_p(C)<e_p(C')\le e_p(K_2+E_{c-2})<e_p(H(c,7))$.

Therefore, we may assume that no two of the subgraphs as shown in Figure 2(a) meet in $V_1$. For such a subgraph $S$, let $[S]$ denote the component of $C-v$ containing $S$. We consider the structure of $[S]$, so as to avoid a copy of $B_{7,s}$. Clearly if $S$ is of Type 1, then $[S]=S$. If $S$ is of Type 2, 3 or 4, then either $[S]=S$, or $|V(S)\cap V_1|=2$, and the edge connecting the two vertices of $V(S)\cap V_1$ is in $[S]$. Finally, let $S$ be of Type 5, with $V(S)\cap V_1=\{x_1,\dots,x_t\}$ for some $t\ge 2$. It is easy to check that any additional vertices and edges in $[S]$ are as follows. If $t=2$, then we may possibly have the edge $x_1x_2$, and either $x_1$ and $x_2$ are connected to another vertex of $V_1\setminus\{x_1,x_2\}$, or only one of $x_1,x_2$ is connected to some other vertices of $V_1\setminus\{x_1,x_2\}$. If $t=3$, then we may possibly have any number of the edges $x_1x_2, x_1x_3,x_2x_3$, or none of these three edges and only one of $x_1,x_2,x_3$ is connected to some other vertices of $V_1\setminus\{x_1,x_2,x_3\}$. If $t\ge 4$, then we may either have exactly one edge in $\{x_1,\dots,x_t\}$, or no edge in $\{x_1,\dots,x_t\}$ and only one of $x_1,\dots,x_t$ is connected to some other vertices of $V_1\setminus\{x_1,\dots,x_t\}$.

We see that all such components $[S]$ can be classified into exactly one of three types:
\begin{enumerate}
\item[(I)] A subgraph of $K_4$.
\item[(II)] A $H(c',5)$ for some $5\le c'\le c-1$ (i.e., a star on $c'$ vertices with an edge connecting two leaves).
\item[(III)] A double star with at least five vertices (i.e., two disjoint stars with an edge connecting their centres). A star itself is a special case of a double star.
\end{enumerate}
Moreover, by (ii) with $i=1$, we see that if $Y=V_1\setminus\bigcup V([S])$, where the union is taken over all such subgraphs $S$ in Figure 2(a), then $C[Y]$ is $P_5$-free. It is easy to show that the components of $C[Y]$ must also be one of the types (I), (II) or (III).
Consequently, if we connect $v$ to all vertices of $V_2$ to obtain the graph $C^\ast$, then the components of $C^\ast-v$ are of the types (I), (II) or (III). Note that by replacing $C^\ast-v$ with the graph $H(c-1,5)$, we obtain the graph $H(c,7)$. We shall show that this operation does not decrease the value of $e_p$. Consider the following operations.\\[1ex]
\indent (A) Suppose that $C^\ast-v$ has a double star component with at least five vertices, which is not a star. Let the centres be $x,y$, and the leaves at $y$ be $y_1,\dots,y_m$, for some $m\ge 1$. We may assume that $d_{C^\ast}(x)\ge d_{C^\ast}(y)=m+2$. We obtain the star with the same order by deleting the edges $yy_1,\dots,yy_m$, and adding the edges $xy_1,\dots,xy_m$. If $m\ge 2$, then the increase in the value of $e_p$ is
\begin{align*}
(d_{C^\ast}(x)+m)^p-d_{C^\ast}(x)^p+2^p-(m+2)^p &> pd_{C^\ast}(x)^{p-1}m-(m+2)^p\\
&\ge 2(m+2)^{p-1}m-(m+2)^p\ge 0.
\end{align*}
If $m=1$, then we have $d_{C^\ast}(x)\ge 4$. In this case, the increase in the value of $e_p$ is
\[
(d_{C^\ast}(x)+1)^p-d_{C^\ast}(x)^p+2^p-3^p > pd_{C^\ast}(x)^{p-1}+2^p-3^p\ge 2\cdot 4^{p-1}+2^p-3^p>0.
\]
\indent (B) Suppose that we have two components $C_1,C_2\subset C^\ast-v$ with $c_1$ and $c_2$ vertices, where $c_1\ge c_2\ge 5$, and $C_1$ (resp.~$C_2$) is either a star or the graph $H(c_1,5)$ (resp.~$H(c_2,5)$). If $C_1$ is a star, we add an edge to create $H(c_1,5)$, and likewise for $C_2$, so that we have the graphs $H(c_1,5)$ and $H(c_2,5)$. We then delete all edges of the $H(c_2,5)$, and connect all of its vertices to the universal vertex of the $H(c_1,5)$, thus obtaining the subgraph $H(c_1+c_2,5)$. The increase in the value of $e_p$ is at least
\[
(c_1+c_2)^p-c_1^p+2^p-c_2^p+2(2^p-3^p)>pc_1^{p-1}c_2-2\cdot 3^p> 0.
\]

Let $R$ be the subgraph of $C^\ast-v$ consisting of the components which are subgraphs of $K_4$. We have $d_{C^\ast}(y)\le 4$ for all $y\in V(R)$. Let $|V(R)|=r$.\\[1ex]
\indent (C) Suppose $r\ge 16$. We replace $R$ with the star of order $r$, with centre $x\in V(R)$. The increase in the value of $e_p$ is
\[
r^p-d_{C^\ast}(x)^p+\sum_{y\in V(R-x)}(2^p-d_{C^\ast}(y)^p) > r^p-r\cdot 4^p\ge r(16^{p-1}-4^p)\ge 0.
\]

(D) Suppose that $1\le r\le 15$, and the subgraph $C^\ast-(\{v\}\cup V(R))$ is $H(c_1,5)$. Recall that $|V(C^\ast-v)|=c-1\ge 2s+23\ge 23$, and thus $c_1\ge 8$. We delete all edges of $R$, and connect all vertices of $R$ to the universal vertex of the $H(c_1,5)$, to form a copy of $H(c-1,5)$. Since $c_1+r=c-1$, the increase in the value of $e_p$ is
\begin{align*}
(c-1)^p-c_1^p+\sum_{y\in V(R)}(2^p-d_{C^\ast}(y)^p) &\ge (c_1+r)^p-c_1^p+r(2^p-4^p)\\
&> pc_1^{p-1}r-r\cdot 4^p\ge r(2\cdot 8^{p-1}-4^p)\ge 0.
\end{align*}

Therefore where possible, we apply operation (C), then apply operation (A) to all double stars in $C^\ast[V_1]$, followed by successive applications of operation (B), and finally operation (D). We obtain $e_p(C) \le e_p(C^\ast) \le e_p(H(c,7))$. Equality occurs if and only if $C = C^\ast$ and $C^\ast-v$ is the graph $H(c-1,5)$. That is, if and only if $C = H(c,7)$.

The proof of Theorem \ref{B7sthm} is now complete.
\end{proof}

\section*{Acknowledgements}
Yongxin Lan, Zhongmei Qin, and Yongtang Shi are partially supported by National Natural Science Foundation of China (Nos.~11371021, 11771221), and Natural Science Foundation of Tianjin (No.~17JCQNJC00300). Henry Liu is partially supported by the Startup Fund of One Hundred Talent Program of SYSU. Henry Liu would also like to thank the Chern Institute of Mathematics, Nankai University, for their generous hospitality. He was able to carry out part of this research during his visit there.

\end{document}